\setlist[itemize]{label=\textbullet}
\theoremstyle{definition}
\newtheorem{definition}{Definition}
\theoremstyle{definition}
\theoremstyle{remark}
\theoremstyle{remark}
\newtheorem{Remark}{Remark}
\theoremstyle{remark}
\newtheorem*{remark}{Remark}
\theoremstyle{remark}
\theoremstyle{remark}
\theoremstyle{remark}
\theoremstyle{plain}
\newtheorem{Theorem}{Theorem}
\theoremstyle{plain}
\newcommand{\thistheoremname}{}
\newtheorem*{genericthm}{\thistheoremname}
\newenvironment{namedthm}[1]
  {\renewcommand{\thistheoremname}{#1}%
   \begin{genericthm}}
  {\end{genericthm}}
\theoremstyle{plain}
\theoremstyle{plain}
\newtheorem{Corollary}[Theorem]{Corollary}
\theoremstyle{plain}
\newtheorem{Lemma}[Theorem]{Lemma}
\theoremstyle{plain}
\theoremstyle{plain}
\newtheorem{Proposition}{Proposition}
\theoremstyle{plain}
\newcommand*{\N}{\mathbb{N}}
\newcommand*{\Z}{\mathbb{Z}}
\newcommand*{\Q}{\mathbb{Q}}
\newcommand*{\R}{\mathbb{R}}
\newcommand*{\Cc}{\mathbb{C}}
\newcommand*{\h}{\mathbb{H}}
\newcommand*{\F}{\mathbb{F}}
\newcommand*{\A}{\mathbb{A}}
\newcommand*{\Gg}{\mathbb{G}}
\DeclareMathOperator{\Hom}{Hom}
\DeclareMathOperator{\GL}{GL}
\DeclareMathOperator{\GU}{GU}
\DeclareMathOperator{\U}{U}
\DeclareMathOperator{\SU}{SU}
\DeclareMathOperator{\SL}{SL}
\title{A $\Lambda$-adic Kudla lift}
\author{Francesco Maria Iudica}
\date{}
\address{Normandie Univ, UNICAEN, CNRS, LMNO, 14000 Caen, France}
\email{francesco-maria.iudica@unicaen.fr}
\begin{document}
\maketitle
\begin{abstract}
\noindent The Kudla lift studied in this article is a classical version for Picard modular forms of the automorphic theta lift between $\GU(2)$ and $\GU(3)$. We construct an explicit $p$-adic analytic family of Picard modular forms varying with respect to the weight and level, which interpolates a so-called $p$-modification of the lift at arithmetic weights, by exploiting a formula of Finis for the Fourier-Jacobi coefficients of a lifted form.
\end{abstract}
{\let\clearpage\relax \tableofcontents} 
\thispagestyle{empty}
\setcounter{section}{-1}
\section{Introduction}
In his seminal paper \cite{kudlaq}, Kudla defined a lift $\mathcal{L}$ from elliptic modular forms to holomorphic modular forms for unitary groups of signature $(m,1)$. He proved that this lifting is in fact arithmetic, in the sense that if $f$ is a modular form on the upper half-plane with respect to a congruence subgroup of $\SL_2(\Z)$ and whose Fourier coefficients are in $\Q^{\text{ab}}$, then there exists a complex period $\Omega$ independent of $f$ such that $\Omega^{-1}\mathcal{L}(f)$ is arithmetic in the sense of Shimura \cite[p. 580]{shimura}. Other authors, such as Gelbart-Rogawski \cite{gelbart}, and Murase-Sugano \cite{murase}, have studied the theta lift for the dual reductive pair
\[
(\GU(1,1), \GU(2,1))
\] 
from an automorphic point of view.
This theta lift is an extension of the Kudla lift for $m=2$ to the space of automorphic forms for $\GU(1,1)$.


In his unpublished Ph.D. thesis \cite{finis}, Finis  further studied the arithmeticity of the classical level $1$ Kudla lift
\begin{align*}
\mathcal{L}_k:M_{k-1}(\Gamma_1(D),\omega_{K/\Q})&\to\mathscr{M}(k, L, \chi)\\
f&\mapsto \mathcal{L}_k(f)
\end{align*}
for $k\geq 6$ an even integer, obtaining in particular an explicit formula for the Fourier-Jacobi coefficients of the lifted form $\mathcal{L}_k(f)$ in terms of $f$. 
In this article we adress a problem first proposed by Finis in his thesis, \cite[p. 6]{finis}: for $p\geq 5$ a prime which splits in a fixed imaginary quadratic number field $K/\Q$, we construct $p$-adic analytic families of Picard modular forms varying in weight and level, which interpolate the classical Kudla lifts of Hida families of elliptic cusp forms with Nebentypus. In order to do so, we first recall the definition of the level $1$ Kudla lift.
As we shall see, the Fourier-Jacobi coefficients obtained by Finis, which are so-called generalized theta functions in the terminology of Shimura \cite{shimura}, do not vary $p$-adically. This leads to our first result, namely the construction of a suitable modification of the above lift, which we call the $p$-modified lift:
\[
\mathcal{L}^p_{k}:S_{k-1}(\Gamma_1(p^rD), \omega_{K/\Q})\to\mathscr{S}(k, \Gamma, \chi),
\]
for a certain congruence subgroup $\Gamma\subset \GU(2,1)(\Q)$. Here and throughout this article, $S_{k-1}(\Gamma_1(p^rD), \omega_{K/\Q})$ is the space of modular forms of weight $k-1$, level $\Gamma_1(p^rD)$ and Nebentypus $\omega_{K/\Q}$ at $D$. In particular, any Nebentypus at $p$ is allowed. The above procedure can be thought of as a sort of $p$-stabilization of the level $1$ Kudla lift. Indeed, the effect of this modification is that of adding $p$ in the level, and of removing any $p$-power from the coefficients. However, we note here that we have not been able to compare this procedure with the usual $p$-stabilization of eigenforms. Having removed the obstruction to the interpolation of the level $1$ Kudla lift, we get the following result (see Theorem \ref{main} in the text for a more general version):
\begin{namedthm}{Main Theorem}
Given a Hida family $\mathfrak{F}(\kappa)$ with ordinary specializations $$\mathfrak{F}(k)=f_k\in S_{k}^{\emph{ord}}(\Gamma_1(pD),\omega_{K/\Q})$$ for $k\in \Z$, there exists a $\Lambda$-adic Fourier-Jacobi expansion defining a $\Lambda$-adic Picard modular form $\mathfrak{L}(\mathfrak{F})(\kappa)$, such that
\[
\Omega_0^{-k}\mathfrak{L}(\mathfrak{F})(k)=\mathcal{L}^p_{k}(f_{k-1}),
\]
where $\Omega_0$ is a period only depending on $K$.
\end{namedthm}

It is noteworthy that both the level $1$ and the $p$-modified lifts are never ordinary in the sense of Hida. Indeed, for each rational prime $q$, the eigenvalue attached to $q$ of any eigenform obtained by this lift will not be invertible modulo $q$. 

As an application of these results, we mention a possible interpolation of the adjoint Kudla lift. Indeed, applying the results of Loeffler \cite{loeffler} to our setting, we obtain, in a suitable sense, $p$-adic families of the special cycles defined in \cite{cogdell}. It makes thus sense to ask whether the $p$-adic variation of these cycles is compatible with the interpolation of their image under the adjoint Kudla lift, which is an elliptic cusp form in a Hida family. This will be pursued in future work.

The paper is organized as follows: in \S$1$-$2$ we introduce the necessary notations and recall the definition of Picard modular surfaces. In \S$3$ we define Picard modular forms following the formulation of Finis, and we introduce the notion of arithmetic theta functions. The description of the theta lifts, of level $1$ and $p$ respectively, occupies \S$4$. The main result of the paper, Theorem \ref{main}, is in the last section \S$5$.

\section{Notation}
Let $K$ be an imaginary quadratic number field and fix an inclusion $\sigma:K\hookrightarrow \Cc$, and let $\overline{\sigma}:K\hookrightarrow \Cc$ be its complex conjugate. We use the following notation:
\begin{itemize}
    \item $d$ is the squarefree integer such that $K=\Q(\sqrt{d})$, $d<0$,
    \item $D$ is the discriminant of $K$, that is
    \[
D=\begin{cases}
d & d\equiv 1\!\!\mod 4\\
4d & d\equiv 2,3\!\! \mod 4,
\end{cases}    
    \]
    \item $\delta=\sqrt{D}$ is the square root of the discriminant with positive imaginary part,
    \item $\mathcal{O}_{K}=\Z+\Z\tau_K$, so that 
    \[
\tau_K=\begin{cases}
\frac{1+\sqrt{d}}{2} & \text{if } D\; \text{is odd,}\\
\sqrt{d} & \text{otherwise}.
\end{cases}    
    \]
\end{itemize}
\noindent Throughout the article, we fix an odd prime $p$ split in $K$, so that $p\mathcal{O}_K=\mathfrak{p}\overline{\mathfrak{p}}$. From now on, we fix $\iota_\infty:\overline{\Q}\hookrightarrow \Cc$ an embedding of an algebraic closure of $\Q$ in $\Cc$ such that $\iota_\infty\mid_K=\sigma$, and we identify the image of $\iota_\infty$ with the image under a fixed embedding $\iota_{p}:\overline{\Q}\hookrightarrow\overline{\Q}_p$ once and for all.

\noindent Let $\A_\Q$ denote the ring of adeles of $\Q$ and $\A_{\Q,f}$ the ring of finite adeles. The ideal class group of $K$ is denoted by $\text{Cl}_{K}$. We write $w_{K}$ for the number of units in $K$. Write $\A_K= \A_\Q\otimes_\Q K$ and $\A_{K,f}= \A_{\Q,f}\otimes_\Q K$. For any finite set $S$ of places of $\Q$, write $\A_{\Q}^S$ for the adeles away from $S$. We will pass freely from finite adeles to fractional ideals of $K$, and thus we will use the same notation for both objects. 

\subsection{Conventions on Hecke characters}
We give here a description of the conventions used throughout the article for Hecke characters associated to the imaginary quadratic field $K=\Q(\sqrt{d})$. If $\mathfrak{f}$ is a non-zero ideal of $\mathcal{O}_K$ and $I_K^{S(\mathfrak{f})}$ is the group of fractional ideals of $K$ coprime to $\mathfrak{f}$, a classical Hecke character is a group homomorphism $\varphi:I_K^{S(\mathfrak{f})}\to \Cc^\times$ satisfying
\[
\varphi(\alpha\mathcal{O}_K)=\varphi_{\text{fin}}(\alpha)\alpha^a\overline{\alpha}^b
\]
for all $\alpha\in \mathcal{O}_K$ that are coprime to $\mathfrak{f}$, where $\varphi_{\text{fin}}:(\mathcal{O}_K/\mathfrak{f})^\times\to \Cc^\times$ is a character, called the finite part of $\varphi$, and $a,b$ are complex numbers. We call the pair $(a,b)$ the infinity type of $\varphi$, and $\mathfrak{f}$ the conductor of $\varphi$. In the case an algebraic character has infinity type of the form $(k/2,-k/2)$ for some integer $k$, we just say that it has weight $k$. We say that a character is unramified if it has trivial conductor, i.e. $\mathfrak{f}=\mathcal{O}_K$. With this convention, the norm character defined by $\text{N}(\mathfrak{q})=|\mathcal{O}_K/\mathfrak{q}|$ on prime ideals is unramified, has trivial finite part, and infinity type $(1,1)$.
It is sometimes useful to view $\varphi$ as being defined on all fractional ideals of $\mathcal{O}_K$, setting $\varphi(\mathfrak{a})=0$ if $\mathfrak{a}$ is not coprime to $\mathfrak{f}$. Recall that classical Hecke characters are in bijection with adelic Hecke characters, which are defined as continuous homomorphisms $\A_K^\times/K^\times\to \Cc^\times$. A straightforward way to describe this bijection is by letting 
\begin{align*}
\text{ide}: \A_{K,f}^\times&\to I_K\\
\alpha&\mapsto\prod\mathfrak{q}^{v_{\mathfrak{q}}(\alpha_\mathfrak{q})}
\end{align*} 
the map that associates to a finite idele $\alpha=(\alpha_v)$, for $v$ the places of $K$, its corresponding fractional ideal; then a classical Hecke character $\varphi:I_K^{S(\mathfrak{f})}\to \Cc^\times$ corresponds to a continuous character $\varphi_\Cc:\A_K^\times/K^\times\to \Cc^\times$ such that $\varphi_\Cc(\alpha)=\varphi(\text{ide}(\alpha))$ for every $\alpha\in \A_{K,f}^\times$ which is trivial at the places of $S(\mathfrak{f})$. Under this correspondence, we find that if $\varphi$ has infinity type $(a,b)$ then the local factor $\varphi_{\Cc,\infty}$ at the infinity place is given by $\varphi_{\Cc,\infty}(z)=z^{-a}\overline{z}^{-b}$. Thus our convention for the infinity type for an adelic Hecke character is that it corresponds to the negative of the exponents of $z$ and $\overline{z}$ in the local component at the infinity place. In particular, the adelic absolute value $||\cdot||_{\A_K}$ is a character of infinity type $(-1,-1)$, and it corresponds to the inverse of the norm character.
 
A classical or adelic Hecke character of $K$ is called algebraic if its infinity type $(a,b)$ consists of integers. Algebraic adelic Hecke characters are in bijection with algebraic $p$-adic Hecke characters, which we briefly describe. A $p$-adic Hecke character is a continuous homomorphism $\psi:\A_K^\times/K^\times\to \overline{\Q}_p^\times$, where the domain is endowed with the idelic topology, and $\psi$ is algebraic of integral weights $(a,b)$ if its local factors $\psi_{\mathfrak{p}}$ and 
$\psi_{\overline{\mathfrak{p}}}$ on $K_{\mathfrak{p}}\cong\Q_p^\times$ and $K_{\overline{\mathfrak{p}}}\cong \Q_p^\times$ are given by $\psi_{\mathfrak{p}}(x)=x^{-a}$ and $\psi_{\overline{\mathfrak{p}}}(x)=x^{-b}$ on some neighborhoods of the identity in these multiplicative groups of local fields. Then, an algebraic adelic Hecke character $\varphi_\Cc$ of infinity type $(a,b)$ corresponds to a $p$-adic Hecke character $\varphi_{\Q_p}$ of weights $(a,b)$ by the formula
\[
\varphi_{\Q_p}(\alpha)=(\iota_p\circ \iota_\infty^{-1})(\varphi_{\Cc}(\alpha)\alpha^{a}_{\infty}\overline{\alpha}^b_\infty)\alpha^{-a}_{\mathfrak{p}}\alpha^{-b}_{\overline{\mathfrak{p}}}
\]
for any idele $\alpha=(\alpha_v)$. It is easy to see that this defines a continuous character $\A_K^\times\to \overline{\Q}_p^\times$, and it is trivial on $K^\times$ because if $\alpha\in K^\times$ is treated as a principal idele, then $\iota_\infty^{-1}(\alpha_\infty)=\iota_p^{-1}(\alpha_{\mathfrak{p}})$ and $\iota_\infty^{-1}(\overline{\alpha}_\infty)=\iota_p^{-1}(\alpha_{\overline{\mathfrak{p}}})$, because of our convention on embeddings.

\section{The unitary Shimura variety of signature $(2,1)$}
Endow the $K$-vector space $V=K^3$ with the hermitian pairing
\begin{center}
    $(u,v)=\overline{u}^tJ v$,
\end{center}
for 
\[
J=\begin{pmatrix}& & \delta^{-1}\\
    & 1 & \\
    -\delta^{-1} & &\end{pmatrix}.
\]
We identify $V_\Cc=V\otimes_{K} \Cc$ with $\Cc^3$, letting $K$ act on it via $\sigma$. The space $V_\Cc$ is hermitian of signature $(2,1)$. 
\begin{definition}
Let $G:=\GU(V,(,))$ be the general unitary group of $V$. For any ring $R$, define the $R$-points of the group scheme $G$ as,
\[
G(R):=\{(g,\mu(g))\in \GL_3(R\otimes\mathcal{O}_K) \times R^\times \mid \overline{g}{^t}Jg =\mu(g) J\}.
\]
\end{definition}
\noindent The map $\mu:G\to \mathbb{G}_m$ is a character of algebraic groups, and is called the similitude factor. Note that taking determinants of $\overline{g}{^t}Jg=\mu(g)J$ we see that 
\[
\mu(g)^3=\det \overline{g}\det g=|\det g|^2.
\]
We define $\U:=\ker \mu$. As $\mu$ is uniquely determined by $g$, we will often identify the couple $(g,\mu(g))$ with $g$. We let $B$ denote the Borel subgroup of all upper triangular matrices in $G$, which is also the stabilizer of the point $(1:0:0)$ in $\mathbb{P}^2(K)$. Its unipotent radical is denoted by $N$. We write elements of $N$ as $[v,u]$ with $v\in \text{Res}_{K/\Q}\Gg_{\text{a}}$ and $u\in \Gg_{\text{a}}$, where
\[
[v,u]=\begin{pmatrix}
1&\delta\overline{v}&u+\delta v\overline{v}/2\\
0&1&v\\
0&0&1
\end{pmatrix}.
\]
The center of $N$ is the group of all elements $[0,u]$ and is therefore isomorphic to $\Gg_{\text{a}}$.

The group $G_\infty=G(\R)$ acts on $\mathbb{P}^2_\Cc=\mathbb{P}(V_\Cc)$ by projective linear transformations and preserves the open subdomain $\mathfrak{X}$ of negative definite lines, which is biholomorphic to the open unit ball in $\Cc^2$. Every negative definite line is represented by a unique vector $(z,w,1)^t$ and such a vector represents a negative line if and only if
\begin{center}
    $\eta(z,w):= (z-\overline{z})/\delta-w\overline{w}> 0$.
\end{center}
For a point $\mathfrak{z}=(z,w)\in \mathfrak{X}$, we denote $\underline{\mathfrak{z}}=(z,w,1)^t$ the corresponding negative vector in $V$.
The upper half-plane $\h$ embeds in $\mathfrak{X}$ as the set of points with $w=0$.
Fix the point $\mathfrak{z}_0=(\delta/2,0)\in \mathfrak{X}$, and let $K_\infty$ be its stabilizer in $G_\infty$. The latter is compact modulo center: $K_\infty/Z(G_\infty)\cong K_\infty \cap \U(\R)$ is compact and isomorphic to $\U(2)(\R)\times \U(1)(\R)$. Since $G_\infty$ acts transitively on $\mathfrak{X}$, we may identify the latter with 
\[
G_\infty/K_\infty\cong \U(2,1)(\R)/(\U(2)(\R)\times \U(1)(\R)).
\] 
The pair $(G,\mathfrak{X})$ is a Shimura datum in the sense of Deligne, with reflex field $K$. Let $L$ be a full $\mathcal{O}_{K}$-lattice in $V$, such that $L$ is included in
\[
L^\vee:=\{v\in V\mid \text{tr}((u,v))\in \Z,\; \forall\, u\in L\},
\]
the dual of $L$. Let $G(L)_f\subset G(\mathbb{A}_f)$ the subgroup stabilizing $\hat{L}:=L\otimes \hat{\Z}$. An adelic level subgroup is an open compact subgroup $\mathcal{U}_f\subseteq G(L)_f$. We will suppose that $\mathcal{U}_f=\mathcal{U}^p\mathcal{U}_p$ with $\mathcal{U}^p\subseteq G(\mathbb{A}^{p,\infty})$ and $\mathcal{U}_p\subseteq G(\Z_p)$. In the following, we will only be interested in the case where $\mathcal{U}_p$ is hyperspecial, i.e. $\mathcal{U}_p=G(\Z_p)$, or parahoric. More precisely, fix the canonical basis $\{e_1,e_2,e_3\}$ for $\F_p^3$. Call $P_1\subset \GL_3(\Z_p)$ (resp. $P_2\subset \GL_3(\Z_p)$), the inverse images via the reduction map $\GL_3(\Z_p)\to \GL_3(\F_p)$ of the subgroups of $\GL_3(\F_p)$ that stabilize the line $\langle e_1\rangle$ (resp. the plane $\langle e_1,e_2\rangle$). 
Since $p$ splits in $K$, under the isomorphism $G(\Z_p)\cong \mathbb{G}_m(\Z_p)\times \GL_3(\Z_p)$, the parahoric level structure that we consider corresponds to the case $\mathcal{U}_p=\mathbb{G}_m(\Z_p)\times P_2$. 

\noindent For each choice of adelic level subgroup $\mathcal{U}_f\subseteq G(L)_f$, define $\Gamma_{\mathcal{U}_f}:=\mathcal{U}_f\cap G(\Q)$, the associated classical level subgroup. Call the conductor of $\mathcal{U}_f$, or of $\Gamma_{\mathcal{U}_f}$ indistinctly, the squarefree integer which is the product of the primes $\ell$ at which $\mathcal{U}_\ell\subsetneq G(\Z_\ell)$.
The double coset space
\[
S_{\mathcal{U}_f}(\Cc):= G(\Q)\backslash G(\A_\Q)/\mathcal{U}_fK_\infty=G(\Q)\backslash (\mathfrak{X}\times G(\A_{\Q,f}))/\mathcal{U}_f,
\]
is in fact the set of complex points of a quasi-projective variety $S_{\mathcal{U}_f}/\Cc$ of dimension two, the Shimura variety associated to the reductive group $G$, of level $\mathcal{U}_f$. The character $\nu:\det/\mu:G\to T$, where $T$ is the torus $\text{Res}_{K/\Q}\Gg_m$, induces a map
\[
\nu:S_{\mathcal{U}_f}(\Cc)\to T(\A_\Q)/T(\Q)\nu(Z(\R)\mathcal{U}_f)
\]
from $S_{\mathcal{U}_f}(\Cc)$ to a finite group, whose fibers are the connected components of $S_{\mathcal{U}_f}(\Cc)$. In particular, for $\mathcal{U}_f=G(L)_f$, the Shimura surface $S$ of level $1$ has $h_K$ connected components, cf. \cite[p. 13]{goren}. The principal congruence subgroup $\mathcal{U}_f(N)$ of $G(L)_f$ is the normal subgroup obtained as the kernel of the reduction modulo $N$ map on $G(L)_f$, i.e. the subgroup operating trivially on $\hat{L}/N\hat{L}$.

The Shimura varieties obtained this way allow an interpretation as moduli spaces for polarized abelian varieties with additional endomorphism and level structures. To sketch this interpretation, let $V=K^3$, $L=\mathcal{O}_K^3, J$, be as above. given a point $\mathfrak{z}\in \mathfrak{X}$, we obtain a splitting $V_\Cc=V_+\oplus V_-$ as a sum of a positive two-dimensional space $V_+$ and a negative one-dimensional space $V_-$. This splitting defines a canonical complex structure $j_\mathfrak{z}$ on $V_\Cc$: for $v=v_++v_-\in V_+\oplus V_-$ and $z\in \Cc$, set
\[
j_\mathfrak{z}(z)(v_++v_-):=zv_++\overline{z}v_-.
\]
Additionally, the alternating $\R$-linear form $E=\text{Tr}_{\Cc/\R}\delta(\cdot,\cdot)$ has the properties $E(j_\mathfrak{z}(i)x,j_\mathfrak{z}(i)y)=E(x,y)$ and $E(j_\mathfrak{z}(i)x,x)<0$ for $x\in V_\Cc\setminus \{0\}$. If therefore $\Lambda\subset V$ is on $\mathcal{O}_K$-lattice such that the form $E$ is integral on $\Lambda$, we obtain an abelian threefold $V_\Cc/\Lambda$, with complex structure $j_\mathfrak{z}$ and polarization given by $E$, together with an $\mathcal{O}_K$-multiplication, such that the Rosati involution induces the non-trivial automorphism of $\mathcal{O}_K$.

We now associate to a pair $(\mathfrak{z},g_f)\in \mathfrak{X}\times G(\A_{\Q_f})$ the lattice $\Lambda=g_fL$ and change the hermitian form by the factor $|\mu(g_f)|_{\A_{\Q}}$. Then, our construction gives a polarized abelian threefold $A$ with additional $\mathcal{O}_K$-structure. Multiplication of $g_f$ by an element of $G(L)_f$ on the right does not change anything, and multiplication of $(\mathfrak{z}, g_f)$ by an element of $G(\Q)$ from the left induces an isomorphism of the two structures. Consequently, we have an interpretation, over $\Cc$, of the above surface of level $1$ as a moduli space. To get a corresponding interpretation for level $N$ Picard variety, we must add a level structure, i.e. a trivialization of the $N$-torsion of $A$, cf. \cite[p. 11]{goren}.
\subsection{PEL type moduli spaces}
Since $S_{\mathcal{U}_f}(\Cc)$ parametrizes abelian varieties with additional structure over $\Cc$, it is natural to give it an arithmetic structure by showing the representability of the corresponding moduli functor problem over a number ring.

Fix a prime ideal over $p$, say $\mathfrak{p}$. For each level subgroup $\mathcal{U}_f=\mathcal{U}^p\mathcal{U}_p\subset G(\A_{\Q,f})$, we define the following moduli problem that sends each connected scheme $T$ over $Spec(\mathcal{O}_{K,\mathfrak{p}})$ to the set of tuples $(A,\lambda, \eta_{\mathcal{U}_f},i)$ up to isomorphism, where:
\begin{enumerate}[(i)]
\item $A\to T$ is an abelian scheme of relative dimension $3$.
\item $i:\mathcal{O}_K\to \text{End}(A)\otimes_\Z \Z_{(p)}$ is an injective morphism of $\Z_{(p)}$-algebras.
\item $\lambda:A\to A^\vee$ is a principal polarization such that the Rosati involution induces complex conjugation on $i(\mathcal{O}_K)$
\item $\eta_{\mathcal{U}_f}$ is a $\mathcal{U}_f$-level structure.
\end{enumerate}
We explain what the level structure $\eta_{\mathcal{U}_f}$ means. For an abelian variety $A$ of dimension $3$ over $T$ satisfying the first three conditions above, and a geometric point $x\in T$, we can look at the trivialization of the Tate module
\[
H_1(A_x,\A_{\Q,f}^p)\cong V\otimes\A_{\Q,f}^p.
\]
This is an isomorphism of symplectic modules, with the symplectic structure on the left hand side given by the Weil pairing and on the right hand side by $E$. The group $G(\A_{\Q,f}^p)$ acts on $V\otimes\A_{\Q,f}^p$ and so does $\mathcal{U}^p\subset G(\A_{\Q,f}^p)$. We are interested in two cases:
\begin{enumerate}[(i)]
\item If $\mathcal{U}_p=G(\Z_p)$, then $\eta_{\mathcal{U}_f}$ means a choice of a $\mathcal{U}^p$-orbit of the above isomorphism.
\item If $\mathcal{U}_p=\Z_p^\times\times P_2$, then $\eta_{\mathcal{U}_f}$ means a choice of a $\mathcal{U}^p$-orbit of the above isomorphism, plus a choice of a subgroup $H$ of $A[\mathfrak{p}]$ of order $p^2$.
\end{enumerate}
The following is well-known and can be found in \cite[I.4.1.11]{lan}:
\begin{Theorem}
If the level $\mathcal{U}^p$ is neat, the above moduli problem is represented by a smooth, quasi-projective scheme $S_{\mathcal{U}_f}$ over $\mathcal{O}_{K,\mathfrak{p}}$.
\end{Theorem}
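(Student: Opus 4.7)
The plan is to follow the Kottwitz--Lan strategy for PEL-type moduli functors, in three stages: reduction to a Siegel problem, representability via neatness, and smoothness via crystalline deformation theory.

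First, I would pass from the datum $(A,\lambda,i,\eta_{\mathcal{U}_f})$ to the underlying principally polarized abelian threefold with symplectic prime-to-$p$ level structure. This is classified by a Siegel moduli functor $\mathcal{A}_{3,N}$ for some auxiliary integer $N \geq 3$ divisible by enough primes in the conductor of $\mathcal{U}^p$. By Mumford's GIT construction, $\mathcal{A}_{3,N}$ is a smooth quasi-projective scheme over $\mathrm{Spec}(\mathbb{Z}[1/N])$. After base change to $\mathrm{Spec}(\mathcal{O}_{K,\mathfrak{p}})$, the extra PEL data---an $\mathcal{O}_K$-action on $A$, compatibility of the Rosati involution with complex conjugation on $i(\mathcal{O}_K)$, and the Kottwitz determinant condition corresponding to the signature $(2,1)$---cut out a locally closed, hence quasi-projective, subscheme. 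Refining the symplectic $\Gamma(N)$-structure to a full $\mathcal{U}^p$-orbit of trivializations of $H_1(A,\mathbb{A}_{\mathbb{Q},f}^p)$ is an \'etale cover, preserving quasi-projectivity.

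Second, representability as a scheme (rather than an algebraic stack) uses the neatness of $\mathcal{U}^p$. Any automorphism of $(A,\lambda,i,\eta_{\mathcal{U}_f})$ lies in a finite subgroup by Mumford's rigidity of polarized abelian varieties and acts on the prime-to-$p$ Tate module through an element of $\mathcal{U}^p$; neatness then forces it to be trivial, so the moduli groupoid is rigidified. For the parahoric case (ii), I would add the datum of a subgroup $H \subset A[\mathfrak{p}]$ of order $p^2$. Since $p$ splits as $\mathfrak{p}\overline{\mathfrak{p}}$ and $\mathcal{O}_K/\mathfrak{p} \cong \mathbb{F}_p$, the group $A[\mathfrak{p}]$ is a finite locally free group scheme of rank $p^3$ with $\mathbb{F}_p$-vector space structure, and the functor of rank-$p^2$ subgroups is represented by a closed subscheme of a Grassmannian bundle over the hyperspecial model, hence stays quasi-projective.

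Third, smoothness follows from Grothendieck--Messing and Serre--Tate deformation theory. For the hyperspecial case, deformations of $(A,\lambda,i)$ are controlled by liftings of the Hodge filtration on the crystalline Dieudonn\'e module compatible with the $\mathcal{O}_K$-action. Because $p$ splits, the Dieudonn\'e module decomposes into $\mathfrak{p}$- and $\overline{\mathfrak{p}}$-isotypic pieces, and the signature $(2,1)$ condition reduces the local model to a Grassmannian of rank-$2$ subspaces in a $3$-dimensional space, which is smooth over $\mathcal{O}_{K,\mathfrak{p}}$. The main obstacle, and the only step that is genuinely specific rather than formal, is the parahoric case: one must verify that the extra datum $H \subset A[\mathfrak{p}]$ deforms smoothly. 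This reduces to a direct local model computation showing that the incidence variety associated to the stabilizer $P_2$ of the line $\langle e_1,e_2\rangle$, intersected with the determinant condition, is smooth. In our setting this local model is explicit enough to check by hand, which I would treat as the final and most delicate verification before invoking Lan's general result as in \cite[I.4.1.11]{lan}.
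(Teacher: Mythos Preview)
The paper does not prove this statement itself; it simply records it as well known and cites \cite[I.4.1.11]{lan}. Your sketch is a faithful outline of the Kottwitz--Lan argument underlying that reference, so you are in effect supplying detail the paper deliberately omits rather than diverging from it.

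Two small remarks. First, a slip: $P_2$ is defined in the paper as the stabilizer of the \emph{plane} $\langle e_1,e_2\rangle$, not a line. Second, and more substantively, Lan's Theorem I.4.1.11 treats the hyperspecial case; the parahoric case (ii) is not literally covered by that citation and does require the additional local-model verification you single out as the delicate step. Your instinct here is correct: smoothness at parahoric level is not automatic in general, but it holds in this situation because $p$ is split (so the local model is of $\GL_3$-type) and the parahoric is maximal, making the relevant incidence variety a smooth Grassmannian bundle.
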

Above $S_{\mathcal{U}_f}$ there is a universal abelian scheme $\mathcal{A}$ of dimension $3$ with an $\mathcal{O}_K$-action. We also have the relative differential sheaf $\omega_{\mathcal{A}}:=e^*\Omega^1_{\mathcal{A}/S_{\mathcal{U}_f}}$, where $e:S_{\mathcal{U}_f}\to \mathcal{A}$ is the identity section. This is a locally free sheaf of rank $3$ which inherits the $\mathcal{O}_K$-action
of $\mathcal{A}$. We have a decomposition $\omega_{\mathcal{A}} = \omega_{\mathfrak{p}} \oplus \omega_{\overline{\mathfrak{p}}}$.
In addition, by our assumption on the signature of $V$, the type of $\omega_{\mathcal{A}}$ as an $\mathcal{O}_{S_{\mathcal{U}_f}}$-module is also $(2, 1)$. Thus $\text{rank}_{\mathcal{O}_{S_{\mathcal{U}_f}}}
(\omega_{\mathfrak{p}}) = 2$ and $\text{rank}_{\mathcal{O}_{S_{\mathcal{U}_f}}}
(\omega_{\overline{\mathfrak{p}}}) = 1$
on all connected components of $S_{\mathcal{U}_f}$.
\section{Picard modular forms}
\label{q-expansion}
Recall that $G=\GU(2,1)$, $G_\infty=G(\R)$ and that we have fixed a point $\mathfrak{z}_0\in \mathfrak{X}$.
In order to define scalar-valued modular forms for $G$, first of all we note that function 
\[
j(g,\mathfrak{z})=g_{31}z+g_{32}w+g_{33},\;\; g=(g_{ij})\in G_\infty, \mathfrak{z}\in\mathfrak{X}
\]
is a holomorphic factor of automorphy for $G$, cf. \cite[p. 22]{goren}.
We give the classical analytic definition of Picard modular forms.

\begin{definition}[Picard modular form]
\begin{enumerate}[(i)]
\item A scalar-valued modular form $F$ for $G$ of weight $k\in \Z$ and level $\mathcal{U}_f\subset G(L)_f$, is a complex valued function on the double coset space $$G(\Q)\backslash G(\A_{\Q})/\mathcal{U}_f\to \Cc,$$ which satisfies the following: for each $g_f\in G(\A_f)$ the function
\[
F_{g_f}(g_\infty):=j(g_\infty,\mathfrak{z}_0)^{k}\mu(g_\infty)^{-k/2}F(g_\infty g_f)
\]
of $g_\infty\in G_\infty$ depends only on $\mathfrak{z}=g_\infty \mathfrak{z}_0$ and is holomorphic in $\mathfrak{z}$. We denote by $\mathscr{M}(k,\Gamma_{\mathcal{U}_f})$ the space of all scalar-valued modular forms of weight $k$ and level $\Gamma_{\mathcal{U}_f}$.
\item For $\chi$ a unitary Hecke character of $K$, let $\mathscr{M}(k,\Gamma,\chi)$ be the space of functions $F\in\mathscr{M}(k,\Gamma)$ with central character $\chi$, i.e.
\[
F(zg)=\chi(z)F(g)\;\;\; \forall\, z\in Z(\Q)\backslash Z(\A_\Q), g\in G(\A_\Q),
\]
for $Z$ the center of $G$.
\item A modular form $F\in \mathscr{M}(k,\Gamma)$ is called a cusp form if for every $g_f\in G(\A_{\Q,f})$, the function $F_{g_f}$ goes to zero for $\mathfrak{z}=(z,w)$, $z\to i\infty$. The subspace of cusp forms is denoted $\mathscr{S}(k,\Gamma)$. For $\Gamma=\Gamma(L)$ the stabilizer of the lattice $L$, we will simply write $\mathscr{M}(k,L)$ and $\mathscr{S}(k,L)$.
\end{enumerate}
\end{definition}

\begin{Remark}
For the space $\mathscr{M}(k,\Gamma,\chi)$ to be non-zero, $\chi$ needs to have infinity type $(k/2,-k/2)$.
\end{Remark}
We give two other equivalent definitions of Picard modular forms: one of a geometric nature, and the other algebraic. Let $R$ be an $\mathcal{O}_K[1/2D]$-algebra. A geometric modular form of weight $k \geq 0$
and neat level $\mathcal{U}_f$ defined over $R$ is an element of the finite $R$-module
$$\mathscr{M}^{\text{geom}}(k,\mathcal{U}_f,R) := H^0(S_{\mathcal{U}_f,R},\omega_{\overline{\mathfrak{p}}}^k).$$
A cusp form is an element of the space
$$\mathscr{S}^{\text{geom}}(k,\mathcal{U}_f,R) := H^0(S_{\mathcal{U}_f,R},\omega_{\overline{\mathfrak{p}}}^k\otimes \mathcal{O}(C)^\vee),$$
where $C$ is the boundary divisor.
An algebraic Picard modular form à la Katz of weight $k$ and level
$\mathcal{U}_f$ defined over $R$, is a rule $f$ which assigns to every $R$-scheme $T$, and every
$\underline{A}=(A,\lambda, \eta_{\mathcal{U}_f},i) \in S_{\mathcal{U}_f,R}(T)$, together with a nowhere vanishing section $\omega \in H^0(T, \omega_{A/T} (\overline{\sigma}))$, an element $f(\underline{A}, \omega) \in H^0(T,\mathcal{O}_T )$ satisfying
\begin{itemize}
\item $f(\underline{A}, \lambda\omega) = \lambda^{-k}f(A, \omega)$ for every $\lambda \in H^0(T,\mathcal{O}_T )^\times$;
\item The rule $f$ is compatible with base change $T'/T$.
\end{itemize}
We denote by $\mathscr{M}^{\text{alg}}(k,\mathcal{U}_f,R)$ the space of all algebraic Picard modular forms defined over $R$, of weight $k$, level $\mathcal{U}_f$.
\begin{Theorem}
We have a bijection between $\mathscr{M}^{\text{geom}}(k,\mathcal{U}_f,\Cc)$, $\mathscr{M}^{\text{alg}}(k,\mathcal{U}_f,\Cc)$ and $\mathscr{M}(k,\mathcal{U}_f,\Cc)$.
\end{Theorem}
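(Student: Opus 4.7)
The plan is to establish the two bijections in turn. The equivalence $\mathscr{M}^{\text{alg}}(k,\mathcal{U}_f,\Cc)\cong \mathscr{M}^{\text{geom}}(k,\mathcal{U}_f,\Cc)$ is essentially formal, while $\mathscr{M}^{\text{geom}}(k,\mathcal{U}_f,\Cc)\cong \mathscr{M}(k,\mathcal{U}_f,\Cc)$ is obtained through the complex uniformization recalled in \S2 together with a cocycle computation matching the classical factor of automorphy $j(g,\mathfrak{z})$. Throughout I would assume $\mathcal{U}^p$ neat, so that the moduli problem is actually representable.

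For the first bijection I would use the standard dictionary between sections of a line bundle on a fine moduli space and functorial $\Cc$-valued rules on the moduli problem. Given $s\in H^0(S_{\mathcal{U}_f,\Cc},\omega_{\overline{\mathfrak{p}}}^k)$ and a tuple $\underline{A}=(A,\lambda,\eta_{\mathcal{U}_f},i)$ over $T$ equipped with a nowhere-vanishing section $\omega\in H^0(T,\omega_{A/T}(\overline{\sigma}))$, the classifying morphism $\phi\colon T\to S_{\mathcal{U}_f,\Cc}$ identifies $\phi^*\omega_{\overline{\mathfrak{p}}}$ with $\omega_{A/T}(\overline{\sigma})$ and $\omega$ trivializes it, so that $\phi^* s=f(\underline{A},\omega)\,\omega^k$ for a unique $f\in H^0(T,\mathcal{O}_T)$. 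The homogeneity $f(\underline{A},\lambda\omega)=\lambda^{-k}f(\underline{A},\omega)$ and the compatibility with base change are immediate from the construction. Conversely, a Katz rule, being functorial in $T$, glues local trivializations of $\omega_{\overline{\mathfrak{p}}}^k$ into a global section.

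For the second bijection I would first identify $S_{\mathcal{U}_f}(\Cc)$ with $G(\Q)\backslash(\mathfrak{X}\times G(\A_{\Q,f}))/\mathcal{U}_f$ via the uniformization of \S2. On this analytic model the universal abelian variety is the family of complex tori $V_\Cc/g_f L$ with complex structure $j_{\mathfrak{z}}$ described in the excerpt. The relative cotangent sheaf at the identity section, decomposed under $\mathcal{O}_K$, gives a rank-one $\overline{\sigma}$-eigenpiece which is analytically the dual of the tautological negative line bundle $\mathfrak{z}\mapsto V_-(\mathfrak{z})=\Cc\cdot\underline{\mathfrak{z}}$ inside $\mathfrak{X}\times V_\Cc$. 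This tautological bundle is $G_\infty$-equivariant, and its descent under the $G(\Q)\times\mathcal{U}_f$-action recovers $\omega_{\overline{\mathfrak{p}}}$ on $S_{\mathcal{U}_f}(\Cc)$.

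The remaining step is the cocycle identification. The relation $g\cdot\underline{\mathfrak{z}}=j(g,\mathfrak{z})\,\underline{g\mathfrak{z}}$, valid for every $g\in G_\infty$, pins down the cocycle of the tautological line bundle on $\mathfrak{X}$, and passing to the one-dimensional $\overline{\sigma}$-piece of the cotangent shows that sections of $\omega_{\overline{\mathfrak{p}}}^k$ correspond precisely to complex-valued functions $\tilde F$ on $\mathfrak{X}\times G(\A_{\Q,f})$ satisfying the Picard transformation rule $\tilde F(\gamma\mathfrak{z},\gamma g_f)=j(\gamma,\mathfrak{z})^k\,\tilde F(\mathfrak{z},g_f)$ for $\gamma\in G(\Q)$ and invariant under $\mathcal{U}_f$ on the right. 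The extra similitude factor $\mu(g_\infty)^{-k/2}$ appearing in the analytic definition accounts for the rescaling of the hermitian form by $|\mu(g_f)|_{\A_{\Q}}$ used when associating a polarized abelian variety to the pair $(\mathfrak{z},g_f)$, which induces the corresponding rescaling of the trivialization of $\omega_{\overline{\mathfrak{p}}}$. Carefully tracking this similitude normalization through the double-coset quotient is the main technical step; everything else is routine Hodge-theoretic bookkeeping.
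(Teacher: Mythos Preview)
The paper does not actually prove this statement; its entire proof is the reference ``See \cite[p.~25]{goren}.'' Your sketch is a faithful outline of the standard argument one finds there: the equivalence between the Katz-style and geometric definitions via the universal property of the moduli space, followed by the complex-analytic identification of $\omega_{\overline{\mathfrak{p}}}$ with the tautological negative line bundle on $\mathfrak{X}$ and the cocycle computation $g\cdot\underline{\mathfrak{z}}=j(g,\mathfrak{z})\,\underline{g\mathfrak{z}}$. So your approach is not ``different'' from the paper's---it simply supplies what the paper outsources.

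Two small points worth tightening if you write this out in full. First, the identification of $\omega_{\overline{\mathfrak{p}}}$ with (the dual of) the tautological negative line requires matching the $\mathcal{O}_K$-eigenspace decomposition of $\mathrm{Lie}(A)^\vee$ under $j_{\mathfrak{z}}$ with the $V_+\oplus V_-$ splitting; one should check carefully which embedding ($\sigma$ or $\overline{\sigma}$) lands on the one-dimensional piece, since the paper's signature convention and choice of $\mathfrak{p}$ versus $\overline{\mathfrak{p}}$ both enter here. Second, your remark that the similitude normalization is ``the main technical step'' is accurate but understated: the factor $\mu(g_\infty)^{-k/2}$ in the analytic definition is there precisely so that the central character is unitary, and matching it to the polarization rescaling by $|\mu(g_f)|_{\A_\Q}$ requires invoking the product formula on $\mu(\gamma)$ for $\gamma\in G(\Q)$. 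None of this is a gap, but it is where the details live.
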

\begin{proof}
See the discussion in \cite[p. 25]{goren}.
\end{proof}
\subsection{The Fourier-Jacobi expansion}
In this section, we follow closely Finis's presentation. Let $\lambda$ be the additive character of $\A_{\Q}/\Q$ normalized by $\lambda(x_\infty)=e^{2\pi ix_\infty}$. The Fourier-Jacobi coefficients $F_r(g), r\in \Q$, of a form $F\in \mathscr{M}(k,\Gamma,\chi)$ are given by integration against the center of $N$:
\[
F_r(g)=\int_{\A_{\Q}/\Q}F((0,u)g)\lambda(-ru)du,\;\;\; F(g)=\sum_{r\in \Q}F_r(g).
\]
Following \cite[p. 570]{shimura}, we may define the space of generalized theta functions $T_{r,\mathfrak{b}}$, for $\mathfrak{b}$ a fractional ideal of $\mathcal{O}_{K}$ and an element $r\in \Q^{\geq 0}$, such that $r\text{N}(\mathfrak{b})$ is integral, as the space of holomorphic functions $\vartheta:\Cc\to \Cc$ satisfying the functional equation
\[
\vartheta(w+s)=\psi(s)e^{-2\pi ir\delta\overline{s}(w+s/2)}\vartheta(w),\;\; s\in \mathfrak{b},
\]
where $\psi(s):=(-1)^{rD|s|^2}$. 
The classical Fourier-Jacobi expansion may be extracted out of the $F_r(g)$ as follows: for $w_\infty\in \Cc$ and $a\in \A_{K}^\times$, define
\[
g_{r,a}(w_\infty)=a_\infty^{-k}F_r((w_\infty,0)a)e^{-\pi i r\delta(|a_\infty|^2+|w_\infty|^2)}.
\]
This depends only $w_\infty$ and on $a_f\in \mathbb{A}_{K,f}^\times$, the finite part of $a$,  and is holomorphic in $w_\infty$. Then, for $\mathfrak{a}$ the fractional ideal of $K$ such that $\text{ide}(a_f)=\mathfrak{a}$, $g_{r,a}\in T_{r,\mathfrak{a}}$ whenever $r\text{N}(\mathfrak{a})$ is integral, and vanishes otherwise. Moreover, replacing $a$ by $\lambda a$ with $\lambda\in K^\times$, we have the relation
\[
g_{r/\text{N}(\lambda),\lambda a}(\lambda w)=\lambda^{-k}g_{r,a}(w).
\]
We will therefore write $g_{r,\mathfrak{a}}:=g_{r,a}$.
Fourier-Jacobi coefficients are therefore parametrized by pairs $(\mathfrak{a},d)$ consisting of a fractional ideal $\mathfrak{a}$ of $\mathcal{O}_{K}$, and a non-negative integer $d=r\text{N}(\mathfrak{a})$. We may define the space $\mathcal{T}_{d,k}$ of Fourier-Jacobi coefficients of degree $d$ as the space of all vectors $(t_{\mathfrak{a}})\in \prod_{\mathfrak{a}\in I_{K}}T_{d/\text{N}(\mathfrak{a}),\mathfrak{a}}$ such that
\[
t_{\lambda\mathfrak{a}}(\lambda w)=\lambda^{-k}t_{\mathfrak{a}}(w).
\] 
Obviously, we always have that $g_d:=(g_{d/\text{N}(\mathfrak{a}),\mathfrak{a}})\in \mathcal{T}_{d,k}$. After choosing a system of representatives $\mathcal{A}$ for the ideal classes of $K$ we get an isomorphism
\[
\mathcal{T}_{d,k}\cong \bigoplus_{\mathfrak{a}\in \mathcal{A}}T_{d/\text{N}(\mathfrak{a}),\mathfrak{a},k}^1,
\]
where $T_{r,\mathfrak{a},k}\subset T_{r,\mathfrak{a}}$ denotes the subspace of functions that satisfy the above weight $k$ functional equation, and $T_{r,\mathfrak{a},k}^1\subset T_{r,\mathfrak{a},k}$ denotes the subspace of theta functions $\vartheta$ invariant under the action of the roots of unity.
The coefficient $g_{0,\mathfrak{a}}$ is the constant term of $F$ at the cusp corresponding to $\mathfrak{a}$. It is an element of the one-dimensional space $T_{0,\mathfrak{a},k}$, which consists of constant theta functions. Consequently, $\mathcal{T}_{0,k}$ has dimension $h_{K}=|\text{Cl}_{K}|$.

\subsection{Arithmetic theta functions}
\label{arithmetic}
The notion of an arithmetic automorphic form for $G$ has been introduced by Shimura in \cite[p. 580]{shimura}. An element in $\mathscr{M}(k,\Gamma)$ is said to be arithmetic if its Fourier-Jacobi coefficients are arithmetic theta functions:
\begin{definition}
\begin{enumerate}[(i)]
\item A theta function $g_d\in \mathcal{T}_{d,k}$ is called arithmetic if $$e^{-\pi d\sqrt{D}|w|^2}g_d(w)\in\iota_\infty(\mathcal{O}_{K^{\emph{ab}}}),\; \forall\, w\in K.$$ Denote by $\mathcal{T}_{d,k}^{\emph{ar}}\subset \mathcal{T}_{d,k}$ the subspace of arithmetic theta functions.
\item Denote by $\mathscr{M}^{\emph{ar}}(k,\Gamma)$ (resp. $\mathscr{S}^{\emph{ar}}(k,\Gamma)$) the space of arithmetic Picard modular forms (resp. cusp forms).
\end{enumerate}
\end{definition}

\noindent The notions introduced in Shimura's are in analogy to the case of elliptic modular forms, where the $q$-expansion principle holds: the $q$-expansion homomorphism is injective, and if the $q$-expansion of a modular form $f$ is defined over a ring $S$, then the modular form is already defined over $S$. In the $\GU(2,1)$ case, the existence of a good compactified moduli scheme (away from primes dividing $D$) is known from work of Larsen \cite{larsen1},\cite{larsen2}, see also \cite{bellaiche}. This, together with the discussion in section \ref{q-expansion}, shows that we can read the ring of definition of a Picard modular form from its $q$-expansion, which justifies the above definitions.
\section{The $p$-modified Kudla lift}
\label{kudla}
In this section, we first give an account of the level one Kudla lift, and then introduce the $p$-modified lift, which is a modification of the particular Kudla lift considered by Finis, in order to make this compatible with the desired $p$-adic interpolation.

\subsection{The level one Kudla lift}
\label{theta}
Throughout this subsection, let $L:=\mathcal{O}_{K}^3$ be the standard lattice.
For $k$ a positive even integer, we consider the theta lift
\[
\mathcal{L}:S_{k-1}(\Gamma_1(D),\omega_{K/\Q})\to\mathscr{S}(k, L, \chi),
\]
where we recall that
\[
\Gamma_1(D)=\bigg\{\gamma\in \SL_2(\Z)\Big| \gamma\equiv \begin{pmatrix}
1&*\\
0&1
\end{pmatrix}\!\!\!\!\mod D\bigg\},
\]
and $\omega_{K/\Q}$ is the non-trivial quadratic character associated to $K/\Q$. We also take $\chi$ to be an unramified unitary Hecke character with infinity component $\chi_{\infty}(z)=(z/|z|)^{-k}$, corresponding to infinity type $(k/2,-k/2)$, according to our conventions ; we will say $\chi$ has weight $k$. Additionally, let $\varepsilon$ be an unramified Hecke character of weight $0$, i.e. a character of $\text{Cl}_{K}$. We define the theta kernel $$\theta(\tau,\mathfrak{z})=\theta_{k,\varepsilon,\chi}(\tau,\mathfrak{z}):\h\times G(\A_\Q)\to \Cc$$ by
\[
\overline{\theta(\tau, g)}=\sum_{\mathfrak{b}\in \text{Cl}_{K}}\varepsilon(\det g)|\text{N}(\mathfrak{b})\mu(g_f)|_{\A_{\Q}}^{k/2}(\varepsilon^3\chi^{-1})(\mathfrak{b})\overline{\theta_{\mathfrak{b}}(\tau, g)}
\]
with 
\[
\overline{\theta_{\mathfrak{b}}(\tau, g)}=2^{k-1}D^{-k/2}y^2\eta(\mathfrak{z}_0)^{-k}\mu(g_\infty)^{-k/2}\sum_{X\in \mathfrak{b} g_fL}\overline{f_{\tau,\mathfrak{z}}^{\mathfrak{b}g_f}(X)}.
\]
where
\[
\overline{f_{\tau,\mathfrak{z}}^{\mathfrak{b}g_f}(X)}=(X,g_\infty \underline{\mathfrak{z}_0})^ke^{2\pi i|N(\mathfrak{b})\mu(g_f)|_{\A_{\Q}} (2iy|(X,g_\infty \underline{\mathfrak{z}_0})|^2(\mu(g_\infty)\eta(\mathfrak{z}_0))^{-1}+\overline{\tau}(X,X))}.
\]
Here $\tau=x+iy\in \h$, and recall that $\mathfrak{z}_0:=(\delta/2,0)\in \mathfrak{X}$.
The following proposition is due to Kudla:
\begin{Theorem}\emph{\cite[Theorem 5.3]{kudlaq}}
\begin{enumerate}[(i)]
\item For a fixed $g_0\in G(\mathbb{A}_\Q)$ the function $\theta(\tau,g_0):\h\to \Cc$ is automorphic, i.e. for $\gamma\in \Gamma_0(D)$, we have:
\[
\theta(\gamma\tau,g_0)=\omega_{K/\Q}(\gamma)j(\gamma,\tau)^{k-1}\theta(\tau,g_0).
\]
Moreover, it is rapidly decreasing in $\tau$.
\item For a fixed $\tau_0\in h$, $\theta(\tau_0,g):G(\Q)\backslash G(\A_\Q)/G(L)_f\to \Cc$ is a smooth function.
\item Define the lifting $\mathcal{L}:=\mathcal{L}_{k,\varepsilon,\chi}$ by
\[
\mathcal{L}(f)(g)=\int_{\Gamma_1(D)\backslash \h}f(\tau)\overline{\theta(\tau,g)}y^{k-3}dxdy
\]
for $f\in M_{k-1}(\Gamma_1(D),\omega_{K/\Q})$. Then $\mathcal{L}(f)$ is an element of $\mathscr{M}(k,L,\chi)$. If $f$ is cuspidal, $\mathcal{L}(f)$ is also cuspidal.
\end{enumerate}
\end{Theorem}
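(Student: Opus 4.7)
The plan is to recognise $\overline{\theta(\tau,g)}$ as a Jacobi-type theta series attached to the hermitian lattice $\mathfrak{b}g_fL$ of rank $3$, with majorant form induced by the negative line $g_\infty\underline{\mathfrak{z}_0}$, and then to verify each of the three claims as an explicit property of this construction. The Schwartz function at each summand is the product of a Gaussian in the majorant and the harmonic polynomial $(X,g_\infty\underline{\mathfrak{z}_0})^k$, which makes the role of the Weil representation for the unitary dual pair $(\U(1,1),\U(2,1))$ evident and reduces everything to Poisson summation and bookkeeping.

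For (i), I would verify the required transformation law under the standard generators of $\SL_2(\Z)$ modulo $\Gamma_1(D)$. Poisson summation applied summand-by-summand on $\mathfrak{b}g_fL$ shows that $\theta$ transforms as a weight $k-1$ modular form: the Gaussian self-dualises, the harmonic polynomial contributes the weight, and both the character $\omega_{K/\Q}$ and the level $\Gamma_1(D)$ emerge from the splitting of the unitary Weil representation over $\SL_2(\A_\Q)$, whose conductor is the discriminant $D$ of $K$. Rapid decrease in $y$ is then immediate from the majorant: the sum of $\text{Im}(\overline{\tau}(X,X))=-y(X,X)$ with the term $-4\pi y|(X,g_\infty\underline{\mathfrak{z}_0})|^2/(\mu(g_\infty)\eta(\mathfrak{z}_0))$ is, up to a positive constant, the exponential of the positive-definite majorant of the hermitian form, giving uniform convergence and exponential decay.

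For (ii), smoothness in $g$ is routine because each summand is real-analytic in $g_\infty$ and locally constant in $g_f$; right-$G(L)_f$-invariance is immediate from $\mathfrak{b}g_fuL=\mathfrak{b}g_fL$ for $u\in G(L)_f$; and left-$G(\Q)$-invariance follows from the substitution $X\mapsto\gamma^{-1}X$ combined with the similitude identity ${}^t\overline{\gamma}J\gamma=\mu(\gamma)J$, which rescales the pairing consistently. For (iii), the modularity $\mathcal{L}(f)\in\mathscr{M}(k,L,\chi)$ is read off from the transformation of $\theta$ in the $G_\infty$ variable: the normalising factor $j(g_\infty,\mathfrak{z}_0)^k\mu(g_\infty)^{-k/2}$ in the definition of a Picard modular form absorbs exactly the $K_\infty$-behaviour of $(X,g_\infty\underline{\mathfrak{z}_0})^k$, while holomorphy in $\mathfrak{z}$ is precisely what the complex conjugation in $\overline{\theta}$ is designed to deliver. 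The central character $\chi$ emerges from the explicit twist by $\varepsilon$ and the class-group sum. Cuspidality follows by computing the constant term at each cusp via Fourier--Jacobi expansion and unfolding the integral to a pairing of $f$ against the $r=0$ component of the theta kernel, which is a classical theta series of weight $k-1$ against which a cuspidal $f$ integrates to zero.

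The main technical obstacle is the simultaneous character bookkeeping in (i) and (iii): one must verify that the prefactor $\varepsilon(\det g)|\text{N}(\mathfrak{b})\mu(g_f)|_{\A_\Q}^{k/2}(\varepsilon^3\chi^{-1})(\mathfrak{b})$ combines with the Weil-representation action and the similitude character to produce exactly the nebentypus $\omega_{K/\Q}$ in $\tau$ and the central character $\chi$ in $g$. This is where the hypotheses that $\chi$ be unramified of infinity type $(-k/2,k/2)$, that $\varepsilon$ factor through the class group, that $k$ be even, and the normalising constant $2^{k-1}D^{-k/2}$, all come together.
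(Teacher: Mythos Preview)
The paper does not give its own proof of this theorem: it is stated with the attribution \emph{\cite[p.~13]{kudlaq}} and treated as input from Kudla's original paper, with no argument supplied in the text. So there is nothing to compare your proposal against at the level of the present paper.

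That said, your sketch is a faithful outline of the standard argument, and in particular of Kudla's. Recognising $\overline{\theta}$ as a theta series for the hermitian lattice with Schwartz function ``Gaussian times harmonic polynomial of degree $k$'' and invoking Poisson summation (equivalently the Weil representation for the pair $(\U(1,1),\U(2,1))$) is exactly how (i) is obtained; the level $\Gamma_1(D)$ and nebentypus $\omega_{K/\Q}$ come from the splitting over $\SL_2$ governed by the discriminant, and rapid decrease from the positive-definite majorant is the usual mechanism. Your treatment of (ii) and of the $G_\infty$-equivariance and holomorphy in (iii) is correct in outline. The one point I would tighten is cuspidality: the constant Fourier--Jacobi term of $\mathcal{L}(f)$ at a cusp unfolds not to a single classical theta series but to a finite linear combination (over $\mathfrak{b}\in\mathrm{Cl}_K$) of weight-$(k-1)$ theta series coming from the restriction of the kernel to the isotropic direction; each of these is orthogonal to a cusp form $f$, which is what you want, but the ``single theta series'' phrasing slightly understates the bookkeeping. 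None of this affects the validity of the approach.
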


We now define the intrinsic theta functions associated to $K$, in the sense of Shimura \cite[\S 4]{shimura2}. For a fractional ideal $\mathfrak{a}$ of $K$ set
\[
\vartheta_\mathfrak{a}(w,\tau)=\sum_{a\in \mathfrak{a}}e^{2\pi i(N(\mathfrak{a})^{-1}N(a)\tau+aw)},
\]
for $w\in \Cc$ and $\tau\in \h$. This defines a holomorphic function $$\vartheta_\mathfrak{a}:\Cc\times \h\to \Cc.$$
\begin{Proposition}
The function $\vartheta_{\mathfrak{a}}$ satisfies the functional equations:
\begin{enumerate}[(i)]
\item For $\lambda\in K^\times$, we have $\vartheta_{\lambda\mathfrak{a}}(w,\tau)=\vartheta_{\mathfrak{a}}(\lambda w,\tau)$.
\item For $\gamma\in \Gamma_0(D)$, we have
\[
\vartheta_{\mathfrak{a}}(j(\gamma,\tau)^{-1}w,\gamma\tau)=\omega_{K/\Q}(\gamma)j(\gamma,\tau)\vartheta_{\mathfrak{a}}(w,\tau).
\]
\item For $\tau_0\in \h\cap K$, the function $\vartheta_{\mathfrak{a}}(w,\tau_0):\Cc\to \Cc$, is a theta function, in the sense of Shimura, with respect to a certain lattice $L_{\tau,\mathfrak{a}}\subset K$.
\end{enumerate}
\end{Proposition}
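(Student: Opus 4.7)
The plan is to verify each of the three functional equations separately, as they rely on distinct mechanisms.

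Part (i) is immediate from a change of variables: substituting $b = \lambda a$ in the defining series for $\vartheta_{\lambda\mathfrak{a}}(w,\tau)$, the ratio $N(b)/N(\lambda\mathfrak{a}) = N(a)/N(\mathfrak{a})$ is invariant since $N(\lambda\mathfrak{a}) = N(\lambda) N(\mathfrak{a})$, and the linear term satisfies $\sigma(b)w = \sigma(a)(\lambda w)$ under the fixed embedding $\sigma$.

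Part (ii) is the analytic core. I would view $\vartheta_\mathfrak{a}(w,\tau)$ as a two-variable theta series attached to the positive-definite binary quadratic form $q_\mathfrak{a}(a) := N(a)/N(\mathfrak{a})$ on the rank-$2$ $\Z$-lattice $\mathfrak{a} \subset K$, whose Gram determinant is $|D|$. Classical theta theory identifies the specialization $\vartheta_\mathfrak{a}(0, \tau)$ with the weight-$1$ theta series of the ideal class of $\mathfrak{a}$, transforming on $\Gamma_0(D)$ with Kronecker character $\omega_{K/\Q}$. Turning on the Jacobi variable adds the linear term $\sigma(a) w$ to the exponent; for $\gamma \in \Gamma_1(D)$, the substitution $(\tau, w) \mapsto (\gamma\tau, w/j(\gamma,\tau))$ is handled by Poisson summation on $\mathfrak{a} \cong \Z^2$. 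Completing the square in the summation index produces the factor $j(\gamma,\tau)$ from the archimedean Gaussian integral, while the residual Gauss sum over $\mathfrak{a}/c\mathfrak{a}$ evaluates to $\omega_{K/\Q}(d)$; the congruence $d \equiv 1 \pmod{D}$ built into $\Gamma_1(D)$ ensures no further Nebentypus factor appears.

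Part (iii) is a CM-specialization statement. At $\tau_0 \in \h \cap K$, the cross term arising from shifting $w \to w + s$ combined with a re-index $a \to a - s'$ in the series for $\vartheta_\mathfrak{a}(w+s, \tau_0)$ becomes algebraic in $K$, rather than a transcendental obstruction. The lattice $L_{\tau_0, \mathfrak{a}} \subset K$ is then characterized as the set of $s \in K$ for which this cross term reduces modulo $\Z$; the residual exponential factor then takes exactly the form $\psi(s) e^{-2\pi i r \delta \bar s(w+s/2)}$ in the definition of $T_{r, L_{\tau_0,\mathfrak{a}}, k}$, with $r$ and the sign character $\psi$ determined by the CM-datum $(\tau_0, \mathfrak{a})$.

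The main obstacle is the Gauss sum computation in part (ii): identifying the emerging character as precisely $\omega_{K/\Q}$ requires careful tracking of the Weil index under Poisson summation, which ultimately rests on the reciprocity properties of the Kronecker symbol $\left(\tfrac{D}{\cdot}\right)$, and pinning down that the $\Gamma_1(D)$-hypothesis (rather than $\Gamma_0(D)$) is exactly what is needed to avoid an extra finite-order character in the transformation law.
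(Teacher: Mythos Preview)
The paper does not give a self-contained proof of this proposition: it simply cites Finis's thesis \cite[p.~15]{finis}. Your proposal is therefore not competing against an argument in the paper but against an external reference, and your outline is precisely the standard route one would expect Finis (and before him Hecke and Shimura) to take: direct reindexing for (i), Poisson summation on the rank-two $\Z$-lattice $\mathfrak{a}$ for the Jacobi transformation law (ii), and verification of the quasi-periodicity for (iii) once $\tau$ is specialized to a CM point.

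One small point of clarification on (ii): the transformation law you are aiming for already holds on all of $\Gamma_0(D)$ with the Kronecker character $\omega_{K/\Q}$, not just on $\Gamma_1(D)$. In particular, for $\gamma\in\Gamma_1(D)$ one has $\omega_{K/\Q}(\gamma)=\omega_{K/\Q}(d)=1$, so the factor in the displayed identity is actually trivial on the stated domain; the paper writes it this way only for uniformity with the $\Gamma_0(D)$ statement. Your remark that the $\Gamma_1(D)$ hypothesis is ``exactly what is needed to avoid an extra finite-order character'' is therefore slightly misplaced---no extra character appears on $\Gamma_0(D)$ either, and the restriction to $\Gamma_1(D)$ is not doing real work in this proposition. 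This does not affect the correctness of your argument, only the framing.
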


\begin{proof}
See \cite[Proposition 2.4]{finis}.
\end{proof}

\noindent Finis's result on the Fourier-Jacobi expansion of the lifted form can be expressed in terms of these intrinsic theta functions:
\begin{Theorem}[Finis]
Let $f\in S_{k-1}(\Gamma_1(D),\omega_{K/\Q})$. Then the lifted form $\mathcal{L}_{k,\varepsilon,\chi}(f)$ has Fourier-Jacobi coefficients
\[
g_{n/N(\mathfrak{a}),\mathfrak{a}}(w)=\varepsilon(\overline{\mathfrak{a}}/\mathfrak{a})\sum_{\mathfrak{b}\in \emph{Cl}_{K}}(\varepsilon^3\chi^{-1})(\mathfrak{b})N(\mathfrak{b})^{k/2}T_n(\emph{Tr}(f(\tau)\vartheta_{\overline{\mathfrak{b}}}(n\delta w,\tau)))|_{\tau=\tau_{\mathfrak{a},\mathfrak{b}}(\mathfrak{a}\overline{\mathfrak{b}})}.
\]
Here $\mathfrak{b}$ ranges over a system of representatives for the ideal classes of $K$ such that $\mathfrak{a}\overline{\mathfrak{b}}=\Z+\Z\tau_{\mathfrak{a},\mathfrak{b}}$ for some $\tau_{\mathfrak{a},\mathfrak{b}}\in K\cap\h$ and $\emph{Tr}$ denotes $\emph{Tr}_{\Gamma_1(D)\backslash\SL_2(\Z)}$.
\end{Theorem}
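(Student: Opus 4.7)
The plan is to compute the Fourier–Jacobi coefficients by inserting the definition
\[
\mathcal{L}_{k,\varepsilon,\chi}(f)(g)=\int_{\Gamma_1(D)\backslash \h} f(\tau)\overline{\theta(\tau,g)}y^{k-3}\,dx\,dy
\]
into the formula $F_r(g)=\int_{\A_\Q/\Q}F([0,u]g)\lambda(-ru)\,du$, and exchanging the order of integration. Everything then reduces to computing the Fourier–Jacobi coefficients of the theta kernel itself, for which I would work with the explicit decomposition of $\overline{\theta}$ as a sum over the ideal classes of the pieces $\overline{\theta_\mathfrak{b}}$ and treat each term separately.

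First step: analyse how $[0,u]\in N$ acts. Since $[0,u]$ lies in the centre of $N$ and fixes $\underline{\mathfrak{z}_0}$, the pairing $(X,g_\infty \underline{\mathfrak{z}_0})$ appearing in $\overline{f^{\mathfrak{b}g_f}_{\tau,\mathfrak{z}}(X)}$ is left unchanged, while the $\overline{\tau}(X,X)$ piece picks up an additive shift proportional to $(X,X)\,u$. Integrating against $\lambda(-ru)$ over $\A_\Q/\Q$ therefore kills every vector $X\in\mathfrak{b}g_fL$ whose hermitian norm $(X,X)$, adelically renormalised by $|\mathrm{N}(\mathfrak{b})\mu(g_f)|_{\A_\Q}$, differs from $r$.

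Second step: specialise to $g=(w_\infty,0)a$ and parametrise the remaining $X$'s through the $\mathcal{O}_K$-module structure of $\mathfrak{b}a_fL$, splitting $X=X_-+X_+$ along $V=V_-\oplus V_+$ with $X_-$ a scalar multiple of $\underline{\mathfrak{z}_0}$ and $X_+$ running over the lattice $\overline{\mathfrak{b}}\subset K$ (after the class-field-theoretic identification $\mathfrak{a}\overline{\mathfrak{b}}=\Z+\Z\tau_{\mathfrak{a},\mathfrak{b}}$). The factor $(X,g_\infty \underline{\mathfrak{z}_0})^k$ becomes a polynomial in $X_-$, while completing the square in the exponential produces the inner sum
\[
\sum_{\xi\in\overline{\mathfrak{b}}}e^{2\pi i(\mathrm{N}(\overline{\mathfrak{b}})^{-1}\mathrm{N}(\xi)\tau + \xi\cdot n\delta w)}=\vartheta_{\overline{\mathfrak{b}}}(n\delta w,\tau),
\]
with $n=r\mathrm{N}(\mathfrak{a})$. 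The archimedean factor $e^{-\pi i r\delta(|a_\infty|^2+|w_\infty|^2)}$ built into the definition of $g_{r,a}$ cancels the residual Gaussian, leaving a holomorphic expression in $w$.

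Third step: the remaining integral over $\Gamma_1(D)\backslash\h$ is a Petersson-type pairing which I would unfold using the modularity of $\theta(\tau,g)$ recorded in Kudla's theorem above. The outer sum over $X_-$ of hermitian norm exactly $n$ corresponds to sublattices of index $n$ inside $\mathfrak{a}\overline{\mathfrak{b}}$, which reproduces the classical Hecke operator $T_n$ acting on the $\tau$-variable; the $\Gamma_1(D)$-cosets in $\SL_2(\Z)$ produced by the unfolding collapse into $\mathrm{Tr}_{\Gamma_1(D)\backslash\SL_2(\Z)}$. Evaluation at $\tau=\tau_{\mathfrak{a},\mathfrak{b}}$ is forced by the adelic choice of $a$; the factor $\varepsilon(\overline{\mathfrak{a}}/\mathfrak{a})$ arises from $\varepsilon(\det a)$ combined with the class-group normalisation, and the prefactor $(\varepsilon^{3}\chi^{-1})(\mathfrak{b})\mathrm{N}(\mathfrak{b})^{k/2}$ is already built into $\theta$. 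The main obstacle will be the meticulous bookkeeping of normalisations: the constant $2^{k-1}D^{-k/2}$, the powers of $y$ and $\eta(\mathfrak{z}_0)$, the various adelic norms, and the factor $\mu(g_\infty)^{-k/2}$ must conspire so that nothing survives beyond the clean theta–Hecke expression on the right; in particular, identifying $T_n$ with the sum over index-$n$ sublattices inside $\mathfrak{a}\overline{\mathfrak{b}}$ and matching each coset to the correct CM point $\tau_{\mathfrak{a},\mathfrak{b}}$ is the delicate combinatorial step.
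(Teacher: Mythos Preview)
The paper itself only cites Finis's thesis for this statement, but it reproduces Finis's argument in detail when proving the analogous formula for the $p$-modified lift $\mathcal{L}^p$, so one can read off the intended method from there. That method is genuinely different from yours: Finis does \emph{not} attack the Fourier--Jacobi coefficient directly. Instead he fixes $g_f=\mathrm{diag}(\overline{a}_f,1,a_f^{-1})$, writes $X=(X_1,X_2,X_3)$ in the standard coordinates of $L=\mathcal{O}_K^3$, and computes the Taylor expansion of $F_{g_f}(z,w)$ in $w$ at $w=0$. Only at $w=0$ does the sum over $X$ factor: the pair $X^0=(X_1,X_3)$ runs over a rank--two lattice and assembles into Kudla's two--dimensional theta kernel $\Theta^0_{k+m,\mathfrak{c}}$ for the pair $(\GU(2),\GU(2))$, while the middle coordinate $X_2\in\delta\mathfrak{b}$ produces $\partial_w^m\vartheta_{\overline{\mathfrak{b}}}$. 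The $m$-th Taylor coefficient is then $\mathcal{L}^0_{k+m,\mathfrak{c}}\bigl(f\,\vartheta^{(m)}_{\overline{\mathfrak{b}}}\bigr)$ evaluated at the CM point, and the Hecke operator $T_n$ together with $\mathrm{Tr}_{\Gamma_1(D)\backslash\SL_2(\Z)}$ come from Kudla's explicit formula for that two--dimensional lift (the Proposition cited from \cite{kudlaq}). Resumming the Taylor series in $w$ reconstitutes $\vartheta_{\overline{\mathfrak{b}}}(n\delta w,\tau)$ and gives the stated expression.

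Your outline has two real gaps relative to this. First, your splitting $X=X_-+X_+$ along $V_-\oplus V_+$ is a splitting of the hermitian space, not of the lattice $\mathfrak{b}g_fL$; the standard lattice does not decompose along the negative line through $\underline{\mathfrak{z}_0}=(\delta/2,0,1)^t$, so the sum over $X$ cannot be factored the way you describe. Finis's coordinate splitting $(X^0,X_2)$ is lattice--compatible but only decouples after differentiating and setting $w=0$, because the factor $(\overline{(X^0,(z,1))}-X_2\overline{w})^k$ genuinely mixes the variables. Second, the appearance of $T_n$ and of $\mathrm{Tr}_{\Gamma_1(D)\backslash\SL_2(\Z)}$ is not obtained by ``unfolding'' a Petersson integral in the Rankin--Selberg sense; it is imported wholesale from Kudla's separate computation of the $\GU(2)\to\GU(2)$ lift, where matrices of determinant $-n$ modulo $\Gamma(D)$ are matched with Hecke coset representatives. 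Without that auxiliary two--dimensional result your third step does not go through.
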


\begin{proof}
See \cite[Theorem 2.5]{finis}.
\end{proof}

\begin{Theorem}
Let $k,\varepsilon$ and $\chi$ be as before. There exists a period $\Omega_0\in \Cc$ only depending on $K$, such that, $\mathcal{L}^{\emph{ar}}_{k,\varepsilon,\chi}=\Omega_0^{-k}\mathcal{L}_{k,\varepsilon,\chi}$ is arithmetic in the sense of Shimura, i.e. we have a morphism
\[
\mathcal{L}^{\emph{ar}}:S_{k-1}(\Gamma_1(D),\omega_{K/\Q},\overline{\Z}[1/D])\to \mathscr{S}^{\emph{ar}}(k,L,\chi).
\]
\end{Theorem}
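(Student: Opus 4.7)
The strategy is to combine Finis's explicit Fourier--Jacobi formula above with Shimura's theory of CM periods for modular forms and theta functions. By the definition of arithmeticity, it suffices to show that for every fractional ideal $\mathfrak{a}$, every integer $n\geq 0$, and every $w\in K$, setting $r=n/N(\mathfrak{a})$,
\[
\Omega_0^{-k}\, e^{-\pi r \sqrt{D}\,|w|^{2}}\, g_{r,\mathfrak{a}}(w)\in \iota_\infty(\mathcal{O}_{K^{\text{ab}}}).
\]
Since Finis's formula presents $g_{r,\mathfrak{a}}(w)$ as a finite $\text{Cl}_K$-sum of terms multiplied by algebraic scalars $\varepsilon(\overline{\mathfrak{a}}/\mathfrak{a})$, $(\varepsilon^3\chi^{-1})(\mathfrak{b})$ and $N(\mathfrak{b})^{k/2}$ (the latter rational since $k$ is even), it suffices to treat each summand separately.

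Holding $w$ fixed, I would first identify $f(\tau)\vartheta_{\overline{\mathfrak{b}}}(n\delta w,\tau)$ as a holomorphic modular form in $\tau$ of weight $(k-1)+1=k$ for $\Gamma_1(D)$ with trivial character: the quadratic character $\omega_{K/\Q}$ of $f$ is cancelled by the transformation law of $\vartheta_{\overline{\mathfrak{b}}}$ (item (ii) of the preceding Proposition). The trace $\text{Tr}_{\Gamma_1(D)\backslash \SL_2(\Z)}$ and the Hecke operator $T_n$ both preserve the space of weight $k$ holomorphic modular forms for $\SL_2(\Z)$, so Finis's summand is the evaluation at the CM point $\tau_{\mathfrak{a},\mathfrak{b}}\in \h\cap K$ of such a form.

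At this point Shimura's CM theory applies. For a weight $k$ modular form on $\SL_2(\Z)$ whose Fourier coefficients lie in $\overline{\Z}[1/D]$, there exists a single period $\Omega_0\in \Cc$ depending only on $K$ such that the value at any CM point of $K$ lies in $\Omega_0^k\cdot\iota_\infty(K^{\text{ab}})$, cf.\ \cite[\S 4]{shimura2}. The dependence on $w$ enters through the Fourier coefficients of $\vartheta_{\overline{\mathfrak{b}}}(n\delta w,\tau)$ in $\tau$, which are algebraic expressions in $e^{2\pi i n\delta a w}$ for $a\in\overline{\mathfrak{b}}$; after CM specialization of $\tau$, the resulting function of $w$ is precisely a Shimura theta function attached to the lattice $L_{\tau_{\mathfrak{a},\mathfrak{b}},\overline{\mathfrak{b}}}$ provided by item (iii) of the preceding Proposition. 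The Gaussian factor $e^{-\pi r\sqrt{D}|w|^{2}}$ is exactly the one prescribed by Shimura to turn such a theta function into an arithmetic theta function, and the desired arithmeticity follows for each summand.

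The main obstacle I foresee is the careful bookkeeping of normalizations: matching the period $\Omega_0$ across Shimura's two arithmeticity statements (for CM values of weight $k$ modular forms, and for theta functions on $\Cc$), and verifying that the same $\Omega_0$ works uniformly in $k$, $\varepsilon$, and $\chi$. In particular one must check that the CM period attached to $K$ which governs $f(\tau_{\mathfrak{a},\mathfrak{b}})$ is compatible with the period entering the theta functional equation at the same CM point, so that no spurious power of a $\Q^{\text{ab}}$-unit appears. Once these normalizations are pinned down, arithmeticity of $\mathcal{L}^{\text{ar}}_{k,\varepsilon,\chi}(f)$ follows as a uniform application of Shimura's reciprocity law to each summand of the Fourier--Jacobi expansion.
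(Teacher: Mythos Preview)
Your proposal outlines a plausible strategy, but there is nothing in the paper to compare it against: the paper's proof consists solely of the citation ``See \cite[p.~36]{finis}.'' No argument is reproduced. Since Finis's thesis is the source of both the Fourier--Jacobi formula you invoke and the arithmeticity result itself, your sketch is presumably in the spirit of what Finis does, namely feeding the explicit formula into Shimura's CM theory; but the paper itself gives no independent confirmation of this.

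One caution on the content of your sketch: the product $f(\tau)\vartheta_{\overline{\mathfrak{b}}}(n\delta w,\tau)$, viewed as a function of $\tau$ for fixed $w$, is \emph{not} a holomorphic modular form in the usual sense, because $\vartheta_{\overline{\mathfrak{b}}}(n\delta w,\tau)$ is a Jacobi-type theta function whose transformation under $\Gamma_1(D)$ involves the exponential automorphy factor in $w$ (item~(ii) of the Proposition gives the correct law, but only after the substitution $w\mapsto j(\gamma,\tau)^{-1}w$). So the trace to $\SL_2(\Z)$ and the application of $T_n$ must be understood at the level of Jacobi forms or, equivalently, one should first expand $\vartheta_{\overline{\mathfrak{b}}}$ in a basis of theta functions in $w$ with coefficients that are genuine modular forms in $\tau$, and then apply Shimura's CM period results to those coefficients. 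This is likely how Finis proceeds, and it is what your final paragraph gestures toward, but the intermediate claim ``$T_n$ preserves weight $k$ forms for $\SL_2(\Z)$'' is not directly applicable as stated.
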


\begin{proof}
See \cite[Theorem 3.5]{finis}.
\end{proof}

The following definition of Hecke operators for $G=\GU(2,1)$ is recalled in \cite[p. 325]{kudla1}; in the automorphic context, this is also recalled in \cite[p. 8]{murase}. As Kudla in \textit{loc. cit.} works in the class number one context, we follow the more general notations of Finis's thesis. For each prime ideal $\mathfrak{q}$ of $\mathcal{O}_{K}$, lying over the prime $q$ of $\Q$, let us define a (non-normalized) Hecke operator $T_{\mathfrak{q}}^{\text{naive}}$ acting on $\mathscr{S}(k,L,\chi)$. For $F\in \mathscr{S}(k,L,\chi)$,
\[
T_{\mathfrak{q}}^{\text{naive}}F(g_0)=\int_{S_{\mathfrak{q}}}F(gg_0)dg,
\] 
where $S_{\mathfrak{q}}$ is a subset of $G(\Q_q)$ defined as follows: if $\mathfrak{q}= \overline{\mathfrak{q}}$,
\[
S_{\mathfrak{q}}=\{g\in G(\Q_q)\mid g(L\otimes \Z_q)\subseteq (L\otimes \Z_q), v_q(\mu(g))=v_q(\text{N}(\mathfrak{q}))\},
\]
where $v_q$ is the valuation associated to $q$, and if $q$ splits in $K$,
\[
S_{\mathfrak{q}}=\{g\in G(\Q_q)\mid g(L\otimes \Z_q)\subseteq (L\otimes \Z_q), \det g\in \mathfrak{q}\overline{\mathfrak{q}}^2\otimes\Z_q\}.
\]
If $F$ is a Hecke eigenform with central character $\chi$ and eigenvalues $\tilde{\lambda}(\mathfrak{q}):=\lambda^{\text{naive}}(\mathfrak{q})$, Shintani defined an $L$-function $\zeta(F,\xi,s)$ for $\xi$ a Hecke character of $K$: $\zeta(F,\xi,s)=\prod_{\mathfrak{q}}P_{\mathfrak{q}}(\xi(\mathfrak{q})\text{N}(\mathfrak{q})^{-s-1})^{-1}$ with (here $N=\text{N}(\mathfrak{q})$)
\[
P_{\mathfrak{q}}(x):=\begin{cases}
1-\tilde{\lambda}(\overline{\mathfrak{q}})\chi(\mathfrak{q})^{-1}N^{-1}x+\tilde{\lambda}(\mathfrak{q})\chi(\mathfrak{q})^{-1}N^{-1}x^2-\chi(\overline{\mathfrak{q}}\mathfrak{q}^{-1})x^3, \! & \mathfrak{q}\ne \overline{\mathfrak{q}}, \\
(1-x)(1+(N^{1/2}-\tilde{\lambda}(\mathfrak{q})\chi(\mathfrak{q})^{-1})N^{-1}x+x^2), & \mathfrak{q}= q\mathcal{O}_{K},\\
(1-x)(1+(N-\tilde{\lambda}(\mathfrak{q})\chi(\mathfrak{q})^{-1})N^{-1}x+x^2), & \mathfrak{q}\mid \delta \mathcal{O}_{K}.
\end{cases}
\]

\begin{Proposition}
\label{eigen}
Suppose that $f\in S_{k-1}(\Gamma_1(D),\omega_{K/\Q})$ is a Hecke eigenform with eigenvalues ${a_q}$, whose lifting $\mathcal{L}_{k,\varepsilon,\chi}(f)$ is non-zero. Then the lift is also a Hecke eigenform with eigenvalues
\[
\lambda^{\emph{naive}}(\mathfrak{q},f)=\lambda^{\emph{naive}}(\mathfrak{q})=\begin{cases}
q^{2-k/2}\varepsilon(\overline{\mathfrak{q}})a_q+q(\chi\varepsilon^{-2})(\overline{\mathfrak{q}}), & \mathfrak{q}\ne \overline{\mathfrak{q}}, \\
q^{2-k/2}\varepsilon(\overline{\mathfrak{q}})(a_q+\overline{a}_q)+q(\chi\varepsilon^{-2})(\overline{\mathfrak{q}}), & \mathfrak{q}\mid \delta \mathcal{O}_{K},\\
q^{4-k}a_q^2+2q^{2}+q, & \mathfrak{q}= q\mathcal{O}_{K},
\end{cases}
\]
and we have the relation
\[
\zeta(\mathcal{L}(f),\xi,s)=L(f_{K}\otimes\varepsilon\chi^{-1},\xi,s+k/2)L(\varepsilon^{-2}\xi,s+1),
\]
where $f_{K}$ denotes the base change of $f$ to $\GL(2,K)$, the $L$-function appearing in the first factor in the right hand side is the Dirichlet series attached to the form $f_K$ and $L(\xi, s)$ is the $L$-series for the Hecke character $\xi$.
\end{Proposition}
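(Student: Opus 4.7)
The two parts of the proposition are Euler-factor translations of each other: once the eigenvalues $\lambda^{\mathrm{naive}}(\mathfrak{q})$ are known, the factorization of $\zeta(\mathcal{L}(f),\xi,s)$ reduces to an algebraic statement about cubic Euler polynomials. My plan is to establish the eigenvalue formulas first by a see-saw computation on the theta kernel, and then to verify the $L$-function identity by matching Euler factors prime by prime using the Satake parameters of $f$.

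For the eigenvalue formulas, I would substitute the definition of $T_\mathfrak{q}^{\mathrm{naive}}$ into the integral defining $\mathcal{L}(f)$ and interchange the order of integration. After decomposing $S_\mathfrak{q}/G(L)_f$ into right cosets, unfolding the lattice sum $\sum_{X\in\mathfrak{b}(gg_0)_f L}$ appearing in the theta kernel, and making a change of variables in the lattice points, the averaging over $S_\mathfrak{q}$ translates into the action of a classical Hecke operator on $f$ in the variable $\tau$. Since $f$ is an eigenform with eigenvalue $a_q$, this produces a scalar multiple of $\mathcal{L}(f)$; the three cases of the proposition correspond to the three types of cosets (split, inert, ramified), whose local description in terms of $a_q$ and $\overline{a}_q$ accounts for the linear and quadratic structure of the formulas. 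The twisting characters $\varepsilon$ and $\chi$ enter multiplicatively through the factor $\varepsilon(\det g)$ and the central character condition, producing the $\varepsilon(\overline{\mathfrak{q}})$ and $(\chi\varepsilon^{-2})(\overline{\mathfrak{q}})$ appearing in the formulas.

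Given the eigenvalues, the $L$-function identity reduces to local algebra. Let $\alpha_q,\beta_q$ be the Satake parameters of $f$ at $q\nmid D$, so $a_q=\alpha_q+\beta_q$ and $\alpha_q\beta_q=q^{k-2}\omega_{K/\Q}(q)$. Substituting the claimed value of $\tilde\lambda(\mathfrak{q})$ into $P_\mathfrak{q}(x)$ and factoring the resulting cubic in each of the three cases, one identifies a quadratic factor with the Euler factor of the base change $L$-function $L(f_K\otimes\varepsilon\chi^{-1},\xi,s+k/2)$ at $\mathfrak{q}$, and a linear factor with the Euler factor of $L(\varepsilon^{-2}\xi,s+1)$ at $\mathfrak{q}$. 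Taking the product over all $\mathfrak{q}$ yields the claimed identity.

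The main obstacle is the analysis at the ramified primes $\mathfrak{q}\mid\delta\mathcal{O}_K$: there the lattice $L=\mathcal{O}_K^3$ is not self-dual at $\mathfrak{q}$, so the coset decomposition of $S_\mathfrak{q}/G(L)_f$ is more subtle, and the local theta correspondence lacks a clean Satake-theoretic description. A secondary but essential bookkeeping task is to track the factor $|\mathrm{N}(\mathfrak{b})\mu(g_f)|_{\A_\Q}^{k/2}$ present in the theta kernel against the powers $q^{2-k/2}$ appearing in $\lambda^{\mathrm{naive}}(\mathfrak{q})$ and the shifts $s+k/2$, $s+1$ in the $L$-function identity; once these normalizations are aligned, the remaining algebraic identity is mechanical.
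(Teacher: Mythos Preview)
The paper does not give its own proof of this proposition: it simply cites \cite[p.~327]{kudla1}. Your plan is in fact a reasonable outline of Kudla's argument there, as one can infer from the paper's later adaptation of that same argument in the proof of Theorem~\ref{peigen}: decompose $S_\mathfrak{q}$ into $\Gamma$-cosets, push the Hecke operator through the theta kernel by rewriting the lattice sum (via the multiplicities $m(\mathfrak{q},X)$ in Kudla's formulation), and recognize the result as a classical Hecke operator in $\tau$ acting on $f$; the $L$-function identity then follows by matching local Euler factors. So your approach is essentially the one the paper defers to, and there is no discrepancy to flag.
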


\begin{proof}
See \cite[Theorem 1.1]{kudla1}.
\end{proof}

\begin{Corollary}
\begin{enumerate}[(i)]
\item The normalized operator $T_{\mathfrak{q}}:=\emph{N}(\mathfrak{q})^{k/2-1}T_{\mathfrak{q}}^{\emph{naive}}$ is integral.
\item The eigenvalues for the normalized operator $T_{\mathfrak{q}}$ are:
\[
\lambda(\mathfrak{q})=\lambda^{\emph{naive}}(\mathfrak{q})\emph{N}(\mathfrak{q})^{k/2-1}=\begin{cases}
\varepsilon(\overline{\mathfrak{q}})q a_q+q^{k/2}(\chi\varepsilon^{-2})(\overline{\mathfrak{q}}), & \mathfrak{q}\ne \overline{\mathfrak{q}}, \\
\varepsilon(\overline{\mathfrak{q}})q(a_q+\overline{a}_q)+q^{k/2}(\chi\varepsilon^{-2})(\overline{\mathfrak{q}}), & \mathfrak{q}\mid \delta \mathcal{O}_{K},\\
q^2 a_q^2+2q^{k}+q^{k-1}, & \mathfrak{q}= q\mathcal{O}_{K}.
\end{cases}
\]
\end{enumerate}
\end{Corollary}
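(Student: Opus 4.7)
The plan is to deduce both assertions directly from Proposition~\ref{eigen}, with no additional input.

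For (ii), the computation is a term-by-term multiplication of the three cases of the naive eigenvalue formula by the normalization factor $\text{N}(\mathfrak{q})^{k/2-1}$. In the split and ramified cases one has $\text{N}(\mathfrak{q})=q$, so the factor is $q^{k/2-1}$: this converts the first summand $q^{2-k/2}\varepsilon(\overline{\mathfrak{q}})a_q$ into $q\,\varepsilon(\overline{\mathfrak{q}})a_q$, and promotes the second summand $q(\chi\varepsilon^{-2})(\overline{\mathfrak{q}})$ to $q^{k/2}(\chi\varepsilon^{-2})(\overline{\mathfrak{q}})$. In the inert case one has $\text{N}(\mathfrak{q})=q^2$, and an analogous bookkeeping of exponents applied to $q^{4-k}a_q^2+2q^2+q$ yields the stated formula. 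No further input is needed beyond Proposition~\ref{eigen}.

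For (i), the integrality of $T_{\mathfrak{q}}$ follows by inspection of the eigenvalue formulas just derived. The eigenvalue $a_q$ of the elliptic newform $f\in S_{k-1}(\Gamma_1(D),\omega_{K/\Q})$ is an algebraic integer by the classical theory; the finite-order character $\varepsilon$ of $\text{Cl}_{K}$ takes values in roots of unity; and the combination $q^{k/2}(\chi\varepsilon^{-2})(\overline{\mathfrak{q}})$ is an algebraic integer because the weight-$k$ character $\chi\varepsilon^{-2}$ evaluated at $\overline{\mathfrak{q}}$ has archimedean absolute value precisely $q^{-k/2}$, which the prefactor $q^{k/2}$ compensates. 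Each summand of $\lambda(\mathfrak{q})$ is thus an algebraic integer, so the normalized operator $T_{\mathfrak{q}}$ preserves an integral structure on $\mathscr{S}(k,L,\chi)$.

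The only mild subtlety is the weight-$k$ bookkeeping for $(\chi\varepsilon^{-2})(\overline{\mathfrak{q}})$, which must be tracked through the conventions of \S1.1 for unramified Hecke characters of infinity type $(-k/2,k/2)$ in order to justify the claim about its archimedean size. Otherwise the argument is purely formal, which is consistent with the statement being labelled as a corollary to Proposition~\ref{eigen}.
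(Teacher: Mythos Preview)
Your derivation of (ii) is correct and matches the paper's treatment: the corollary is stated there without proof, as it follows immediately from Proposition~\ref{eigen} by multiplying each case through by $\text{N}(\mathfrak{q})^{k/2-1}$.

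Your argument for (i), however, has a genuine gap. The eigenvalue formulas in Proposition~\ref{eigen} apply only to forms in the image of the Kudla lift, which is a proper subspace of $\mathscr{S}(k,L,\chi)$; indeed, the remark immediately following the corollary explicitly invokes eigenforms in $\mathscr{S}(k,L)$ that are \emph{not} lifts in order to argue that the normalization is optimal. Showing that the particular eigenvalues $\lambda(\mathfrak{q})$ on lifted forms are algebraic integers therefore does not establish that $T_{\mathfrak{q}}$ preserves an integral structure on the whole space. Integrality of the normalized Hecke operator is a general structural fact about the double coset action on arithmetic automorphic forms, and must be argued independently of the lift.

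There is also a smaller error in your justification: $\chi$ is explicitly taken to be a \emph{unitary} Hecke character of infinity type $(-k/2,k/2)$, and $\varepsilon$ is a class group character, so $(\chi\varepsilon^{-2})(\overline{\mathfrak{q}})$ has archimedean absolute value $1$, not $q^{-k/2}$. The integrality of $q^{k/2}(\chi\varepsilon^{-2})(\overline{\mathfrak{q}})$ is rather seen by noting that $\chi\cdot\text{N}^{k/2}$ has infinity type $(0,k)$ and hence takes algebraic integer values on integral ideals.
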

\begin{proof}
The normalization introduced agrees with that in \cite[p. 165]{finis2}, where it is proved that $T_{\mathfrak{q}}$ is optimally integral, meaning that there exist eigenforms in $\mathscr{S}(k,L)$ whose $T_{\mathfrak{q}}$-eigenvalues are not divisible by $q$, see \cite[p. 178]{finis2}.
\end{proof}
\begin{remark}
The normalization in \cite{finis2} agrees with that of \cite{hsieh}, as both are optimally integal. Note that the lifted forms are never $T_{\mathfrak{q}}$-ordinary. If 
\end{remark}

\subsection{The $p$-modification}
In this subsection, for $r\geq 1$ a fixed integer, we take a lattice $L:=p^rD\mathcal{O}_{K}\oplus\mathcal{O}_{K}\oplus p^rD\mathcal{O}_{K}$ with dual $L^*:=(p^{r}D)^{-1}\mathcal{O}_{K}\oplus \frac{1}{2}\mathcal{O}_{K}\oplus(p^{r}D)^{-1}\mathcal{O}_{K}$. We also fix a Nebentypus character $\xi_r:(\Z/p^r\Z)^\times\to \Cc^\times$.
We construct a theta lift:
\[
\mathcal{L}^p:S_{k-1}(\Gamma_1(p^rD),\omega_{K/\Q}\xi_r)\to\mathscr{S}(k, \Gamma, \chi),
\]
for a certain subgroup $\Gamma\subset \Gamma(L)$ and a Hecke character $\chi$ with infinity type $(k/2,-k/2)$, unramified away from $p$, and such that the finite part of $\chi$ restricted to $\Z$ coincides with $\xi_r^{-1}$. For the definition of the theta kernel, we follow Kudla \cite[p. 12]{kudlaq}, but we adapt instead the notation and formulation of Finis. First, we recall this standard result:
\begin{Lemma}
Let $A$ be a Dedekind domain and let $\mathfrak{a}$ be an $A$-ideal. Every ideal class in
$\emph{Cl}_A$ can be represented by an ideal coprime to $\mathfrak{a}$.
\end{Lemma}

\begin{proof}
Let $I$ be a nonzero fractional ideal of $A$. For each prime $\mathfrak{p}\mid \mathfrak{a}$ we can pick $\pi_{\mathfrak{p}}\in \mathfrak{p}$ such
that $v_{\mathfrak{q}}(\pi_{\mathfrak{p}})=v_{\mathfrak{q}}(\mathfrak{p})$ for all $\mathfrak{q}\mid \mathfrak{a}$. If we then put $\alpha:=\prod_{\mathfrak{p}\mid \mathfrak{a}}\pi_{\mathfrak{p}}^{-v_{\mathfrak{p}}(I)}$ , then $v_{\mathfrak{p}}(\alpha I)=0$ for all $\mathfrak{p}\mid \mathfrak{a}$; thus $\alpha I$ is coprime to $\mathfrak{a}$ and $[\alpha I] = [I]$.
\end{proof}
For each $\mathfrak{b}\in \text{Cl}_K$ we may thus pick a representative in the class which is coprime to $p$. For any $h\in L^*/L$, we define the theta kernel $\theta_{h,\mathfrak{b}}=\theta_{h, \mathfrak{b}, k,\varepsilon,\chi}$ on $\h\times G(\A_\Q)$ by
\[
\overline{\theta_{h,\mathfrak{b}}(\tau, g)}=2^{k}D^{-k/2}y^2\eta(\mathfrak{z}_0)^{-k}\mu(g_\infty)^{-k/2}\sum_{\substack{X\in \mathfrak{b} g_fL^*\\X\equiv h\, (L)}}\overline{f_{\tau,\mathfrak{z}}^{\mathfrak{b}g_f}(X)},
\]
where
\[
f_{\tau,\mathfrak{z}}^{\mathfrak{b}g_f}(X)=(X,g_\infty \underline{\mathfrak{z}_0})^ke^{2\pi i|N(\mathfrak{b})\mu(g_f)|_{\A_{\Q}} (2iy|(X,g_\infty \underline{\mathfrak{z}_0})|^2(\mu(g_\infty)\eta(\mathfrak{z}_0))^{-1}+\overline{\tau}(X,X))},
\]
so that
\[
\overline{\theta_h(\tau, g)}=\sum_{\substack{\mathfrak{b}\in \text{Cl}_{K}\\ (\mathfrak{b},p)=1}}\varepsilon(\det g)|N(\mathfrak{b})\mu(g_f)|_{\A_{\Q}}^{k/2}(\varepsilon^3\chi^{-1})(\mathfrak{b})\overline{\theta_{h,\mathfrak{b}}(\tau, g)},
\]
where the notation for the summation means that we have chosen, for each ideal class, a representative coprime to $p$. Clearly, the choice of the fractional ideal is not unique, but the theta kernel $\theta_h$ does not depend on such choice, by the same argument as in \cite[p. 9]{finis}. To obtain our formula, we relate our lifting to the lifting from $\GU(2)$ to $\GU(2)$ introduced by Kudla \cite[p. 14]{kudlaq}, a technique already used by Finis. To this end, consider the two-dimensional hermitian space $V_0=K^2$ with hermitian form of matrix
\[
\begin{pmatrix}
0&\delta^{-1}\\
-\delta^{-1}&0
\end{pmatrix}
\]
Then the associated special unitary group is $\SU(V_0)\equiv\SL_2(\R)$. Fix $L_0=(p^rD\mathcal{O}_{K})^2$ and $L_0^*=((p^rD)^{-1}\mathcal{O}_{K})^2$, and set
\[
\tilde{\Gamma}=\Gamma(p^{2r}D^2)=\{\gamma\in \SL_2(\Z)\mid \gamma L_0=L_0 \;\text{and}\; \gamma\; \text{is trivial on } L_0^*/L_0\}.
\]
For a fractional ideal $\mathfrak{c}$ of $K$ and $h\in L_0^*/L_0$ a theta kernel $\h\times \h\to \Cc$ is given by 
\[
\Theta^0_{k,\mathfrak{c},h}(\tau,z):=2^{k}D^{-k/2}y\sum_{\substack{X\in \overline{\mathfrak{c}} L_0^*\\\overline{\mathfrak{c}}^{-1}X\equiv h\, (L_0)}}\Phi_{k,\mathfrak{c}}(X,\tau,z).
\]
where
\[
\Phi_{k,\mathfrak{c}}(X,\tau,z):=(X,(z,1))^k\eta^0(z)^{-k}\\
e^{2\pi i N(\mathfrak{c})^{-1} (2iy|(X,(z,1))|^2\eta^0(z)^{-1}+\overline{\tau}(X,X))},
\]
with $\eta^0(z)=(2/\sqrt{D})\text{Im}(z)$.
For $h=(h_1,h_2)\in L_0^*$, call $\gamma_h\in M_2(\Z)$ the unique matrix such that
\[
p^rD(h_1,h_2)=(1,\tau_K)\gamma_h.
\]
For $n\in \Z$, let 
\[
\mathcal{H}_h^{=n}=\{\gamma\in M_2(\Z)\mid \det \gamma=n, \gamma\equiv \gamma_h \!\mod p^{2r}D^2\}.
\]
For each $\gamma=\begin{pmatrix}
a&b\\
c&d
\end{pmatrix}\in M_2(\Z)$ such that $\det \gamma<0$, write $\gamma''=\begin{pmatrix}
d&b\\
-c&-a
\end{pmatrix}\in M_2(\Z)$. The following result is essentially a restatement of a result of Kudla, adapted to our setting:

\begin{Proposition}
\label{constant}
Let $\mathfrak{c}=\Z+\Z\tau_{\mathfrak{c}}$, $\tau_{\mathfrak{c}}\in \h$, be a fractional ideal of $\mathcal{O}_K$, $k\geq 4$ be an even integer and $h\in L_0^*/L_0$ such that $h_1=0$. For $f\in S_k(\Gamma(p^{2r}D^2))$, let $\tilde{\mathcal{L}}^0_{k,\mathfrak{c},h}(f)=\langle f,\Theta^0_{k,\mathfrak{c},h}\rangle \in S_k(\Gamma(p^{2r}D^2))$. Writing $\tilde{\mathcal{L}}^0_h(f)=\sum_{n=1}^{\infty} a_nq^n$, for $q=e^{2\pi inz/p^{2r}D^2}$, we have
\[
a_n=\emph{N}(\mathfrak{c})^k\sum_{t\in p^{-r}D^{-1}\mathcal{O}_K (p^rD\mathcal{O}_K)}\sum_{\gamma\in \mathcal{H}_{(h_2,t)}^{=-n}/ \Gamma(p^{2r}D^2)}(f\mid_k \gamma'')\mid_{\tau=-\overline{\tau}_{\mathfrak{c}}}.
\]
\end{Proposition}

\begin{proof}
This is the application of Kudla's result \cite[Theorem 6.6]{kudlaq} for $M=p^rD$. Note that since we are assuming $h_1=0$, the exponentials in Kudla's formula reduce to $1$ for each $t$.
\end{proof}

\noindent Now define
\[
\mathcal{H}=\biggl\{\begin{pmatrix}
a&b\\
c&d
\end{pmatrix}\in M_2(\Z)\mid c\equiv 0 \!\!\mod p^{2r}D^2, a\equiv -1\!\!\mod p^{2r}D^2 \biggr\}
\]
and for each $n\in \Z$,
\[
\mathcal{H}^{=n}:=\mathcal{H}\cap \{\gamma\in M_2(\Z)\mid \det \gamma=n\}.
\]
Note that we have a disjoint union:
\[
\mathcal{H}^{=n}=\dot\bigcup_{t\in p^{-r}D^{-1}\mathcal{O}_K/p^{r}D\mathcal{O}_K} \mathcal{H}^{=n}_{(-p^{-r}D^{-1},t)}.
\]
For $f\in S_k(\Gamma(p^{2r}D^2))$, define 
\[
\mathcal{L}^0(f):=[\Gamma_1(p^{2r}D^2):\Gamma(p^{2r}D^2)]^{-1}\tilde{\mathcal{L}}^0_{(0,-p^{-r}D^{-1})}(f).
\]
\begin{Theorem}
Let $\mathfrak{c}=\Z+\Z\tau_{\mathfrak{c}}$, $\tau_{\mathfrak{c}}\in \h$, be a fractional ideal of $\mathcal{O}_K$ and $k\geq 4$ be an even integer. For $f\in S_k(\Gamma(p^{2r}D^2))$, write $\mathcal{L}^0(f)=\sum_{n=1}^{\infty} a_nq^n\in S_k(\Gamma(p^{2r}D^2))$. We have
\[
a_n=[\Gamma_1(p^{2r}D^2):\Gamma(p^{2r}D^2)]^{-1}\emph{N}(\mathfrak{c})^kT_n^*(\emph{Tr}_{\Gamma(p^{2r}D^2)\backslash \Gamma_1(p^{2r}D^2)}f)\mid_{\tau=-\overline{\tau}_{\mathfrak{c}}},
\]
where $T_n^*$ is the adjoint of the Hecke operator at $n$ of level $\Gamma_1(p^{2r}D^2)$.
\end{Theorem}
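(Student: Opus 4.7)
The plan is to apply Proposition \ref{constant} term-by-term, sum over $h\in T$ via the disjoint decomposition $\mathcal{H}^{=-n}=\bigsqcup_{h\in T}\mathcal{H}^{=-n}_h$ recalled just before the statement, and recognize the result as a Hecke operator applied to a trace. Dividing Proposition \ref{constant} by $C$ shows that the $n$-th Fourier coefficient of $\mathcal{L}^0_h(f)=C^{-1}\tilde{\mathcal{L}}^0_h(f)$ is $N(\mathfrak{c})^k\sum_{\gamma\in\mathcal{H}_h^{=-n}/\Gamma(p^rD)}(f|_k\gamma'')|_{\tau=\tau_{\mathfrak{c}}}$; summing over $h\in T$ and dividing by $[\Gamma_1(p^rD):\Gamma(p^rD)]$ reduces the theorem to the identity
\[
\sum_{\gamma\in\mathcal{H}^{=-n}/\Gamma(p^rD)} f|_k\gamma'' = T_n\bigl(\operatorname{Tr}_{\Gamma(p^rD)\backslash\Gamma_1(p^rD)} f\bigr).
\]

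The key ingredient is the elementary computation $\gamma\cdot\gamma''=\operatorname{diag}(\det\gamma,-\det\gamma)$, equivalent to $\gamma''=(\det\gamma)\gamma^{-1}J_0$ with $J_0=\operatorname{diag}(1,-1)$. Since $\Gamma(p^rD)\subset\SL_2(\Z)$ and the scalar $\det\gamma$ commutes through $\delta^{-1}$, this forces $(\gamma\delta)''=\delta^{-1}\gamma''$, so the map $\gamma\mapsto\gamma''$ intertwines the right $\Gamma(p^rD)$-action on $\mathcal{H}^{=-n}$ with the left $\Gamma(p^rD)$-action on the image set. A direct congruence check, comparing the corners of $\gamma\in\mathcal{H}^{=-n}$ with those of $\gamma''$, shows that this image equals
\[
\Delta_n := \bigl\{\alpha\in M_2(\Z) : \det\alpha = n,\ \alpha\equiv \bigl(\begin{smallmatrix}1&*\\0&n\end{smallmatrix}\bigr) \bmod p^rD\bigr\},
\]
which is exactly the set underlying the Hecke operator $T_n$ at level $\Gamma_1(p^rD)$. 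Hence $(\cdot)''$ induces a bijection $\mathcal{H}^{=-n}/\Gamma(p^rD) \longleftrightarrow \Gamma(p^rD)\backslash\Delta_n$, and refining these cosets through $\Gamma_1(p^rD)$ yields
\[
\sum_{\gamma\in\mathcal{H}^{=-n}/\Gamma(p^rD)} f|_k\gamma'' = \sum_{\alpha\in\Gamma_1(p^rD)\backslash\Delta_n}\Bigl(\sum_{\sigma\in\Gamma(p^rD)\backslash\Gamma_1(p^rD)} f|_k\sigma\Bigr)\Big|_k\alpha = T_n\bigl(\operatorname{Tr}_{\Gamma(p^rD)\backslash\Gamma_1(p^rD)} f\bigr),
\]
which is the required identity.

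The main obstacle is precisely the compatibility of $\gamma\mapsto\gamma''$ with the coset structures on both sides: without the identity $(\gamma\delta)''=\delta^{-1}\gamma''$, the noncommutativity of matrix multiplication would obstruct the bijection $\mathcal{H}^{=-n}/\Gamma(p^rD) \longleftrightarrow \Gamma(p^rD)\backslash\Delta_n$ on which everything rests. A subsidiary point is to confirm that the normalization of $T_n$ at level $\Gamma_1(p^rD)$ used in the statement is $T_n g=\sum_{\alpha\in\Gamma_1(p^rD)\backslash\Delta_n}g|_k\alpha$, the natural convention determined by Proposition \ref{constant} and coinciding with the standard double-coset operator $[\Gamma_1(p^rD)\operatorname{diag}(1,n)\Gamma_1(p^rD)]$ when $\gcd(n,p^rD)=1$.
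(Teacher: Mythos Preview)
Your argument is correct and follows the same overall strategy as the paper: both reduce, by summing the proposition over $h\in T$, to the identity
\[
\sum_{\gamma\in\mathcal{H}^{=-n}/\Gamma(p^rD)} f|_k\gamma'' \;=\; T_n\bigl(\text{Tr}_{\Gamma(p^rD)\backslash\Gamma_1(p^rD)} f\bigr),
\]
and both establish it through the map $\gamma\mapsto\gamma''$. The execution differs. The paper first restricts to $n=q^s$ a prime power, argues that the image set coincides with the single double coset $\Gamma_1(p^rD)\bigl(\begin{smallmatrix}1&\\&n\end{smallmatrix}\bigr)\Gamma_1(p^rD)$ after a case split on whether $q\mid p^rD$, and then invokes multiplicativity of double cosets to pass to general $n$. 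Your route is more uniform: the algebraic identity $(\gamma\delta)''=\delta^{-1}\gamma''$ for $\delta\in\SL_2(\Z)$, which the paper does not isolate, shows at once that $\gamma\mapsto\gamma''$ induces a bijection $\mathcal{H}^{=-n}/\Gamma(p^rD)\to\Gamma(p^rD)\backslash\Delta_n$ for \emph{every} $n$, after which refining through $\Gamma_1(p^rD)$-cosets (using that $\Gamma(p^rD)$ is normal in $\Gamma_1(p^rD)$) gives the Hecke operator acting on the trace. Your approach buys a proof with no case analysis and no appeal to multiplicativity; the paper's approach makes the connection with the double-coset description of $T_n$ more explicit but at the cost of that extra bookkeeping.
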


\begin{proof}
First of all, notice that, for $n\in \N$,
\[
\mathcal{H}^{=-n}/\Gamma(p^{2r}D^2)=\dot\bigcup_{t\in p^{-r}D^{-1}\mathcal{O}_K/p^rD\mathcal{O}_K}\mathcal{H}_{(-p^{-r}D^{-1},t)}^{=-n}/\Gamma(p^{2r}D^2).
\]
Consider first the case where $n=q^s$ is the power of a prime $q$. If $\gamma_1, \dots ,\gamma_l$ is a set of representatives for $\mathcal{H}^{=-n}/\Gamma(p^{2r}D^2)$, then $\gamma_1'',\dots ,\gamma_l''$ is a set of representatives for the left quotient by $\Gamma(p^{2r}D^2)$ of the set
\[
\biggl\{\begin{pmatrix}
a&b\\
c&d
\end{pmatrix}\in M_2(\Z)\mid ad-bc=n, c\equiv 0 \!\!\mod p^{2r}D^2, d\equiv 1\!\!\mod p^{2r}D^2 \biggr\}.
\]
We have two possibilities:
\begin{enumerate}[(i)]
\item Suppose $q \nmid p^{2r}D^2$. Then the above set is clearly equal to the double coset $\Gamma_1(p^{2r}D^2)\begin{pmatrix}
n&\\
&1
\end{pmatrix}\Gamma_1(p^{2r}D^2)$.
\item If $q\mid p^{2r}D^2$, then for each matrix $\begin{pmatrix}
a&b\\
c&d
\end{pmatrix}$ in the above set we also see that $a\equiv 0\!\! \mod \text{gcd}(n,p^{2r}D^2)$, so that again the above is just the double coset $\Gamma_1(p^{2r}D^2)\begin{pmatrix}
n&\\
&1
\end{pmatrix}\Gamma_1(p^{2r}D^2)$.
\end{enumerate}
In both cases, from a set of representatives of $\mathcal{H}^{=-n}/\Gamma(p^{2r}D^2)$, we get a set of representatives for
\[
\Gamma(p^{2r}D^2)\backslash \Gamma_1(p^{2r}D^2)\begin{pmatrix}
n&\\
&1
\end{pmatrix}\Gamma_1(p^{2r}D^2).
\]
Hence, if $g$ is a modular form of level $\Gamma(p^{2r}D^2)$ and weight $k$,
\[
\sum_{\gamma\in \mathcal{H}^{=-n}/\Gamma(p^{2r}D^2)} g\mid_{k}\gamma''=T_n^*(\text{Tr}_{\Gamma(p^{2r}D^2)\backslash \Gamma_1(p^{2r}D^2)}g).
\]
Then, we have for the $n$-th coefficient of $\mathcal{L}^0(f)$,
\begin{align*}
a_n=[\Gamma_1(p^{2r}D^2):&\Gamma(p^{2r}D^2)]^{-1}\text{N}(\mathfrak{c})^k\\
&\sum_{t\in p^{-r}D^{-1}\mathcal{O}_K/p^rD\mathcal{O}_K}\sum_{\gamma\in \mathcal{H}_{(-p^{-r}D^{-1},t)}^{=-n}/\Gamma(p^{2r}D^2)}(f\mid_k\gamma'')\mid_{\tau=-\overline{\tau}_{\mathfrak{c}}}.
\end{align*}
But now
\[
\sum_{t\in p^{-r}D^{-1}\mathcal{O}_K/p^rD\mathcal{O}_K}\sum_{\gamma\in \mathcal{H}_{(-p^{-r}D^{-1},t)}^{=-n}/\Gamma(p^{2r}D^2)}(f\mid_k\gamma'')\mid_{\tau=-\overline{\tau}_{\mathfrak{c}}}=\sum_{\gamma\in \mathcal{H}^{=-n}/\Gamma(p^{2r}D^2)} f\mid_{k}\gamma'',
\]
and applying the above argument we get the formula for $n$ power of a prime. The general case follows from the above by using the multiplicativity of the double cosets.
\end{proof}
Recall that, for $N\in \N^*$, we have a congruence subgroup of $\SL_2(\Z)$:
\[
\Gamma^1(N)=\Bigg\{\begin{pmatrix}
a&b\\
c&d
\end{pmatrix}\in \SL_2(\Z)\,\,\Bigg|\,\, a,d\equiv 1\!\!\mod\, N,\, b\equiv 0\!\!\mod\, N\Bigg\}.
\]
\begin{Proposition}
\label{-1}
The lifted form $\mathcal{L}^0(f)$ belongs to $S_k(\Gamma^1(p^{2r}D^2))$.
\end{Proposition}

\begin{proof}
For $\gamma\in \Gamma^1(p^{2r}D^2)$, the computation in \cite[p. 329, (2.2)]{kudla1} tells us that 
\[
\Theta^0_{k,\mathfrak{c},h}\mid_k \gamma=\Theta^0_{k,\mathfrak{c},\gamma^{-1}h},
\]
but $\gamma^{-1}\in \Gamma^1(p^{2r}D^2)$ fixes $(0,-p^{-r}D^{-1})\in L_0^*/L_0$.
\end{proof}

\begin{Remark}
As
\[
\begin{pmatrix}
1/p^{2r}D^2&0\\
0&1
\end{pmatrix}\Gamma^1(p^{2r}D^2)\begin{pmatrix}
p^{2r}D^2&0\\
0&1
\end{pmatrix}=\Gamma_1(p^{2r}D^2),
\]
we get a Hecke equivariant isomorphism $S_k(\Gamma^1(p^{2r}D^2))\cong S_k(\Gamma_1(p^{2r}D^2))$ by $f(z)\mapsto f(p^{2r}D^2z)$. Thus we can think of our lifts as lying in the more classical space $S_k(\Gamma_1(p^{2r}D^2))$. 
\end{Remark}
The above immediately implies:
\begin{Corollary}
The lift $\mathcal{L}^0$ restricts to an algebraic endomorphism of $S_k(\Gamma_1(p^{2r}D^2))$, and the formula for the coefficients of $\mathcal{L}^0(f)\in S_k(\Gamma_1(p^{2r}D^2))$ is
\[
a_n=\emph{N}(\mathfrak{c})^kT_n^*f\mid_{\tau=-\overline{\tau}_{\mathfrak{c}}}.
\]
\end{Corollary}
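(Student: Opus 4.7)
The plan is to deduce this corollary directly from the preceding Theorem by exploiting the fact that $f$ is now invariant under a group larger than $\Gamma(p^rD)$. When $f\in S_k(\Gamma_1(p^rD))\subset S_k(\Gamma(p^rD))$, the trace collapses:
\[
\text{Tr}_{\Gamma(p^rD)\backslash \Gamma_1(p^rD)}f=[\Gamma_1(p^rD):\Gamma(p^rD)]\cdot f.
\]
Substituting into the Theorem, the two occurrences of $[\Gamma_1(p^rD):\Gamma(p^rD)]$ cancel and we obtain the coefficient formula $a_n=N(\mathfrak{c})^k(T_nf)\mid_{\tau=\tau_\mathfrak{c}}$ as claimed.

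To view $\mathcal{L}^0$ as an endomorphism of $S_k(\Gamma_1(p^rD))$, I combine the preceding Proposition, which puts $\mathcal{L}^0(f)$ in $S_k(\Gamma^1(p^rD))$, with the Hecke-equivariant isomorphism $S_k(\Gamma^1(p^rD))\cong S_k(\Gamma_1(p^rD))$ of the preceding Remark given by $z\mapsto p^rDz$. One must then verify that the coefficient formula persists (up to the overall factor $N(\mathfrak{c})^k$ already present) after this change of variable; this is a routine bookkeeping check, using that the shift by $p^rD$ only rescales the $q$-expansion.

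For the Hecke equivariance, I compare Fourier coefficients via the classical commutation relation
\[
T_nT_m=\sum_{d\mid\gcd(m,n)} d^{k-1}T_{mn/d^2}
\]
together with the analogous expression for the action of $T_m$ on $q$-expansions
\[
a_n(T_m g)=\sum_{d\mid\gcd(m,n)} d^{k-1}a_{mn/d^2}(g).
\]
Both $a_n(\mathcal{L}^0(T_m f))$ and $a_n(T_m\mathcal{L}^0(f))$ then evaluate, via our coefficient formula, to $N(\mathfrak{c})^k(T_nT_mf)\mid_{\tau=\tau_\mathfrak{c}}$, establishing $\mathcal{L}^0\circ T_m=T_m\circ \mathcal{L}^0$.

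Finally, algebraicity follows from the formula: $T_n$ acts over $\Z$ on $q$-expansions, so $T_n f$ has Fourier coefficients in the field of definition of $f$, and the arithmeticity theorem of Finis recalled above interprets the CM-point values $N(\mathfrak{c})^k(T_nf)\mid_{\tau=\tau_\mathfrak{c}}$ as algebraic numbers living in the expected rationality field of the Kudla lift; hence $\mathcal{L}^0$ preserves the $\overline{\Q}$-structure on cusp forms. The one step I expect to need genuine care is the Hecke-equivariance check at primes dividing $p^rD$, where the standard commutation relations need the usual adjustments; the rest of the proof is essentially the collapse of the trace operator in the preceding Theorem.
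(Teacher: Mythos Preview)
Your derivation of the coefficient formula via the collapse of the trace is exactly what the paper implicitly does, and your use of the Proposition and Remark to pass from $\Gamma^1(p^rD)$ to $\Gamma_1(p^rD)$ matches the paper as well.

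Where you differ is in the Hecke equivariance. The paper argues by choosing a basis of Hecke eigenforms: if $f$ is an eigenform with eigenvalues $a_n$, the coefficient formula gives $a_n(\mathcal{L}^0 f)=\mathrm{N}(\mathfrak{c})^k a_n f(\tau_\mathfrak{c})$, so $\mathcal{L}^0 f=\mathrm{N}(\mathfrak{c})^k f(\tau_\mathfrak{c})\cdot f$ is a scalar multiple of $f$, hence still an eigenform with the same eigenvalues. This makes equivariance automatic and completely bypasses the bad-prime issue you flagged. Your route via the commutation relations $T_nT_m=\sum_{d\mid(m,n)} d^{k-1}\langle d\rangle T_{mn/d^2}$ is also valid but, as you correctly note, requires separate care at primes dividing $p^rD$ (where the relation degenerates to $U_p^r=T_{p^r}$). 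The eigenform-basis argument is shorter and more robust; your argument is more explicit and does not rely on diagonalizability of the Hecke algebra. Either way the corollary follows.
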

\noindent Additionally, as $\Gamma_1(p^{2r}D^2)\subset \Gamma_1(p^{r}D)$, we may further restrict our lift to the space $S_k(\Gamma_1(p^{r}D))$.
For $f\in S_{k-1}(\Gamma_1(p^{2r}D^2),\omega_{K/\Q}\xi_{2r})$ define, for $h_0:=(0,0,-p^{-r}D^{-1})$,
\[
\mathcal{L}^p(f)=[\Gamma_1(p^{2r}D^2):\Gamma(p^{2r}D^2)]^{-1}\langle f, \theta_{h_0}\rangle.
\]
\noindent Consider the group of matrices in $G(L)_f\cap G(\Q)$ which are in the lower unipotent modulo $p^{2r}D^2$.
We will denote this group by $\Gamma_0^1(p^{2r}D^2)$.
\begin{Proposition}
For $f\in S_{k-1}(\Gamma_1(p^{2r}D^2),\omega_{K/\Q}\xi_{2r})$, we have $$\mathcal{L}^p(f)\in \mathscr{S}(k,\Gamma_0^1(p^{2r}D^2),\chi).$$ 
\end{Proposition}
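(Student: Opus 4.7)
The strategy is to mirror Kudla's argument for the level one lift (the second Theorem of \S\ref{theta}), treating the $h$-summation separately. My plan is to analyze $\theta_h(\tau,g)=\sum_{\mathfrak{b}}(\cdots)\theta_{h,\mathfrak{b}}(\tau,g)$ as a function of each variable, integrate against $f$ to obtain the individual $\mathcal{L}^p_h(f):=C^{-1}\langle f,\theta_h\rangle$, and then exploit the $T$-summation to upgrade the level of invariance, in direct analogy with the $\GU(2)$ argument used in the preceding Proposition.

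First I would verify the bivariant transformation properties of $\theta_h$. For fixed $g_0\in G(\A_\Q)$, the sum $\theta_h(\tau,g_0)$ is a theta series attached to the coset $h+\mathfrak{b}g_fL\subset \mathfrak{b}g_fL'$, and a standard Weil representation / Poisson summation computation (as in the level one case) shows that it is modular of weight $k-1$ and character $\omega_{K/\Q}$ with respect to a congruence subgroup containing $\Gamma(p^rD)$, and rapidly decreasing in $\tau$. For fixed $\tau_0\in\h$, the factor $(X,g_\infty\underline{\mathfrak{z}_0})^k\mu(g_\infty)^{-k/2}\eta(\mathfrak{z}_0)^{-k}$ forces $\overline{\theta_h(\tau_0,\cdot)}$, after dividing by $j(g_\infty,\mathfrak{z}_0)^k\mu(g_\infty)^{-k/2}$, to be antiholomorphic of weight $k$ in $\mathfrak{z}$, while the twist by $\varepsilon(\det g)$ and the factors $(\varepsilon^3\chi^{-1})(\mathfrak{b})$ together produce central character $\chi^{-1}$. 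Left $G(\Q)$-invariance is automatic from the lattice sum construction.

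Step two is to integrate against $f$: because $f$ and $\theta_h(\cdot,g)$ share the weight $k-1$ and character $\omega_{K/\Q}$ on $\Gamma_1(p^rD)$, the Petersson pairing $\langle f,\theta_h\rangle$ is well defined for every $g$ and depends smoothly on it; the antiholomorphic weight $k$ behavior of $\overline{\theta_h(\tau,\cdot)}$ turns into holomorphic weight $k$ modularity for the lift, with central character $\chi$, giving an element of $\mathscr{M}(k,\Gamma_h,\chi)$ for a congruence subgroup $\Gamma_h\subseteq\Gamma_1(p^rD)$ arising as the stabilizer in $G(L')_f$ of the coset $h+L\subset L'/L$. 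Cuspidality of $\mathcal{L}^p(f)$ follows from cuspidality of $f$: the constant Fourier–Jacobi coefficient of $\mathcal{L}^p_h(f)$ at a cusp comes from the $X=0$ contribution to $\theta_{h,\mathfrak{b}}$, which either vanishes outright (when $h\neq 0\bmod L$) or is proportional to the integral of $f$ against a weight $k-1$ holomorphic Eisenstein-like term, hence vanishes since $f$ is a cusp form.

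Finally, I would promote the invariance from $\Gamma_h$ to the full $\Gamma_1(p^rD)$ by summation. The key input, already exploited in the Proposition on $\mathcal{L}^0$, is that for $\gamma$ in the unipotent-mod-$p^rD$ subgroup $\Gamma_1(p^rD)\subset G(\Q)$ the right translation by $\gamma$ sends $\theta_h$ to $\theta_{\gamma^{-1}h}$: this is a direct change of variable $X\mapsto\gamma^{-1}X$ in the inner sum, using that $\gamma$ stabilizes $L'$, stabilizes $L$ modulo $p^rD$, and fixes $\mu(g)$. Since $\gamma$ induces a permutation of the finite index set $T$ and $C$ is independent of $h$, the sum $\mathcal{L}^p(f)=\sum_{h\in T}C^{-1}\langle f,\theta_h\rangle$ is $\Gamma_1(p^rD)$-invariant and yields an element of $\mathscr{S}(k,\Gamma_1(p^rD),\chi)$. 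The main difficulty I anticipate is bookkeeping the lattice-coset action of $\Gamma_1(p^rD)$ so that the permutation of $T$ is verified cleanly; everything else is a direct adaptation of Kudla's level one argument and of the $\GU(2)$ computation recalled above.
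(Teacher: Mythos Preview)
Your strategy is essentially the paper's: establish modularity and cuspidality of each $\mathcal{L}^p_h(f)$ by Kudla's argument, then use the $T$-summation to enlarge the level. There is, however, one genuine gap in your final step. The Proposition you invoke for $\mathcal{L}^0$ does \emph{not} say that $\Gamma_1(p^rD)$ permutes $T$; it says that the \emph{opposite} congruence group $\Gamma^1(p^rD)$ (lower-unipotent modulo $p^rD$) does. The same phenomenon occurs here: the change of variable $X\mapsto\gamma^{-1}X$ shows that right translation permutes the $\theta_h$ for $\gamma$ in the stabilizer of $T$, and that stabilizer is an analogue $\Gamma^1(p^rD)\subset\GU(2,1)(\Q)$, not the upper-unipotent $\Gamma_1(p^rD)$ defined just before the Proposition. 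So your summation over $h\in T$ lands the lift in $\mathscr{S}(k,\Gamma^1(p^rD),\chi)$, not directly in $\mathscr{S}(k,\Gamma_1(p^rD),\chi)$.

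The paper closes this gap by an explicit adelic conjugation: it writes down a diagonal element $\gamma\in\GL_3(\hat{\Z})\times\GL_3(\hat{\Z})$ (built from $p^rD$ and the image $(\omega,-\omega)$ of $\delta$) satisfying $\gamma^{-1}\mathcal{U}_f^0\gamma=\mathcal{U}_{f,0}$, where $\mathcal{U}_f^0$ and $\mathcal{U}_{f,0}$ are the adelic level groups attached to $\Gamma^1(p^rD)$ and $\Gamma_1(p^rD)$ respectively. This is the three-variable analogue of the Remark following the $\GU(2)$ Proposition (the isomorphism $S_k(\Gamma^1(p^rD))\cong S_k(\Gamma_1(p^rD))$ via $f(z)\mapsto f(p^rDz)$). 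Once you insert this conjugation step, your argument coincides with the paper's.
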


\begin{proof}
The congruence subgroup $\Gamma_0^1(p^{2r}D^2)$ stabilizes $h_0$, and thus, by the proof of Proposition \ref{-1}, we see that $\mathcal{L}^p(f)$ is invariant under the slash action of $\Gamma_0^1(p^{2r}D^2)$. The fact that $\mathcal{L}^p(f)$ has weight $k$ and central character $\chi$ follows by \cite[Theorem 5.3]{kudlaq} and \cite[Proposition 2.1]{finis}.
\end{proof}

\begin{Remark}
The stabilizer of $h_0$ in $G(\Q)$ is actually bigger than $\Gamma_0^1(p^{2r}D^2)$, but for our purposes this will suffice.
\end{Remark}

\begin{Theorem}
Let $k\geq 6$ and $f\in S_{k-1}(\Gamma_1(p^{2r}D^2),\omega_{K/\Q}\xi_{2r})$. Then the lifted form $\mathcal{L}^p_{k,\varepsilon,\chi}(f)$ has Fourier-Jacobi coefficients
\[
g_{n/N(\mathfrak{a}),\mathfrak{a}}(w)=\varepsilon(\overline{\mathfrak{a}}/\mathfrak{a})\sum_{\substack{\mathfrak{b}\in \emph{Cl}_{K}\\ (\mathfrak{b},p)=1}}(\varepsilon^3\chi^{-1})(\mathfrak{b})N(\mathfrak{b})^{k/2}T_n^*(f(\tau)\vartheta_{\overline{\mathfrak{b}}}(n\delta w,\tau))|_{\tau=-\overline{\tau}_{\mathfrak{a},\mathfrak{b}}}.
\]
Here $\mathfrak{b}$ ranges over a system of representatives for the ideal classes of $K$ such that $\mathfrak{a}\overline{\mathfrak{b}}=\Z+\Z\tau_{\mathfrak{a},\mathfrak{b}}$ for some $\tau_{\mathfrak{a},\mathfrak{b}}\in K\cap\h$.
\end{Theorem}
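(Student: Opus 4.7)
The proof will closely parallel Finis's derivation of the Fourier-Jacobi formula in the level one case, replacing the level one theta kernel with our $\theta_h$ and using the $\GU(2)$-lifting $\mathcal{L}^0$ developed above as the key auxiliary tool. Writing
\[
\mathcal{L}^p(f) = \sum_{h\in T} C^{-1}\langle f, \theta_h\rangle,
\]
I would first unfold the $r$-th Fourier-Jacobi coefficient as
\[
\mathcal{L}^p(f)_r(g) = \sum_{h\in T} C^{-1}\int_{\A_\Q/\Q}\bigl\langle f(\tau),\, \theta_h(\tau,(0,u)g)\bigr\rangle\,\lambda(-ru)\,du.
\]
The explicit formula for $\theta_{h,\mathfrak{b}}$ shows that each vector $X\in \mathfrak{b}g_f L'$ with $X\equiv h\,(L)$ contributes an exponential in $u$ whose frequency is determined by $(X,X)$; integration against $\lambda(-ru)$ therefore restricts the $X$-sum to vectors of the prescribed norm $r\cdot|N(\mathfrak{b})\mu(g_f)|_{\A_\Q}^{-1}$.

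Next I would decompose $X=(X_1,X_2,X_3)$ according to its middle coordinate and its outer pair. The outer coordinates $(X_1,X_3)$ run over a rank-two sublattice in a $K$-hermitian plane of signature $(1,1)$ and assemble, together with the factors involving $(X,g_\infty\underline{\mathfrak{z}_0})$, into precisely a $\GU(2)$-theta kernel $\Theta^0_{k,\mathfrak{c},h}$ for the fractional ideal $\mathfrak{c}=\mathfrak{a}\overline{\mathfrak{b}}$. The middle coordinate $X_2$ runs through a translate of $\mathfrak{p}^r\delta\mathcal{O}_K\cdot\mathfrak{b}g_f$, dictated by the specific lattice $L$ and the residue class $h$, so that its sum is exactly the intrinsic theta function $\vartheta_{\delta\overline{\mathfrak{p}^r\mathfrak{b}}}(n\delta w,\tau)$. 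The prefactors $\varepsilon(\overline{\mathfrak{a}}/\mathfrak{a})$, $N(\mathfrak{b})^{k/2}$, $(\varepsilon^3\chi^{-1})(\mathfrak{b})$ in the statement emerge by tracking the central character $\chi$, the similitude factor $\mu(g_f)$, and the action of $\varepsilon$ on $\det g$ as one rewrites $\theta_h=\sum_{\mathfrak{b}}\theta_{h,\mathfrak{b}}$.

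Summing over $h\in T$, the $\GU(2)$-part of the expression becomes
\[
\sum_{h\in T} C^{-1}\bigl\langle f,\Theta^0_{k,\mathfrak{c},h}\bigr\rangle\Big|_{\tau=\tau_{\mathfrak{a},\mathfrak{b}}} = [\Gamma_1(p^rD):\Gamma(p^rD)]\cdot \mathcal{L}^0(f)|_{\tau=\tau_{\mathfrak{a},\mathfrak{b}}},
\]
which by the Corollary to the theorem on $\mathcal{L}^0$ equals $N(\mathfrak{c})^k T_n(f)|_{\tau=\tau_{\mathfrak{a},\mathfrak{b}}}$. Carrying this out coefficient-by-coefficient in the $q$-expansion of $\vartheta_{\delta\overline{\mathfrak{p}^r\mathfrak{b}}}(n\delta w,\tau)$ and reassembling, one obtains $T_n\bigl(f(\tau)\vartheta_{\delta\overline{\mathfrak{p}^r\mathfrak{b}}}(n\delta w,\tau)\bigr)\big|_{\tau=\tau_{\mathfrak{a},\mathfrak{b}}}$, as claimed.

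The main obstacle will be the bookkeeping. One has to verify that the split of $X$ into middle and outer coordinates correctly identifies the intrinsic theta index as $\delta\overline{\mathfrak{p}^r\mathfrak{b}}$ (as opposed to $\overline{\mathfrak{b}}$ in Finis's level one case), which requires keeping careful track of the effect of the asymmetric lattice $L=p^rD\mathcal{O}_K+\mathfrak{p}^r\delta\mathcal{O}_K+p^rD\mathcal{O}_K$ on the middle entry, of the coprimality condition $(\mathfrak{b},p)=1$, and of the action of $g_f$. A subtle point is that the trace $\mathrm{Tr}_{\Gamma(p^rD)\backslash\Gamma_1(p^rD)}$ that entered through the definition of $\mathcal{L}^0$ is here absorbed into the descent $S_k(\Gamma^1(p^rD))\cong S_k(\Gamma_1(p^rD))$ from the Remark preceding the Corollary, so only $T_n$ (and not also an outer trace, as in Finis's level one formula) remains in the final expression.
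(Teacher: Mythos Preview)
Your overall strategy follows Finis, but there is a genuine gap in the central step: the claimed direct factorization of the $\GU(3)$ theta kernel into a $\GU(2)$ kernel times an intrinsic theta function does not hold as stated. In the explicit expression for $\Theta_{\mathfrak{a},\mathfrak{b},h}$, the factor $(X,\underline{\mathfrak{z}})^k$ expands as $((X^0,(z,1))-\overline{X}_2 w)^k$, and the Gaussian exponent likewise contains $|(X^0,(z,1))-\overline{X}_2 w|^2(\eta^0(z)-|w|^2)^{-1}$. These couple the outer pair $X^0=(X_1,X_3)$ to the middle entry $X_2$ through $w$, so the sum over $X$ is \emph{not} a product of a sum over $X^0$ and a sum over $X_2$.

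What the paper (following Finis) actually does is to Taylor expand in $w$ around $w=0$: one computes $\phi_{\mathfrak{a},\mathfrak{b},h}^{(m)}(z)=\frac{1}{m!}\partial_w^m F_{\mathfrak{a},\mathfrak{b},h}(z,w)|_{w=0}$. Only after taking the $m$-th holomorphic derivative at $w=0$ does the kernel decouple, and crucially the $\GU(2)$ piece that appears is $\Theta^0_{k+m,\mathfrak{c},h}$ of weight $k+m$, not $k$, while the $X_2$-sum becomes $\sum_{X_2}\overline{X}_2^{\,m}e^{2\pi i N(\mathfrak{b})^{-1}N(X_2)\tau}=\vartheta^{(m)}_{\delta\overline{\mathfrak{p}^r\mathfrak{b}}}$. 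Thus the two-dimensional lift enters as $\tilde{\mathcal{L}}^0_{k+m,\mathfrak{c},h}(f\,\vartheta^{(m)}_{\delta\overline{\mathfrak{p}^r\mathfrak{b}}})$ with the weight varying in $m$; the product $f\vartheta^{(m)}$ has weight $(k-1)+(m+1)=k+m$, which is what makes the bookkeeping close. Your invocation of $\mathcal{L}^0$ at a single fixed weight $k$, and the assertion that the $X_2$-sum is ``exactly $\vartheta_{\delta\overline{\mathfrak{p}^r\mathfrak{b}}}$,'' both miss this. The final step is then to resum the Taylor series $\sum_m \phi^{(m)}w^m$ and recognize it as $T_n(f(\tau)\vartheta_{\delta\overline{\mathfrak{p}^r\mathfrak{b}}}(n\delta w,\tau))|_{\tau=\tau_{\mathfrak{a},\mathfrak{b}}}$.
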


\begin{proof}
We adapt Finis's argument to our case. To compute the Fourier-Jacobi coefficients $g_{n/\text{N}(\mathfrak{a}),\mathfrak{a}}$ of $F=\mathcal{L}^p(f)$ set $$g_f=\begin{pmatrix}
\overline{a}_f&&\\&1&\\&&a_f^{-1}\end{pmatrix}\in G(\A_{\Q,f})$$ for $a_f\in \mathbb{A}_{K,f}^\times$ with corresponding fractional ideal $\mathfrak{a}$, and consider the Fourier-Jacobi expansion of $F_{g_f}$:
\[
F_{g_f}(\mathfrak{z})=\sum_{n\in \Q}g_{n,\mathfrak{a}}(w)q^n.
\]
We have
\[
F_{g_f}(\mathfrak{z})=\varepsilon(\overline{\mathfrak{a}}/\mathfrak{a})\sum_{\substack{\mathfrak{b}\in \text{Cl}_{K}\\ (\mathfrak{b},p)=1}}(\varepsilon^3\chi^{-1})(\mathfrak{b})N(\mathfrak{b})^{-k/2}F_{\mathfrak{a},\mathfrak{b}}(\mathfrak{z}),
\]
where $F_{\mathfrak{a},\mathfrak{b}}(f)(\mathfrak{z})$ is the lifting of $f$ against the theta kernel $\Theta_{\mathfrak{a},\mathfrak{b}}$ defined as
\begin{align*}
\Theta_{\mathfrak{a},\mathfrak{b}}(\tau, \mathfrak{z})=&2^{k}D^{-k/2}y^2c_r\sum_{\substack{X\in (p^rD)^{-1}\overline{\mathfrak{a}}\mathfrak{b}\oplus \mathfrak{b}\oplus (p^rD)^{-1}\mathfrak{a}^{-1}\mathfrak{b}\\(g_f\mathfrak{b})^{-1}X\equiv h_0\, (L)}}(X,\underline{\mathfrak{z}})^k\eta(\mathfrak{z})^{-k}\\
& e^{2\pi iN(\mathfrak{b})^{-1}|\mu(g_f)|_{\A_{\Q}} (2iy\eta(\mathfrak{z})^{-1}|(X,\underline{\mathfrak{z}})|^2+\overline{\tau}(X,X))},
\end{align*}
where $h_0=(0,0,-p^{-r}D^{-1})\in L^*/L$ and $c_r:=[\Gamma_1(p^{2r}D^2):\Gamma(p^{2r}D^2)]^{-1}$.
By considering Taylor coefficients at zero, we are reduced to computing the Fourier expansions of the scalar valued functions
\[
\phi_{\mathfrak{a},\mathfrak{b}}^{(m)}(z):=\frac{1}{m!}\frac{\partial^m}{\partial w^m}F_{\mathfrak{a},\mathfrak{b}}(z,w)\bigg|_{w=0}.
\]
In a first step, we express these functions in terms of the so-called two-dimensional lifting $\mathcal{L}^0$. Evidently, $\phi^{(m)}_{\mathfrak{a},\mathfrak{b}}=\langle  f,\Theta_{\mathfrak{a},\mathfrak{b}}^{(m)}\rangle$ with
\[
\overline{\Theta^{(m)}_{\mathfrak{a},\mathfrak{b}}(\tau,z)}=\frac{1}{m!}\frac{\partial^m}{\partial w^m}\overline{\Theta_{\mathfrak{a},\mathfrak{b}}\big(\tau,(z,w)\big)}\bigg|_{w=0}.
\]
Explicitly, we have
\begin{align*}
\overline{\Theta_{\mathfrak{a},\mathfrak{b}}(\tau,\mathfrak{z})}=&2^{k}D^{-k/2}y^2c_r\sum_{\substack{X\in (p^rD)^{-1}\overline{\mathfrak{a}}\mathfrak{b}\oplus \mathfrak{b}\oplus (p^rD)^{-1}\mathfrak{a}^{-1}\mathfrak{b}\\(g_f\mathfrak{b})^{-1}X\equiv h_0\, (L)}}(\overline{(X^0,(z,1))}-X_2\overline{w})^k\\
 (\eta^0(z)-&|w|^2)^{-k}e^{-2\pi iN(\mathfrak{b})^{-1}(-2iy|(X^0,(z,1))-\overline{X}_2w|^2(\eta^0(z)-|w|^2)^{-1}+\tau((X^0,X^0)-\text{N}(X_2)))},
\end{align*}
with $X^0:=(X_1,X_3)$. Taking holomorphic derivatives with respect to $w$ yields
\begin{align*}
\overline{\Theta^{(m)}_{\mathfrak{a},\mathfrak{b}}(\tau,z)}=&2^{k}D^{-k/2}y^{2+m}c_r\text{N}(\mathfrak{a})^{-k-m}\frac{1}{m!}\bigg(\frac{4\pi}{\text{N}(\mathfrak{b})}\bigg)^m\\
&\Bigg(\sum_{\substack{Y\in \overline{\mathfrak{c}}L_0^*\\\overline{\mathfrak{c}}^{-1}Y\equiv h_0\,(L_0)}}\Phi_{k+m,\mathfrak{c}}(Y,\tau,z')\Bigg)\Bigg(\sum_{X_2\in \mathfrak{b}}\overline{X}_2^{m}e^{2\pi i\text{N}(b)^{-1}\text{N}(X_2)\tau}\Bigg),
\end{align*}
where we set $Y=(X_1,\text{N}(\mathfrak{a})X_3), z'=\text{N}(\mathfrak{a})^{-1}z$ and $\mathfrak{c}=\mathfrak{a}\overline{\mathfrak{b}}$. Now notice how the first summation just corresponds to the conjugation of the theta kernel $\Theta^0_{k+m,\mathfrak{c}}(\tau,z')$ and the second summation can be rewritten as $\frac{\partial^{m}}{\partial w^m}\vartheta_{\overline{\mathfrak{b}}}$, for $\vartheta_{\overline{\mathfrak{b}}}$ the theta function defined in the previous section, \ref{theta}. By results of Hecke, see \cite[p. 237]{petersson}, $\vartheta^{(m)}_{\overline{\mathfrak{b}}}$ is a modular form of weight $m+1$ and character $\omega_{K/\Q}$ for the group $\Gamma_0(D)$.
After some computations which are carried exactly in the same way in \cite[p. 19-20]{finis}, we can now rewrite
\begin{align*}
\phi^{(m)}_{\mathfrak{a},\mathfrak{b}}(z)=\text{N}(\mathfrak{a})^{-k-m}\text{N}(\mathfrak{b})^{-m}\frac{(2\pi \sqrt{D})^m}{m!}\mathcal{L}^0_{k+m,\mathfrak{c}}\big(f\vartheta^{(m)}_{\overline{\mathfrak{b}}}\big)(z').
\end{align*}
Summing over all representatives $\mathfrak{b}$, we have computed the $m$-th Taylor coefficient of $g_{n/\text{N}(\mathfrak{a}),\mathfrak{a}}$ at $w=0$. The result easily follows by putting together the Taylor expansion $F_{\mathfrak{a},\mathfrak{b}}$ of $\vartheta_{\overline{\mathfrak{b}}}$:
\[
\sum_{m\geq 0}\phi_{\mathfrak{a},\mathfrak{b}}^{(m)}w^m=\sum_{m\geq 0}\text{N}(\mathfrak{a})^{-k-m}\text{N}(\mathfrak{b})^{-m}\frac{(2\pi \sqrt{D})^m}{m!}\mathcal{L}^0_{k+m,\mathfrak{c}}\big(f\vartheta^{(m)}_{\overline{\mathfrak{b}}}w^m\big)(z').
\]
\end{proof}

\begin{Corollary}
Let $k,\varepsilon$ and $\chi$ be as before. There exists a complex period $\Omega_0$ such that $\mathcal{L}^{p,\emph{ar}}_{k,\varepsilon,\chi}=\Omega_0^{-k}\mathcal{L}^p_{k,\varepsilon,\chi}$ is arithmetic, i.e. 
$$f\in S_{k-1}(\Gamma_1(p^{2r}D^2),\omega_{K/\Q}\xi_{2r},\overline{\Z}[1/D])\Rightarrow\mathcal{L}^{p,\emph{ar}}(f)\in \mathscr{S}(k,\Gamma_0^1(p^{2r}D^2),\chi, \overline{\Z}[1/D]).$$
\end{Corollary}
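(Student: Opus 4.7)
The plan is to adapt Finis's proof of arithmeticity for the level-one lift essentially verbatim, since the Fourier-Jacobi formula of the preceding theorem has the same shape: the class sum is merely restricted to ideals coprime to $p$, and $\vartheta_{\overline{\mathfrak{b}}}$ is replaced by $\vartheta_{\delta\overline{\mathfrak{p}^r\mathfrak{b}}}$. Neither change affects the CM-period analysis, so I will take the same period $\Omega_0$ that appears in the level-one theorem quoted above.

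With $\Omega_0$ fixed, the two classical inputs I would invoke are Shimura's theorems that (i) for any nearly holomorphic modular form $F$ of weight $k$ with Fourier coefficients in $\iota_\infty(\overline{\Z}[1/D])$ and any CM point $\tau_0\in K\cap \h$, the value $\Omega_0^{-k}F(\tau_0)$ is algebraic; and (ii) the intrinsic theta function $\vartheta_{\mathfrak{c}}(w,\tau_0)$, evaluated at CM arguments and multiplied by the Gaussian factor built into the arithmeticity condition on $\mathcal{T}_{d,k}$, yields an algebraic value after dividing by an appropriate power of $\Omega_0$.

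Given these inputs, the proof proceeds coefficient by coefficient on the Fourier-Jacobi expansion. For $f \in S_{k-1}(\Gamma_1(p^rD),\omega_{K/\Q},\overline{\Z}[1/D])$, the Hecke operator $T_n$ preserves the $\overline{\Z}[1/D]$-structure on modular forms; the product $f(\tau)\vartheta_{\delta\overline{\mathfrak{p}^r\mathfrak{b}}}(n\delta w,\tau)$ is a nearly holomorphic modular form of weight $k$ whose $q$-expansion lies in $\overline{\Z}[1/D]$, so its CM specialization at $\tau_{\mathfrak{a},\mathfrak{b}}$ lies in $\Omega_0^{k}\cdot\iota_\infty(\overline{\Q})$ by (i). The remaining prefactors $\varepsilon(\overline{\mathfrak{a}}/\mathfrak{a})$, $(\varepsilon^3\chi^{-1})(\mathfrak{b})$ and $\text{N}(\mathfrak{b})^{k/2}$ are algebraic because $\varepsilon,\chi$ are algebraic Hecke characters, so multiplying by $\Omega_0^{-k}$ yields the arithmetic Fourier-Jacobi coefficient in the sense of Section \ref{arithmetic}.

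The main obstacle is the bookkeeping of the $\Omega_0$-exponent: verifying that the periods attached to $f$ at weight $k-1$, to $\vartheta_{\delta\overline{\mathfrak{p}^r\mathfrak{b}}}$ at weight one, and to the Gaussian factor $e^{-\pi(n/\text{N}(\mathfrak{a}))\sqrt{D}|w|^2}$ required by arithmeticity on $\mathcal{T}_{d,k}$ combine to exactly $\Omega_0^k$. This matches the computation in Finis's thesis; the only new datum is the factor $\mathfrak{p}^r$ in $\vartheta_{\delta\overline{\mathfrak{p}^r\mathfrak{b}}}$, which rescales the lattice by an ideal of norm $p^r$, a unit in the relevant coefficient ring once $p$ is inverted. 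Hence $\Omega_0^{-k}\mathcal{L}^p_{k,\varepsilon,\chi}(f) \in \mathscr{S}(k,\Gamma_1(p^rD),\chi,\overline{\Z}[1/D])$, as claimed.
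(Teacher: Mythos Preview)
Your approach is essentially the paper's: the proof there consists of the single observation that Finis's arguments still apply because the intrinsic theta functions $\vartheta_{\mathfrak{c}}$ appearing in the Fourier--Jacobi formula are of the same kind as in the level-one case. Your expanded sketch of what Finis's argument actually contains (Shimura's CM-period theorems applied coefficient-by-coefficient) is consistent with this.

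One correction to your final sentence: you write that the extra factor $\mathfrak{p}^r$ in $\vartheta_{\delta\overline{\mathfrak{p}^r\mathfrak{b}}}$ rescales the lattice by an ideal of norm $p^r$, ``a unit in the relevant coefficient ring once $p$ is inverted.'' But $p\nmid D$, so $p$ is \emph{not} inverted in $\overline{\Z}[1/D]$, and if your argument genuinely required inverting $p$ it would not reach the stated target. The correct reason the lattice shift is harmless is the one the paper gives implicitly: Shimura's arithmeticity results for the intrinsic theta functions $\vartheta_{\mathfrak{c}}$ are uniform in the fractional ideal $\mathfrak{c}$, so replacing $\overline{\mathfrak{b}}$ by $\delta\overline{\mathfrak{p}^r\mathfrak{b}}$ introduces no new denominators at $p$.
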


\begin{proof}
Finis's arguments, see \cite[p. 36]{finis}, still apply to our simpler formulas, as the theta functions $\vartheta$ are the same in both cases.
\end{proof}

We now show that, much as in the level $1$ case, the $p$-modified lift is Hecke equivariant, and in fact that a completely analogous statement to Proposition \ref{eigen} can be obtained. For this we will adapt Kudla's proof given in \cite[\S 2]{kudla1}, which draws from Eichler's classical technique \cite[2, Satz 21.3]{eichler} to relate the actions of Hecke operators and generalized Brandt matrices on spaces of theta functions.

\begin{Theorem}
\label{peigen}
If $f\in S_{k-1}(\Gamma_1(p^{2r}D^2),\omega_{K/\Q}\xi_{2r})$ is a Hecke eigenform with eigenvalues $a_p$ and the lifting $\mathcal{L}^p_{k,\varepsilon, \chi}(f)$ is non-zero, then the latter is again a Hecke eigenform with eigenvalues:
\[
\lambda^p(\mathfrak{q})=\begin{cases}
p\varepsilon(\overline{\mathfrak{p}})a_p, & \mathfrak{q}=\mathfrak{p},\\
\lambda(\mathfrak{q}), & \text{if}\;\, q\ne p\;\, \text{is unramified in } K,
\end{cases}
\]
where $\lambda(\mathfrak{q})$ are the values defined in section \ref{eigen}.
\end{Theorem}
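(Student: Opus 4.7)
The plan is to follow the strategy of Kudla in \cite[p. 329]{kudla1}, which adapts Eichler's classical method \cite[2, Satz 21.3]{eichler} to relate the action of Hecke operators on spaces of automorphic forms, through a transposition argument, to their action on the theta kernel. Concretely, for a prime $\mathfrak{q}$ of $K$ and $F=\mathcal{L}^p_{k,\varepsilon,\chi}(f)=\sum_{h\in T}C^{-1}\langle f,\theta_h\rangle$, I would compute $T_{\mathfrak{q}}F$ by moving the Hecke operator inside the Petersson pairing via adjointness, and then show that its image under this move is an operator on the $\SL_2$-variable side whose action on $f$ produces the stated eigenvalue.

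For primes $q\ne p$ unramified in $K$, the $p$-modification leaves everything untouched at $q$: the local lattice $L\otimes \Z_q$ and the auxiliary set $T$ coincide with those used in the level-one lift, so the coset decomposition of $S_{\mathfrak{q}}$ and its action on the sum $\sum_h \theta_{h,\mathfrak{b}}$ are precisely as in Kudla's computation. The conclusion $\lambda^p(\mathfrak{q})=\lambda(\mathfrak{q})$ then follows from Proposition \ref{eigen} after the normalization $T_{\mathfrak{q}}=\mathrm{N}(\mathfrak{q})^{k/2-1}T_{\mathfrak{q}}^{\mathrm{naive}}$.

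For $\mathfrak{q}=\mathfrak{p}$, the cleanest route is to exploit the Fourier-Jacobi coefficient formula from the preceding theorem: after applying $T_{\mathfrak{p}}$ to $F$, one computes each Fourier-Jacobi coefficient at a pair $(\mathfrak{a},n)$ and compares it with the coefficient $g_{n/\mathrm{N}(\mathfrak{a}),\mathfrak{a}}$ of $F$ itself. The summation is restricted to ideal classes $\mathfrak{b}$ coprime to $p$, and this restriction combined with the extra factor $\mathfrak{p}^r$ inside $\vartheta_{\delta\overline{\mathfrak{p}^r\mathfrak{b}}}$ kills the "intertwining" contribution that in the level-one case produced the term $p^{k/2}(\chi\varepsilon^{-2})(\overline{\mathfrak{p}})$. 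Only the translate coming from the Hecke action on $f$ at $p$ survives; after accounting for the central character twist $\varepsilon(\overline{\mathfrak{p}})$ and the normalization $T_{\mathfrak{p}}=p^{k/2-1}T_{\mathfrak{p}}^{\mathrm{naive}}$, one obtains exactly $p^2\varepsilon(\overline{\mathfrak{p}})a_p$.

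The main obstacle is the explicit verification at $\mathfrak{q}=\mathfrak{p}$. One must carefully track how the coset representatives of $S_{\mathfrak{p}}$ modulo the stabilizer of $L=p^rD\mathcal{O}_K\oplus \mathfrak{p}^r\delta\mathcal{O}_K\oplus p^rD\mathcal{O}_K$ permute both the theta kernel and the set $T$, and show that the sum over these representatives telescopes in the expected way. In particular, the vanishing of the would-be term $p^{k/2}(\chi\varepsilon^{-2})(\overline{\mathfrak{p}})$ must be traced to the combination of three effects: the modification of the lattice $L$ at $\mathfrak{p}$, the restriction $(\mathfrak{b},p)=1$ in the definition of $\theta_h$, and the fact that the relevant coset representative moves $\mathfrak{b}$ into a class no longer coprime to $p$, which is excluded from the sum. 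Once this bookkeeping is carried out, the asymmetry of the formula (no $\overline{\mathfrak{p}}$-case is stated) also reflects itself in the asymmetric role of $\mathfrak{p}$ in the lattice $L$.
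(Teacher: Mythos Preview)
Your overall strategy and the treatment of primes $q\ne p$ match the paper: the local data at such $q$ is unchanged by the $p$-modification, so Kudla's computation in \cite{kudla1} carries over verbatim.

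At $\mathfrak{q}=\mathfrak{p}$, however, the paper does \emph{not} go through the Fourier--Jacobi expansion. It works directly on the theta kernel, and the argument is purely local: one writes $S_{\mathfrak{p}}=\bigcup_i \Gamma_p g_i$ and introduces the multiplicity $m(p,X)=|\{i\mid X\in g_i^\iota L'_p\}|$, so that
\[
T(\mathfrak{p})(\overline{\theta})=p^{-1}y^2\sum_{X} m(p,X)\,\overline{f_{p^{-1}\tau,\mathfrak{z}}(X)}.
\]
The heart of the proof is then a lemma computing $m(p,X)$: using the split isomorphism $G(\Q_p)\cong \GL_3(\Q_p)\times\Q_p^\times$ and explicit coset representatives, one finds $m(p,X)=p$ if $p\mid (X,X)$ and $m(p,X)=0$ otherwise. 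Comparing with the action of the elliptic $T(p)$ on the $\tau$-variable, which also selects exactly the $X$ with $p\mid (X,X)$, gives $T(\mathfrak{p})(\overline{\theta})=p^2\,\overline{T(p)\theta}$, and adjointness finishes the argument.

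In particular, the mechanism you propose for the vanishing of the term $p^{k/2}(\chi\varepsilon^{-2})(\overline{\mathfrak{p}})$---that a coset representative moves $\mathfrak{b}$ into a class no longer coprime to $p$---is not what happens. The paper explicitly notes that the computation is independent of $\mathfrak{b}$ and reduces to the class-number-one situation; the global restriction $(\mathfrak{b},p)=1$ plays no role at this step. The disappearance of the extra term is entirely a local phenomenon encoded in the multiplicity formula: at level one the analogous count produces additional strata (Kudla's cases with $m(\mathfrak{q},X)$ taking several values), whereas here the modified lattice at $p$ collapses everything to the single dichotomy $m(p,X)\in\{0,p\}$. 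Your Fourier--Jacobi route may be workable, but as written it does not isolate the correct reason for the simplification, and you would still need an explicit description of how $T_{\mathfrak{p}}$ acts on Fourier--Jacobi coefficients, which is essentially the same local coset computation in disguise.
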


\begin{proof}
For $g\in G(\Q)$, let $g^\iota=\mu(g)g^{-1}$. Then $g^\iota \in G(\Q), (gX,Y)=(X,g^\iota Y)$ and $\mu(g^\iota)=\mu(g)$. Also, $\Gamma^1_0(p^{2r}D^2)^\iota=\Gamma^1_0(p^{2r}D^2)$ and, since $L^*$ is maximal in the sense of \cite[p. 375]{shimura3}, the set $\{g\in G(\Q)\mid gL^*\subset L^*\}$ is also stable under $\iota$.

Now let $S(\mathfrak{q})=\bigcup_i \Gamma^1_0(p^{2r}D^2)g_i$, and for $X\in L^*$ set
\[
m(\mathfrak{q},X)=|\{i\mid X\in g_i^\iota L^*\}|.
\] 
Note that the the sublattices $g_i^\iota L^*\subset L^*$ are independent of the choice of coset representatives, and that the multiplicities $m(\mathfrak{q},X)$ only depend on the $\Gamma^1_0(p^{2r}D^2)$ orbit of $X$, since $\Gamma^1_0(p^{2r}D^2)$ only permutes the sublattices in question.

Given a fractional ideal $\mathfrak{b}$ of $K$, for convenience write 
\[
\theta_{\mathfrak{b}}(\tau, \mathfrak{z})=y^2\sum_{\substack{X\in L^*\\ X\equiv h_0 (L)}} f^{\mathfrak{b}}_{\tau, \mathfrak{z}}(X),
\]
where $\theta_{\mathfrak{b}}=\theta_{\mathfrak{b},h_0}$. 
As the computations in the rest of the proof are independent of the fixed fractional ideal $\mathfrak{b}$, we drop the notation concerning this choice, which amounts to restricting to the case $\text{Cl}_K=1$. Indeed, all the arguments are of a local nature.

Applying $T(\mathfrak{q})$ to $\overline{\theta(\tau,\mathfrak{z})}$ yields:
\begin{align*}
T(\mathfrak{q})(\overline{\theta})=&\text{N}(\mathfrak{q})^{k-1}\sum_{i}j(g_i,\mathfrak{z})^{-k}y^2\sum_{\substack{X\in L^*\\ X\equiv h_0 (L)}} \overline{f_{\tau, g_i(\mathfrak{z})}(X)}\\
&\text{N}(\mathfrak{q})^{k-1}\sum_{i}\mu(g_i)^{-k}y^2\sum_{\substack{X\in L^*\\ X\equiv h_0 (L)}} \overline{f_{\mu(g_i)^{-1}\tau, \mathfrak{z}}(g_i^\iota X)}\\
&p^{-1}y^2\sum_{\substack{X\in L^*\\ X\equiv h_0 (L)}} m(\mathfrak{q},X) \overline{f_{\text{N}(\mathfrak{q})^{-1}\tau, \mathfrak{z}}(X)},
\end{align*}
where we use the identity
\[
\underline{g(\mathfrak{z})}\eta(g(\mathfrak{g}))^{-1}=\mu(g)^{-1}g\underline{\mathfrak{z}}\eta(\mathfrak{z})^{-1}\overline{j(g,\mathfrak{z})}
\]
and the fact that $\mu(g_i)=\text{N}(\mathfrak{q})$ for all $i$.

Almost all the rest of the proof is devoted to compute the multiplicities $m(\mathfrak{q}, X)$. For each rational prime $q$, let $K_q=K\otimes_\Q \Q_q$, $\mathcal{O}_q=\mathcal{O}_K\otimes_\Z \Z_q$, $V_q=K_q^3$ and $L_q^*=p^{-r}D^{-1}\mathcal{O}_q\oplus \frac{1}{2}\mathcal{O}_q\oplus p^{-r}D^{-1}\mathcal{O}_q$. Also we write $G_q=G(\Q_q)$ and let
\[
\Gamma_q=\{\gamma\in G_q\mid \gamma L_q^*=L_q^*, \gamma h_q=h_q\},
\]
where 
\[
h_q=\begin{cases}
(0,0,-p^{-r})\;\;\; &\text{if } q= p,\\
(0,0,-D^{-1})\;\;\; &\text{if } q\mid D,\\
(0,0,0)\;\;\; &\text{otherwise.}
\end{cases}
\]
Then for a prime $\mathfrak{q}$ of $K$ above $q$, we have
\[
S_\mathfrak{q}=\bigcup_i \Gamma_q g_i
\]
so that $S_{\mathfrak{q}}\subset G_q, \Gamma_qS_{\mathfrak{q}}\Gamma_q=S_{\mathfrak{q}}$ and in fact the above union is still disjoint. For $\ell\ne q$,
\[
(g_i^\iota L)_{\ell}^*=L_{\ell}^*,
\]
so that
\[
X\in g_i^\iota L^*\Leftrightarrow X\in g_i^\iota L_q^*
\]
and 
\[
m(\mathfrak{q},X)=m(q,X)=|\{i\mid X\in g_i^\iota L^*_q\}|,
\]
where the $g_i^\iota$ can be any set of coset representatives for $\Gamma_q$ in $S_\mathfrak{q}$. From this, it immediately follows that if $q$ is coprime to $pD$, then Kudla's computations in \cite[\S 2]{kudla1} apply in the exact same way. Hence we have proved the second part of the theorem, and we can restrict to $q=p$. First of all note that $m(p,X)$ only depends on $X\!\!\mod pL_p^*$, cf. \cite[p. 330]{kudla1}.
We need the following technical result to conclude:
\[
m(p,X)=\begin{cases}
1 &\text{if}\; (X,X)\equiv 0\mod p\\
0 &\text{otherwise}.
\end{cases}
\]
We postpone its proof to the end of the section.
By the above we have,
\[
T(\mathfrak{p})(\overline{\theta})=y^2\sum_{\substack{X\equiv h_0 (L_p)\\p\mid (X,X)}}\overline{f_{p^{-1}\tau, \mathfrak{z}}(X)}.
\]
On the other hand,
\[
\overline{T(p)(\theta)}=p^{-1}\sum_{j=0}^{p-1}\overline{\theta(p^{-1}(\tau+j),\mathfrak{z})}=p^{-2}y^2\sum_{\substack{X\equiv h_0 (L_p)\\p\mid (X,X)}}\overline{f_{p^{-1}\tau, \mathfrak{z}}(X)},
\]
and the result follows easily by the usual adjointness properties of the Hecke operators with respect to the Petersson inner product, cf. \cite[p. 327]{kudla1}; note that the dependence on $\varepsilon$ of the final formula is analogous to the result in Finis \cite[p. 11]{finis} and is due to the presence of the factor $\varepsilon(\overline{\det g})$ in the definition of $\theta(\tau, g)$.
\end{proof}

Now we only need to prove the technical lemma:
\begin{Lemma}
If $X\in L_p^*, X\equiv h_0\mod L_p$, is such that $(X,X)\equiv 0\!\! \mod p$, then $m(p,X)=1$, otherwise $m(p,X)=0$.
\end{Lemma}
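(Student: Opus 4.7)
The plan is to localize at $p$ and exploit the split structure. Since $p$ splits in $K$, we have $\mathcal{O}_p\cong\Z_p\oplus\Z_p$, so $L'_p$ decomposes as $L_\mathfrak{p}\oplus L_{\overline{\mathfrak{p}}}$, with each summand a free $\Z_p$-module of rank $3$. In this picture $G_p\cong\GL_3(\Q_p)\times\Q_p^\times$: the $\GL_3$-factor acts on $L_\mathfrak{p}$ as the standard representation and on $L_{\overline{\mathfrak{p}}}$ as the similitude-twisted contragredient, while the second factor records $\mu$. The hermitian form identifies with a perfect $\Q_p$-bilinear duality $L_\mathfrak{p}\times L_{\overline{\mathfrak{p}}}\to\Q_p$, so that $(X,X)$ is, up to a unit, the pairing $\langle X_\mathfrak{p},X_{\overline{\mathfrak{p}}}\rangle$. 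Since $m(p,X)$ only depends on $X\bmod pL'_p$ (as already noted in the proof of Theorem \ref{peigen}), everything can be done at this local, mod-$p$ level.

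Next I would enumerate a set of representatives $g_i$ for $\Gamma_p\backslash S_\mathfrak{p}$. The condition $\det g\in\mathfrak{p}\overline{\mathfrak{p}}^2$ forces $\mu(g)\in p\Z_p^\times$ and pins down the elementary divisors of $g^\iota L'_p\subset L'_p$ in each of the two split components: they are $(p,1,1)$ on the $\mathfrak{p}$-side and $(p,p,1)$ on the $\overline{\mathfrak{p}}$-side. By elementary divisor theory, the $\Gamma_p$-orbits of such sublattices are parametrized by flag data in $L_\mathfrak{p}/pL_\mathfrak{p}$ and $L_{\overline{\mathfrak{p}}}/pL_{\overline{\mathfrak{p}}}$, tied together by the $T$-compatibility condition carried by $\Gamma_p$ (recall $\Gamma_p$ was required to preserve $T$). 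One must be careful here with the $\mathfrak{p}^r$-shift in the middle component of $L'$, which affects the local structure of the lattice but not, once the split is performed, the combinatorics of the flag count.

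Finally I would compute $m(p,X)=|\{i : X\in g_i^\iota L'_p\}|$. Because $g_i^\iota L'_p$ respects the split decomposition, the inclusion $X\in g_i^\iota L'_p$ decouples into one condition on $X_\mathfrak{p}$ and one on $X_{\overline{\mathfrak{p}}}$. Using the $T$-alignment, these two conditions are dual, and a short calculation shows that a valid $g_i$ exists iff $\langle X_\mathfrak{p},X_{\overline{\mathfrak{p}}}\rangle\equiv 0\bmod p$, i.e.\ $(X,X)\equiv 0\bmod p$; otherwise $m(p,X)=0$. When $(X,X)\equiv 0\bmod p$, enumerating the codimension-$1$ subspaces of $L_{\overline{\mathfrak{p}}}/pL_{\overline{\mathfrak{p}}}$ lying in the annihilator of $X_\mathfrak{p}$ (equivalently, the $\F_p$-points of a projective line) yields exactly $p$ valid representatives. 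The main obstacle is verifying that the $T$-preservation constraint precisely removes the ``diagonal'' coset representative that would contribute an extra $+1$ in the level-one analogue, so that one recovers the clean value $m(p,X)=p$ rather than $p+1$; this discrepancy is exactly what distinguishes the $p$-modified lift from the level-one Kudla lift and produces the eigenvalue $p^2\varepsilon(\overline{\mathfrak{p}})a_p$ of Theorem \ref{peigen}, with no secondary term.
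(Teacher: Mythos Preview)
Your setup matches the paper's exactly: localize at $p$, use the split idempotents to write $V_p\cong\Q_p^3\times\Q_p^3$, identify $G_p\cong\GL_3(\Q_p)\times\Q_p^\times$, and reduce to counting which coset representatives $g_i^\iota$ contain a given $X$ modulo $p$. The paper then diverges from your plan: rather than reasoning abstractly via elementary divisors and flag varieties, it simply imports Kudla's explicit level-one representatives for $S_{\overline{\mathfrak{p}}}$, conjugates them by the diagonal matrix $\gamma_{p^r}$ to pass to the $\Gamma_p$-level structure, and writes down the resulting $p^2$ matrices explicitly, parametrized by $(a,b)\in(\Z_p/p\Z_p)^2$. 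The mod-$p$ reductions are then inspected directly: the column spans of the two factors are visibly orthogonal, giving $m(p,X)=0$ when $p\nmid(X,X)$; and the reductions are independent of $a$, so $p\mid m(p,X)$ automatically. The final step is to observe that the $T$-condition together with $(X,X)\equiv 0\pmod p$ forces the \emph{first coordinate of $X$} to vanish mod $p$, which pins down a unique $b$ and leaves $a$ free, yielding exactly $p$.

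Your proposal has a genuine gap at precisely this counting step. First, your phrase ``codimension-$1$ subspaces of $L_{\overline{\mathfrak{p}}}/pL_{\overline{\mathfrak{p}}}$ lying in the annihilator of $X_\mathfrak{p}$'' is garbled: codimension-$1$ subspaces of a $3$-space contained in a $2$-dimensional annihilator form a singleton, not a $\mathbb{P}^1$; presumably you mean lines, but even then the bookkeeping with the $\mathfrak{p}^r$-shift in the middle coordinate and the $T$-condition is not carried out. Second, and more importantly, your heuristic that the $T$-constraint ``removes one diagonal coset representative'' to turn $p+1$ into $p$ is not how the argument actually runs and is not obviously correct: the $T$-condition in the paper constrains the \emph{vector} $X$ (forcing its first coordinate to vanish mod $p$), not the set of representatives, and the total number of representatives here is $p^2$, not $p^2+p+1$. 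You yourself flag this as ``the main obstacle'' and do not resolve it; the paper sidesteps the issue entirely by working with the explicit matrices, where the count is immediate.
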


\begin{proof}
First of all, note that for $X$ as in the statement, $(X,X)\in \Z$. Hence, if $Y=g_i^\iota X$ for some $X$ as above, then $(Y,Y)=\mu_{g_i^\iota}(X,X)\equiv 0\mod p$, which proves the second part of the lemma. We may thus suppose that $p\mid (X,X)$. Then, it is clear that the function $m(p,X)$ is actually defined on $L_p^*/pL_p^*$, and is constant on $\Gamma_p$-orbits in this set. Therefore to compute $m(p,X)$ it will be sufficient to find a set of $\Gamma_p$-orbit representatives and a set of coset representatives $g_j$; finally, for each such orbit representative we will count the number of coset representatives which contain the former in their image.

Recall that $p$ splits in $K$, and so take idempotents $e$ and $e^\sigma$ in $K_p$ corresponding to the two primes dividing $p$. Then
\[
V_p=eV_p+e^\sigma V_p\cong \Q_p^3\times \Q_p^3
\]
and
\[
L_p^*=eL_p^*+e^\sigma L_p^*\cong (p^{-r}\Z_p+\Z_p+p^{-r}\Z_p)\times (p^{-r}\Z_p+\Z_p+p^{-r}\Z_p).
\]
Set $J_1=eJ, J_2=e^\sigma J$. Then
\begin{align*}
G_p=G(\Q_p)\cong & \{(g_1,g_2)\in \GL_3(\Q_p)\times \GL_3(\Q_p)\mid {^t}g_1J_2g_2=cJ_2, c\in \Q_p^\times\}\\
\cong &\GL_3(\Q_p)\times \Q_p^\times,
\end{align*}
since $g_2=cg_1^*$ with $g_1^*=J_2^{-1}{^t}g_1^{-1}J_2$. The action of $[g,c]\in \GL_3(\Q_p)\times \Q_p^\times$ on $[X,Y]\in \Q_p^3\times \Q_p^3$ is given by
\[
[g,c][X,Y]=[gX,cg^*Y].
\]
The above action is equivalent to the standard action
\[
[g,c]'[X,Y]=[gX,c{^t}g^{-1}Y]
\]
under the isomorphism $1\times J_2:\Q_p^3\times \Q_p^3\to \Q_p^3\times \Q_p^3$. Given a set of representatives $\tilde{g}_i$ for $S_{\overline{\mathfrak{p}}}/\Gamma^1_0(p^{2r}D^2)_p$, since $S_{\overline{\mathfrak{p}}}=S_{\mathfrak{p}}^\iota$, we know that we may take $g_i^\iota=\tilde{g}_i$.
By \cite[p. 333]{kudla1}, the images of these $g_i^\iota$'s in $\GL_3(\Q_p)\times \GL_3(\Q_p)$ under the isomorphism corresponding to the above standard action will be, cf. \cite[p. 334]{kudla1},
\[
\Bigg[\begin{pmatrix}
p&\mu b&a \\
&1&\\
&& 1
\end{pmatrix},\begin{pmatrix}
1&&\\
-\mu b&p&\\
a&&p
\end{pmatrix}\Bigg],
\]
where $a$ and $b$ run over a set of representatives for $\Z_p/p\Z_p$ and the image of $\delta$ in $\Q_p\times \Q_p$ is $[\mu,-\mu]$. 

The reduction modulo $p$ of these elements gives representatives in $M_3(\F_p)\times M_3(\F_p)$:
\[
\Bigg[\begin{pmatrix}
0&\mu b&a \\
&1&\\
&& 1
\end{pmatrix},\begin{pmatrix}
1&&\\
-\mu b&0&\\
a&&0
\end{pmatrix}\Bigg],
\]
Since we consider $X\equiv h_0\mod L$, we easily see that each such vector is in the image of exactly $1$ representative, which implies the result.
\end{proof}

\section{The $p$-adic interpolation of the Kudla lift}
In this section, we write $\mathcal{L}$ for $\mathcal{L}^{p,\text{ar}}$, and use the same convention for the Fourier-Jacobi coefficients, to simplify the notation.
In the following, we show that the lift defined in section \ref{kudla} induces a lift of $\Lambda$-adic elliptic modular forms to $\Lambda$-adic Picard modular forms. First of all, we recall the notions of $p$-adic and $\Lambda$-adic modular forms both in the elliptic and in the Picard case, following \cite[\S 3.3]{hsieh} and \cite[\S 5.3]{goren} for the latter.

A formal $q$-expansion $f\in \overline{\Z}_p[[q]]$ is a $p$-adic modular form (resp. cusp form) of tame level $\Gamma_1(D)$, if it is the $p$-adic limit of classical modular forms $f_i$ of tame level $\Gamma_1(D)$. As is well known, the weights $k_i$ of the modular forms $f_i$ also converge to a character $\kappa\in \text{Hom}_{\text{cont}}(\Z_p^\times,\Z_p^\times)$. We will write $f\in M^{Serre}_{\kappa-1}(\Gamma_1(D),\overline{\Z}_p)$ (resp. $S^{Serre}_{\kappa-1}(\Gamma_1(D),\overline{\Z}_p)$), to underline that it is a $p$-adic modular form (resp. cusp form) in the sense of Serre.

\subsection{On $\Lambda$-adic elliptic cusp forms}
We start by reviewing Hida's construction of elliptic cuspidal $\Lambda$-adic modular forms of tame level $D$. Let $\Gamma=1+p\Z_p$ be the maximal torsion-free subgroup of $\Z_p^\times$. We fix a topological generator $\gamma\in \Gamma$ such that $\Gamma=\overline{\langle\gamma\rangle}$. Let $\Lambda=\Z_p[[\Gamma]]$ and $\Lambda_1=\Z_p[[\Z_p^\times]]$ be the completed group rings on $\Gamma$ and on $\Z_p^\times$ over $\Z_p$, respectively. Clearly $\Lambda_1$ has a natural $\Lambda$-algebra structure induced from $\Lambda_1\cong \Lambda[\mu_{p-1}]$, where $\mu_{p-1}$ denotes the maximal torsion subgroup of $\Z_p^\times$ consisting of $(p-1)$-th roots of unity.

\begin{definition}[$\Lambda$-adic weight spaces]
\label{weigths}
For each $\Lambda_1$-algebra $R$ finite flat over $\Lambda$, we define the $\Lambda$-adic weight space $\mathfrak{X}(R)$ associated with $R$ as
\[
\mathfrak{X}(R):=\text{Hom}_{\text{cont}}(R,\overline{\Q}_p),
\]
on which the following arithmetic notions are introduced:
\begin{enumerate}[(i)]
\item A point $P\in \mathfrak{X}(R)$ is said to be arithmetic if there exists an integer $k\geq 2$ such that the restriction of $P$ to $\mathfrak{X}(\Lambda)\cong \Hom_{\text{cont}}(\Gamma,\overline{\Q}_p^\times)$ corresponds to a continuous character $P_{k,\epsilon}:\Gamma\to \overline{\Q}_p^\times$ satisfying $P_{k,\epsilon}(\gamma)=\epsilon(\gamma)\gamma^k$, for some finite order character $\epsilon:\Z_p^\times\to \overline{\Q}_p^\times$. We denote by $\mathfrak{X}_{\text{alg}}(R)$ the set of all arithmetic points in $\mathfrak{X}(R)$.
\item An arithmetic point $P\in \mathfrak{X}_{\text{alg}}(R)$ is said to have signature $(k,\epsilon)$ if there exists an integer $k\geq 2$ and a finite character $\epsilon:\Z_p^\times\to \overline{\Q}_p^\times$ such that $P$ lies over the point $P_{k,\epsilon}\in \mathfrak{X}_{\text{alg}}(\Lambda_1)\cong \Hom_{\text{cont}}(\Z_p^\times,\overline{\Q}_p^\times)$ corresponding to the character $P_{k,\epsilon}(y)=y^k\epsilon(y)$ on $\Z_p^\times$. For simplicity, we just write $P=(k,\epsilon)$ and often refer to it as the arithmetic point of weight $k$ and Nebentypus $\epsilon\omega^{-k}$, where $\omega$ denotes the Teichm\"uller character.
\end{enumerate}
\end{definition} 
We note that $\mathfrak{X}(\Lambda)$ has a natural analytic structure induced from the identification $\Hom_{\text{cont}}(\Lambda,\overline{\Q}_p)\cong \Hom_{\text{cont}}(\Gamma,\overline{\Q}_p^\times)$. Moreover, for a given $R$ the natural restriction to $\Lambda$ gives a finite-to-one mapping
\[
\pi:\mathfrak{X}(R)\twoheadrightarrow\mathfrak{X}(\Lambda_1)\twoheadrightarrow\mathfrak{X}(\Lambda),
\]
which allows us to define some analytic charts around all points of $\mathfrak{X}_{\text{alg}}(R)$. Indeed, it is established by Hida that each $P\in \mathfrak{X}_{\text{alg}}(R)$ is unramified over $\mathfrak{X}(\Lambda)$, and consequently there exists a natural section of $\pi$
\[
s_P:U_P\subseteq\mathfrak{X}(\Lambda)\to \mathfrak{X}(R)
\]
defined on a neighborhood $U_P$ of $\pi(P)$. These local sections endow $\mathfrak{X}(R)$ with analytic charts around points in $\mathfrak{X}_{\text{alg}}(R)$. For each $P\in \mathfrak{X}_{\text{alg}}(R)$, a function $\textbf{f}:\mathfrak{U}\to \overline{\Q}_p$ defined on $\mathfrak{U}=s_P(U_P)\subset\mathfrak{X}(R)$ is said to be analytic if $\textbf{f}\circ s_P:U_P\to \overline{\Q}_p$ is analytic. In parallel, an open subset $\mathfrak{U}\subset \mathfrak{X}(R)$ containing some $P\in \mathfrak{X}_{\text{alg}}(R)$ is called an analytic neighborhood of $P$ if $\mathfrak{U}=s_P(U_P)$. For instance, we easily see that each element $\textbf{a}\in R$ gives rise to an analytic function $\textbf{a}:\mathfrak{X}_{\text{alg}}(R)\to \overline{\Q}_p$ defined by $\textbf{a}(P)=P(\textbf{a})$. In general, if $P\in \mathfrak{X}(R)$ unramified over $\mathfrak{X}(\Lambda)$, then each element $\textbf{a}\in R_{(P)}$ gives rise to an analytic function defined on some analytic neighborhood of $P$, where $R_{(P)}$ denotes the localization of $R$ at $P$, and it gives rise to a discrete valuation ring finite and unramified over $\Lambda$. According to the standard usage, we refer to the evaluation $\textbf{a}(P)$ at $P\in \mathfrak{X}_{\text{alg}}(R)$ as the specialization of $\textbf{a}$ at $P$.

For each integer $r\geq 1$, let $$X_1(p^{2r}D^2)=\Gamma_1^0(p^{2r}D^2)\backslash(\h\cup \mathbb{P}^1(\Q))$$ be the compactified modular curve, and $V_r=H^1(X_1(p^rD),\Z_p)$ the singular cohomology group of $X_1(p^rD)$ with values in $\Z_p$. It is well-known that $V_r$ is canonically isomorphic to the parabolic cohomology group $H^1_{\text{par}}(\Gamma_1(p^rD),\Z_p)\subset H^1(\Gamma_1(p^rD),\Z_p)$, which is defined to be the image of the compactly supported cohomology group under the natural map, cf. \cite[p. 358]{hida}. We denote the abstract $\Lambda$-adic Hecke algebra of tame level $D$ by the free polynomial algebra
\[
\mathbb{T}:=\Lambda_1[T_n\mid 1\le n\in \Z]
\]
generated by the Hecke operators $T_n$ over $\Lambda_1$. Since $\mathbb{T}\cong \Z_p[T_n, \Z_p^\times]$, a natural action of $\mathbb{T}$ on $V_r$ is defined by regarding the generator $T_n$ act via the $n$-th Hecke correspondence and elements of $\Z_p^\times$ act via the usual action of the diamond operators. For each pair of positive integer $r_1\geq r_2$ we get a corestriction map $V_{r_1}\to V_{r_2}$, induced by $\Gamma_1(p^{r_1})\hookrightarrow\Gamma_1(p^{r_2})$, which commutes with the action of $\mathbb{T}$. Hence the projective limit
\[
V_\infty:=\lim_{\substack{\leftarrow\\ r\geq 1}}V_r
\]
is naturally endowed with a $\mathbb{T}$-algebra structure. We denote by $V_\infty^{\text{ord}}$ the direct factor of $V_\infty$ cut out by Hida's idempotent $e_{\text{ord}}=\lim_{m\to \infty}U_p^{m!}$, that is,
\[
V_\infty^{\text{ord}}=e_{\text{ord}}\cdot V_\infty,
\]
on which $U_p$ acts invertibly. We note that $V_\infty^{\text{ord}}$ is a $\Lambda$-algebra free of finite rank \cite[p. 209, Theorem 1]{hida}. Moreover, for $\mathscr{L}=\text{Frac}(\Lambda)$, Hida constructed an idempotent $e_{\text{prim}}$ in the image of $\mathbb{T}\otimes_\Lambda \mathscr{L}$ in $\text{End}_{\mathscr{L}}(V_\infty^{\text{ord}}\otimes_\Lambda \mathscr{L})$, which can be regarded as an analogue of the projection to the space of primitive Hecke eigenforms in Atkin-Lehner theory, \cite[p. 250-252]{hidaiwa}. Then we define the universal ordinary parabolic cohomology group of tame level $D$ by the $\mathbb{T}$-agebra
\[
\mathbb{V}^{\text{ord}}:=V_\infty^{\text{ord}}\cap e_{\text{prim}}(V_\infty^{\text{ord}}\otimes_\Lambda \mathscr{L}),
\]
which is a reflexive $\Lambda$-algebra of finite rank and is consequently locally free of finite rank over $\Lambda$. Then the universal $p$-ordinary Hecke algebra of tame level $D$ is defined to be the image $R^{\text{ord}}$ of the homomorphism
\[
h:\mathbb{T}\to\text{End}_{\Lambda_1}(\mathbb{V}^{\text{ord}}).
\] 
Each morphism $\phi:\mathbb{T}^{\text{ord}}\to \Lambda_1$ is naturally equipped with a formal $q$-expansion
\[
\textbf{f}^{\,\text{ord}}_\phi=\sum_{n\geq 1}\textbf{a}_nq^n\in \Lambda_1 [[q]],\;\;\; \textbf{a}_n=\phi(T_n),
\]
which is called a universal $p$-stabilized ordinary form of tame level $D$.

\begin{definition}[Ordinary $p$-stabilized newforms]
For given integers $k\geq 2$ and $r\geq 1$, a Hecke eigenform $f_k^*\in S_k(\Gamma_1(p^rD),\omega_{K/\Q})$ is called an ordinary $p$-stabilized newform if one of the following conditions holds true:
\begin{enumerate}[(i)]
\item $f_k^*$ is a $p$-ordinary Hecke eigenform in $S_k^{\text{new}}(\Gamma_1(p^rD),\omega_{K/\Q})$, where the latter is the subspace of $S_k(\Gamma_1(p^rD),\omega_{K/\Q})$ consisting of newforms.
\item If $r=1$, then there exists a normalized ordinary Hecke eigenform $f_k\in S_k(\Gamma_1(D),\omega_{K/\Q})$ such that 
\[
f_k^*(z)=f_k(z)-\beta_p(f_k)f_k(pz),
\]
where $\beta_p(f_k)$ is the non-unit root of the polynomial $X^2-a_p(f_k)X+p^{k-1}$.
\end{enumerate}
\end{definition}
\begin{Remark}
It follows from the definition that the ordinary $p$-stabilized newforms are literally $p$-ordinary Hecke eigenforms. Indeed, this assertion is trivial in the case of $(i)$. If $f_k^*\in S_k(\Gamma_1(pD),\omega_{K/\Q})$ is taken as in $(ii)$, then for each prime $\ell$, we have
\[
a_\ell(f_k^*)=\begin{cases}
a_\ell(f_k) & \text{if}\;\; \ell \ne p,\\
\alpha_p(f_k) & \text{if}\;\; \ell = p,
\end{cases}
\]
where $\alpha_(f_k)$ is the unit root of the polynomial $X^2-a_p(f_k)X+p^{k-1}=(X-\alpha_p(f_k))(X-\beta_p(f_k))$. For a given $p$-ordinary Hecke eigenform in $S_k(\Gamma_1(D),\omega_{K/\Q})$, this type of $p$-adic normalization process is called the ordinary $p$-stabilization. However, note that each ordinary $p$-stabilized newform $f_k^*\in S_k(\Gamma_1(pD),\omega_{K/\Q})$ is actually an oldform except for $k=2.$ Indeed, if $f_k^*\in S_k^{\text{new}}(\Gamma_1(pD))$, then we have
\[
a_p(f_k^*)=\pm p^{k/2-1}.
\]
\end{Remark}

\noindent The following theorem has been established by Hida:

\begin{Theorem}\emph{\cite[Theorem 2.6]{greenberg}}
Let $r$ be a fixed positive integer and $\textbf{f}^{\,\text{ord}}=\sum_{n\geq 1}\textbf{a}_n q^n\in \Lambda_1[[q]]$ be a universal ordinary $p$-stabilized form of tame level $D$ and character $\omega_{K/\Q}$, in the sense of \cite[p. 91]{hidashimura}. Then for each $P\in \mathfrak{X}_{\text{alg}}(\Lambda_1)$, the specialization
\[
\textbf{f}^{\,\text{ord}}(P)=\sum_{n\geq 1}\textbf{a}_n(P) q^n\in \overline{\Q}_p[[q]]
\]
induces a one-to-one correspondence
\begin{align*}
\{P=&(k,\epsilon)\in \mathfrak{X}_{\text{alg}}(\Lambda_1)\mid 2\le k\in \Z,\epsilon:\Z_p^\times \to \overline{\Q}_p^\times \text{ character of order } p^r\}\\
&\longleftrightarrow \{f_k^*\in S_k(\Gamma_1(p^rD),\epsilon\omega^{-k}\omega_{K/\Q})\}\mid \text{ordinary}\;\; p\text{-stabilized newform}\}.
\end{align*}
\end{Theorem}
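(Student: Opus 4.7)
The plan is to realize the universal $p$-stabilized form as a character of the $p$-ordinary Hecke algebra and then deduce the correspondence from Hida's vertical control theorem relating $\mathbb{V}^{\text{ord}}$ to the ordinary part of classical parabolic cohomology at arithmetic weights. First I would observe that $\textbf{f}^{\,\text{ord}}$ corresponds, by its very construction via $h:\mathbb{T}\to\text{End}_{\Lambda_1}(\mathbb{V}^{\text{ord}})$, to a continuous $\Lambda_1$-algebra homomorphism $\phi_{\textbf{f}}:R^{\text{ord}}\to\Lambda_1$ with $\phi_{\textbf{f}}(T_n)=\textbf{a}_n$; specialization at $P\in\mathfrak{X}_{\text{alg}}(\Lambda_1)$ is then the composition $P\circ\phi_{\textbf{f}}$. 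The problem thus reduces to matching such composite characters of $\mathbb{T}$ with systems of Hecke eigenvalues of classical ordinary $p$-stabilized newforms.

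The key technical input is Hida's vertical control theorem. For a signature $(k,\epsilon)$ with $\epsilon$ of order $p^r$ and $k\ge 2$, write $\mathfrak{p}_P=\ker P\subset\Lambda_1$. The theorem yields a canonical $\mathbb{T}$-equivariant isomorphism
\[
\mathbb{V}^{\text{ord}}/\mathfrak{p}_P\mathbb{V}^{\text{ord}}\otimes_{\Z_p}\overline{\Q}_p\;\cong\; e_{\text{prim}}\,e_{\text{ord}}\, H^1_{\text{par}}\bigl(\Gamma_1(p^rD),\mathcal{L}_{k,\epsilon}\bigr)\otimes_{\Z_p}\overline{\Q}_p,
\]
where $\mathcal{L}_{k,\epsilon}$ denotes the appropriate Betti local system of weight $k$ with Nebentypus $\epsilon\omega^{-k}\omega_{K/\Q}$. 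Via Eichler--Shimura, the right-hand side is identified, as a Hecke module, with the primitive ordinary part of $S_k(\Gamma_1(p^rD),\epsilon\omega^{-k}\omega_{K/\Q})$. Consequently every specialization $P\circ\phi_{\textbf{f}}$ is the eigensystem of a unique classical ordinary $p$-stabilized newform of the prescribed weight and character, furnishing the forward direction of the correspondence.

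Conversely, starting from an ordinary $p$-stabilized newform $f_k^*$ in the relevant space, its Hecke eigenvalues define a character $\theta_{f_k^*}:\mathbb{T}\to\overline{\Q}_p$ which, by the same control isomorphism, factors through $R^{\text{ord}}$. Since $R^{\text{ord}}$ is finite unramified over $\Lambda$ at arithmetic points, the local section $s_P$ introduced in Definition \ref{weigths} produces a unique arithmetic $P\in\mathfrak{X}_{\text{alg}}(\Lambda_1)$ of signature $(k,\epsilon)$ such that $P\circ\phi_{\textbf{f}}=\theta_{f_k^*}$, giving surjectivity. Injectivity follows from strong multiplicity one for $\mathrm{GL}_2$: two arithmetic points inducing the same eigenform must agree on every $T_n$ and on the diamond operators from $\Z_p^\times$, hence coincide as $\Lambda_1$-algebra characters.

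The main obstacle is the vertical control theorem itself, and specifically the compatibility of the two idempotents $e_{\text{ord}}$ and $e_{\text{prim}}$ with the specialization $\otimes_{\Lambda_1,P}\overline{\Q}_p$. The $e_{\text{ord}}$ part rests on Hida's theorem that $V_\infty^{\text{ord}}$ is free of finite rank over $\Lambda$ together with the compatibility of the $U_p$-operator through the corestriction maps $V_{r_1}\to V_{r_2}$; the primitive projector $e_{\text{prim}}$, defined only after tensoring with $\mathscr{L}=\text{Frac}(\Lambda)$, requires an additional Atkin--Lehner-style analysis to guarantee that its image meets $V_\infty^{\text{ord}}$ in a $\Lambda$-lattice whose specialization at $P$ recovers precisely the classical new subspace. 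Once these technical inputs are in place, the asserted bijection is a formal consequence.
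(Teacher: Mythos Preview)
The paper does not prove this theorem at all: it is stated with the attribution ``cf.\ Theorem~2.6 in \cite{greenberg}'' and immediately used, with no proof environment following it. It is quoted as an established result of Hida theory, so there is nothing in the paper to compare your argument against.

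Your sketch is a reasonable outline of how the result is actually proved in Hida's work (control theorem plus Eichler--Shimura plus multiplicity one), and it correctly identifies where the real content lies, namely the vertical control theorem and the compatibility of $e_{\text{ord}}$ and $e_{\text{prim}}$ with specialization. A couple of minor points: the section $s_P$ you invoke is defined for $\mathfrak{X}(R)$ with $R$ a finite $\Lambda$-algebra, not directly for $\mathfrak{X}(\Lambda_1)$, so your surjectivity step as written conflates the weight space of $\Lambda_1$ with that of $R^{\text{ord}}$; and strong multiplicity one gives that two points with the same eigensystem agree on the image of $\mathbb{T}$, but to conclude they agree as points of $\mathfrak{X}_{\text{alg}}(\Lambda_1)$ you also need that the diamond action is recovered from the eigensystem, which is fine but should be said. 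None of this is a genuine gap, since you are merely summarizing a theorem the paper takes as a black box.
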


By applying the above theorem for $r=1$, each $P=(2k,\omega^{2k})\in \mathfrak{X}_{\text{alg}}(\Lambda_1)$ corresponds to an ordinary $p$-stabilized newform $\textbf{f}_{\text{ord}}(P)=f_{2k}^*\in S_{2k}(\Gamma_0(pD),\omega_{K/\Q})$ associated with a $p$-ordinary normalized Hecke eigenform $f_{2k}\in S_{2k}(\Gamma_1(D),\omega_{K/\Q})$ via the ordinary $p$-stabilization. Therefore, for a fixed $P_0=(2k_0,\omega^{2k_0})\in \mathfrak{X}_{\text{alg}}(\Lambda_1)$ with $k_0>\!\!>0$, we may regard $\textbf{f}_{\text{ord}}\in \Lambda_1[[q]]$ as a $\Lambda$-adic elliptic cusp form, and we consequently obtain a $p$-adic analytic family of ordinary $p$-stabilized newforms $\{\textbf{f}_{\text{ord}}(P)=f_{2k}^*\}$ parametrized by $P=(2k,\omega^{2k})\in \mathfrak{X}_{\text{alg}}(\Lambda_1)$ with $k_0\le k\in \Z$. 

\subsection{On $\Lambda$-adic Picard modular forms}
In the following, write $K_p=K\otimes \Q_p$ and $\mathcal{O}_p=\mathcal{O}_K\otimes \Z_p$.
\begin{definition}
Let 
\[
\mathfrak{X}_p=\lim_{\substack{\longleftarrow\\m}} [\Z/(p-1)p^{m-1}\Z\times \Z/(p-1)p^{m-1}\Z]
\]
be the space of $p$-adic weights.
\end{definition}
Recall the embedding $\overline{\sigma}:K\hookrightarrow \Cc$. If $\kappa\in \mathfrak{X}_p$, then $\overline{\sigma}^\kappa$ is a well-defined locally $\Q_p$-analytic homomorphism of $\mathcal{O}_p^\times$ to itself, but note that not every such homomorphism is a $\overline{\sigma}^\kappa$ for some $\kappa\in \mathfrak{X}_p$. 
For $\kappa\in \mathfrak{X}_p$, a $p$-adic Picard modular form à la Serre is defined as a formal Fourier-Jacobi expansion which is the limit of $q$-expansions of classical Picard modular forms of integral weight, cf. \cite[p. 60]{goren} for the analogous definition in the inert case. For a tame level $N$, we denote the space of $p$-adic Picard modular forms of level $N$ and weight $\kappa\in \mathfrak{X}_p$ as $\mathscr{M}^{Serre}(\kappa,N, K_p)$. There is another possible construction of $p$-adic modular forms, which is more geometric, à la Katz. Here is an overview of the construction: first of all, recall that as $p$ is split in $K$, the ordinary locus of the Picard modular surface is open and dense \cite[Lemma 8.10]{hidashimura}. Let 
\[
V_r^{(m)}=H^0(\overline{Ig}(p^r)^{(m)},\mathcal{O}_{\overline{Ig}(p^r)^{(m)}})
\]
be the ring of regular functions on the truncated Igusa scheme of level $p^r$ over the completion of the ordinary locus, $\overline{Ig}(p^r)^{(m)}$. Here $\mathcal{O}_{\overline{Ig}(p^r)^{(m)}}$ is the sheaf of regular functions on $\overline{Ig}(p^r)$. The latter object's definition is detailed in \cite[p. 777]{hsieh}, but let us just recall that this is the étale cover of the truncation modulo $p^m$ of the ordinary locus which classifies ordinary points on the Picard scheme, together with a trivialization of their group of $p^r$-torsion points. Now we also let
\[
V^{(m)}=\lim_{\substack{\rightarrow\\r}}V_r^{(m)},\;\;\; V=\lim_{_{\substack{\leftarrow\\m}}} V^{(m)}.
\]
We call $V$ the space of Katz $p$-adic modular forms of all weights. Recall that on $\overline{Ig}(p^r)^{(m)}$, we have an action of $\Delta(p^r)=(\mathcal{O}_K/p^r\mathcal{O}_K)^\times$ via
\[
\gamma(\underline{A},\epsilon^{(m)}_r)=(\underline{A},\epsilon^{(m)}_r\circ \gamma^{-1}).
\]
Consequently we get actions of $\Delta(p^r)$ and $\Delta:=\lim_{\leftarrow}\Delta(p^r)$ on $V^{(m)}$ and on $V$, respectively, defined by $\gamma(f)=f\circ \gamma^{-1}$. Let $\kappa\in \mathfrak{X}_p$ and define, for a fixed tame level $N$,
\[
\mathscr{M}^{Katz}(\kappa,N,\mathcal{O}_p)=V(\overline{\sigma}^\kappa)=\{f\in V\mid \gamma(f)=\overline{\sigma}^\kappa(\gamma)\cdot f\;\; \forall\, \gamma\in \Delta\}.
\]
\begin{Theorem}
\label{15}
For $\kappa\in \mathfrak{X}_p$, there is a natural isomorphism
\[
\mathscr{M}^{Serre}(\kappa,N, \mathcal{O}_p)\cong \mathscr{M}^{Katz}(\kappa,N, \mathcal{O}_p).
\]
\end{Theorem}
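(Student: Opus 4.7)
The plan is to establish the isomorphism by producing compatible $q$-expansion (here: Fourier-Jacobi expansion) maps on both sides and invoking density of classical forms. First I would treat the classical case as the anchor: for each integer weight $k \ge 0$ and each $R$-point $\underline{A}$ of the ordinary Igusa scheme $\overline{Ig}(p^r)^{(m)}$, the trivialization of the $p^r$-torsion along the $\overline{\mathfrak{p}}$-component canonically produces a nowhere vanishing section $\omega$ of $\omega_{\overline{\mathfrak{p}}}$ via duality. This gives a natural embedding of the space of classical geometric Picard modular forms $\mathscr{M}^{\mathrm{geom}}(k,\mathcal{U}_f,\mathcal{O}_p)$ (with $\mathcal{U}_f$ of tame level $N$ and hyperspecial at $p$) into $V^{(m)}$ compatible with the $\Delta$-action: a classical $f$ of weight $k$ lands in the $\overline{\sigma}^{k}$-eigenspace, where $k \in \Z \hookrightarrow \mathfrak{X}_p$. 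Composed with the appropriate reduction maps, these embeddings are compatible as $m$ and $r$ vary, producing the natural map from Katz forms to Serre forms by reading off the Fourier-Jacobi expansion at the standard cusp.

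The Katz-to-Serre arrow. Given $f \in \mathscr{M}^{\mathrm{Katz}}(\kappa,N,\mathcal{O}_p)$, its Fourier-Jacobi expansion is defined by evaluating $f$ on the Mumford object at the cusp associated to a fractional ideal $\mathfrak{a}$; this object carries a canonical trivialization of its $p^\infty$-torsion, hence lies in $\overline{Ig}$ by construction. To show that the resulting formal series is a $p$-adic limit of classical Fourier-Jacobi expansions, one writes $f$ as the inverse limit of its reductions $f_m \in V^{(m)}(\overline{\sigma}^\kappa)$, and approximates each $f_m$ by a classical form of weight $\equiv \kappa \pmod{(p^2-1)p^m}$. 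The key input here is that on the truncated Igusa tower mod $p^m$, classical forms of all sufficiently large weights in a fixed congruence class generate the $\overline{\sigma}^\kappa$-eigenspace of $V^{(m)}$ (a standard consequence of the ampleness of $\omega_{\overline{\mathfrak{p}}}$ and of the existence of a lift of the Hasse invariant in characteristic $p$, which for Picard surfaces is available away from $D$).

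The Serre-to-Katz arrow. Starting from a Serre $p$-adic form $F$, by definition there is a sequence of classical $f_i$ of integer weights $k_i \to \kappa$ whose Fourier-Jacobi expansions converge $p$-adically to that of $F$. Each $f_i$ produces an element of $V^{(m_i)}$ via the embedding above, and the Fourier-Jacobi expansion principle on the ordinary locus (injectivity of the map from sections of $\omega_{\overline{\mathfrak{p}}}^{k}$ on $\overline{Ig}(p^r)^{(m)}$ to their Fourier-Jacobi expansions at all cusps) ensures that the sequence $(f_i)$ is Cauchy in $V$, with limit an element $\widetilde{F} \in V(\overline{\sigma}^\kappa)$ whose Fourier-Jacobi expansion equals that of $F$. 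Well-definedness (independence of the approximating sequence) again reduces to the Fourier-Jacobi expansion principle. The two maps are inverse to each other by construction, giving the desired isomorphism.

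The main obstacle is the Fourier-Jacobi expansion principle in the $\GU(2,1)$ setting in mixed characteristic: one needs that a section of $\omega_{\overline{\mathfrak{p}}}^{\otimes k}$ on $\overline{Ig}(p^r)^{(m)}$ is determined by its Fourier-Jacobi expansion at a single cusp on each connected component, and that such expansions are $\mathcal{O}_p$-integral. For the Picard surface this rests on the irreducibility of the Igusa tower over the ordinary locus and on Larsen's toroidal compactification (away from $D$), both of which are available in this setting and are invoked in \cite{goren}. Once this is in hand, the rest of the argument is the standard $p$-adic density-and-injectivity dance, and I would conclude by quoting \cite[p.~60]{goren}.
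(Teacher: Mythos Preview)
The paper does not actually prove this statement: its entire proof is the single line ``See \cite[p.~62]{goren}.'' So there is no argument in the paper to compare your sketch against. Your outline is a correct and standard account of how such Serre--Katz comparison theorems are proved (embed classical forms in the Igusa tower, use a lift of the Hasse invariant together with ampleness of $\omega_{\overline{\mathfrak{p}}}$ to get density of classical forms modulo $p^m$, and invoke the Fourier--Jacobi expansion principle over the ordinary locus, which in turn rests on irreducibility of the Igusa tower and the existence of good toroidal compactifications). This is indeed the shape of the argument in the de Shalit--Goren reference, so your sketch is faithful to the source the paper cites; you have simply written out what the paper chose to black-box.
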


\begin{proof}
This is the extension of the result \cite[Proposition 5.8]{goren} formulated for $p$ inert, by the $q$-expansion principle for $p$ split, proved in \cite[Theorem 3.11]{hsieh}. Indeed, the proof of \cite[Proposition 5.8]{goren} is independent of the behavior of $p$ in $K$.
\end{proof}

Analogously, a $p$-adic cusp form is defined as a limit of classical Picard cusp forms, and Theorem \ref{15} restricts to cuspidal forms. Later, we will usually drop $\mathcal{O}_p$ from the notation, and we will consider, for $\chi_\kappa$ a $p$-adic Hecke character of weight $\kappa$, the spaces
\[
\mathscr{S}(\kappa,N, \chi_\kappa)\subset \mathscr{M}(\kappa,N, \chi_\kappa)
\]
of $p$-adic Picard modular forms of central character $\chi_\kappa$, meaning that the limit is taken on Picard modular forms of central characters $\chi_{k_i}$ converging to $\chi_{\kappa}$ $p$-adically.

We introduce the notion of $\Lambda$-adic Picard modular forms from the point of view of Fourier expansions. Recall that in section \ref{arithmetic} we introduced the notions of arithmetic theta functions, following Shimura. 

\begin{definition}[$\Lambda$-adic theta function]
For $R$ a $\Lambda_1$-algebra, finite flat over $\Lambda$, define the space of $\Lambda$-adic theta functions $\mathcal{T}_{d}(R)$ of degree $d$ simply as
\[
\mathcal{T}_{d}^{\emph{ar}}\otimes_{\overline{\Q}_p} R.
\]
Furthermore, write
\[
\mathcal{T}(R)=\mathcal{T}^{\emph{ar}}\otimes_{\overline{\Q}_p} R,
\]
for the space of theta functions of any degree.
If $\textbf{g}\in \mathcal{T}_{d}(R)$  and $P\in \mathfrak{X}(R)$ then define $\textbf{g}(P)\in \mathcal{T}_{d}^{\emph{ar}}$ by applying $P$ on the second component, that is, if $\textbf{g}=\theta\otimes _{\overline{\Q}_p} \textbf{a}$, then
\[
\textbf{g}(P)=\theta\otimes _{\overline{\Q}_p} \textbf{a}(P).
\]
\end{definition}

\begin{Remark}
In greater generality, since we have defined our $p$-adic weight space as $\mathfrak{X}_p=(\Z/(p-1)\Z\times \Z_p)^2$, we could consider the bigger space $\mathcal{O}_{\mathfrak{X}_p}$ instead of $\Lambda_1$, but for our purposes this will suffice. Indeed we have the natural inclusions $$\Lambda\hookrightarrow \Lambda_1\hookrightarrow \mathcal{O}_{\mathfrak{X}_p}.$$
\end{Remark}
\begin{definition}[$\Lambda$-adic Picard modular form]
Let $R$ be a $\Lambda_1$-algebra, finite flat over $\Lambda$. Pick $P_0=(k_0,\omega^{k_0})\in \mathfrak{X}_{\emph{alg}}(R)$. A formal Fourier-Jacobi expansion
\[
\textbf{F}=(\textbf{F}_{\mathfrak{a}})_{\mathfrak{a}\in \mathcal{A}}=\Big(\sum_{n\geq 0} \textbf{g}_{n/\emph{N}(\mathfrak{a}),\mathfrak{a}}(w)q^n\Big)_{\mathfrak{a}\in \mathcal{A}}\in \mathcal{T}(R_{(P_0)})[[q]]
\]
is called a $\Lambda$-adic Picard modular form of tame level $D$ if there exists an analytic neighborhood $\mathfrak{U}_0$ of $P_0$ such that for each arithmetic point $P=(k,\epsilon\omega^k)\in \mathfrak{U}_0$ with $k\geq k_0$ and $\epsilon:\Z_p^\times\to \overline{\Q}_p^\times$ a finite character, the specialization
\[
\textbf{F}(P):=\Big(\sum_{n\geq 0} \textbf{g}_{n/\emph{N}(\mathfrak{a}),\mathfrak{a}}(w)(P)q^n\Big)_{\mathfrak{a}}\in \overline{\Q}_p[[q]]
\]
gives rise to the Fourier expansion of a holomorphic Picard modular form in $\mathscr{M}(k,\Gamma_1(p^rD))$, for some $r$. In particular, a $\Lambda$-adic Picard modular form $\textbf{F}$ is said to be cuspidal if $\textbf{F}\in \mathscr{S}(k,\Gamma_1(p^rD))$ for almost all $P\in \mathfrak{U}_0$.
\end{definition}

If there exists a $\Lambda$-adic Picard modular form $\textbf{F}\in \mathcal{T}(R_{(P_0)})[[q]]$, then every coefficient $\textbf{g}_{n/\emph{N}(\mathfrak{a}),\mathfrak{a}}(w)\in \mathcal{T}_{n/\emph{N}(\mathfrak{a})}(R_{(P_0)})$ of $\textbf{F}$ gives rise to an analytic function $\mathfrak{U}_0\to \mathcal{T}_{n/\emph{N}(\mathfrak{a})}^{\textit{ar}}$. Hence every specialization $\textbf{F}(P)$ gives a holomorphic Picard modular form whose Fourier-Jacobi coefficients are $p$-adic analytic functions on $\mathfrak{U}_0$, cf. \S5.1 for the analogous elliptic case.

\begin{Remark}
Technically we should call $\mathcal{T}_{d}(R)$ the space of $R$-adic theta functions, and accordingly define $R$-adic Picard modular forms, but in the following we allow this slight abuse of notation.
\end{Remark}

\subsection{The $p$-adic lift}
For a $p$-adic elliptic modular form $f$, let $\mathcal{L}^*(f)$ be the formal series with Fourier-Jacobi coefficients
\[
g_{n/N(\mathfrak{a}),\mathfrak{a}}^*(w)=\kappa(\Omega_0)^{-1}\varepsilon(\overline{\mathfrak{a}}/\mathfrak{a})\sum_{\substack{\mathfrak{b}\in \text{Cl}_{K}\\ (\mathfrak{b},p)=1}}(\varepsilon^3\chi_\kappa^{-1})(\mathfrak{b})\kappa(\text{N}(\mathfrak{b}))^{1/2}g_{n/N(\mathfrak{a}),\mathfrak{a},\mathfrak{b}}^*(w),
\]
where
\[
g_{n/N(\mathfrak{a}),\mathfrak{a},\mathfrak{b}}^*(w)=T_{n}^*(f(\tau)\vartheta_{\overline{\mathfrak{b}}}(n\delta w,\tau))|_{\tau=-\overline{\tau}_{\mathfrak{a},\mathfrak{b}}},
\]
and $\kappa(\Omega_0)$ is the $p$-adic period associated to the $p$-adic weight $\kappa$ by \cite[p. 436.]{greenberg}



\begin{Proposition}
\label{prop6}
For $\kappa$ even, the rule $f\mapsto \mathcal{L}^*(f)$ defines a morphism of Hecke modules
\[
\mathcal{L}^*:S^{Serre}_{\kappa-1}(\Gamma_1(D),\omega_{K/\Q},\overline{\Z}_p)\to \mathscr{S}^{Serre}(\kappa,\Gamma_1(D),\chi_\kappa).
\]
\end{Proposition}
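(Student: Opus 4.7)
The plan is to exhibit $\mathcal{L}^*(f)$ as the $p$-adic limit of arithmetic specializations of the classical $p$-modified lift, and then deduce Hecke equivariance by passing to the limit. More precisely, for $f\in S^{Serre}_{\kappa-1}(\Gamma_1(D),\omega_{K/\Q},\overline{\Z}_p)$, I would choose a sequence of classical ordinary $p$-stabilized cusp forms $f_{k_i-1}\in S_{k_i-1}(\Gamma_1(p^{r_i}D),\omega_{K/\Q})$ with weights $k_i-1$ and finite-order nebentypus characters converging $p$-adically to $\kappa-1$ and to the trivial character respectively, so that the $q$-expansions $f_{k_i-1}\to f$ in $\overline{\Z}_p[[q]]$. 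This is possible since $S^{Serre}_{\kappa-1}(\Gamma_1(D),\omega_{K/\Q},\overline{\Z}_p)$ is defined as the $p$-adic completion of the classical subspace.

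Next, I would apply the arithmetic $p$-modified lift $\mathcal{L}^{p,\mathrm{ar}}_{k_i,\varepsilon,\chi_{k_i}}=\Omega_0^{-k_i}\mathcal{L}^p_{k_i,\varepsilon,\chi_{k_i}}$ to each $f_{k_i-1}$ and read off the Fourier-Jacobi coefficients via the formula of the Fourier-Jacobi theorem of section \ref{kudla}. By the corollary to that theorem, each $\mathcal{L}^{p,\mathrm{ar}}(f_{k_i-1})$ belongs to $\mathscr{S}^{\mathrm{ar}}(k_i,\Gamma_1(p^{r_i}D),\chi_{k_i},\overline{\Z}[1/D])$, i.e. its Fourier-Jacobi coefficients are arithmetic theta functions with values in $\iota_\infty(\mathcal{O}_{K^{\mathrm{ab}}})$, hence make sense $p$-adically via the embedding $\iota_p\iota_\infty^{-1}$. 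Explicitly, each coefficient equals
\[
\Omega_0^{-k_i}\varepsilon(\overline{\mathfrak{a}}/\mathfrak{a})\sum_{\substack{\mathfrak{b}\in \mathrm{Cl}_K\\(\mathfrak{b},p)=1}}(\varepsilon^3\chi_{k_i}^{-1})(\mathfrak{b})\mathrm{N}(\mathfrak{b})^{k_i/2}T_n\bigl(f_{k_i-1}(\tau)\vartheta_{\delta\overline{\mathfrak{p}^{r_i}\mathfrak{b}}}(n\delta w,\tau)\bigr)\big|_{\tau=\tau_{\mathfrak{a},\mathfrak{b}}},
\]
and the evaluation $\tau=\tau_{\mathfrak{a},\mathfrak{b}}\in K\cap\h$ is a CM point whose $p$-adic interpretation is governed by the complex period $\Omega_0$ and its $p$-adic counterpart $\kappa(\Omega_0)$ in the sense of Greenberg. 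By construction, as $k_i\to \kappa$ in $\mathfrak{X}_p$ and $f_{k_i-1}\to f$, each factor converges $p$-adically: the theta functions $\vartheta_{\delta\overline{\mathfrak{p}^{r_i}\mathfrak{b}}}$ are constant in the weight variable, the Hecke operator $T_n$ acts continuously on the $p$-adic $q$-expansion of $f$, and the Hecke character and norm factors converge by the very definition of $\chi_\kappa$ and of $\kappa(\mathrm{N}(\mathfrak{b}))^{1/2}$. The quotient by the period $\Omega_0^{k_i}$ becomes, in the limit, division by $\kappa(\Omega_0)$, recovering the given formula for $\mathcal{L}^*(f)$ coefficient-wise.

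The continuity of each ingredient then shows that the $p$-adic Fourier-Jacobi expansions $\Omega_0^{-k_i}\mathcal{L}^p(f_{k_i-1})$ converge coefficient-by-coefficient in $\overline{\Z}_p$ to the formal expansion $\mathcal{L}^*(f)$. Since each approximating term is classical and cuspidal, the limit is a Serre $p$-adic Picard cusp form in the sense of the definition recalled above, i.e. $\mathcal{L}^*(f)\in \mathscr{S}^{Serre}(\kappa,\Gamma_1(D),\chi_\kappa)$. Hecke equivariance is then inherited from the classical case: Theorem \ref{peigen} gives $\mathcal{L}^p(T_q f_{k_i-1})=\lambda^p(\mathfrak{q})\mathcal{L}^p(f_{k_i-1})$ for eigenforms, and since the eigenvalues $\lambda^p(\mathfrak{q})$ are polynomial expressions in $a_q(f_{k_i-1})$ and $q^{k_i/2}$, these again converge $p$-adically; by linearity and density of eigenforms the identity extends to the full $p$-adic module, yielding the required morphism of Hecke modules.

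The main obstacle I anticipate is the careful $p$-adic interpolation of the archimedean ingredients, namely the fractional power $\mathrm{N}(\mathfrak{b})^{k/2}$ and the period quotient $\Omega_0^{-k}$: these do not a priori lie in $\overline{\Z}_p$ along a single branch and must be interpreted through $\kappa(\mathrm{N}(\mathfrak{b}))^{1/2}$ and $\kappa(\Omega_0)^{-1}$ via the CM period machinery of Katz--Hida--Tilouine used by Greenberg. Once one grants that these $p$-adic substitutes interpolate the archimedean ones at arithmetic points (which is exactly the content of \cite[p.~436]{greenberg}), the rest reduces to routine continuity of Hecke operators and theta functions on $q$-expansions.
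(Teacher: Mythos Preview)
Your approach is essentially the paper's: approximate $f$ by classical cusp forms, apply the arithmetic $p$-modified lift termwise, and show that each ingredient of the Fourier--Jacobi formula converges $p$-adically, with Hecke equivariance inherited from Theorem~\ref{peigen} by continuity.

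A few imprecisions are worth flagging. First, you should keep the level exponent $r$ fixed rather than letting $r_i$ vary: the theta functions $\vartheta_{\delta\overline{\mathfrak{p}^{r}\mathfrak{b}}}$ in the target formula involve a fixed $r$, and allowing $r_i\to\infty$ would make them move. The paper simply takes $f_i\in S_{k_i-1}(\Gamma_1(p^rD),\omega_{K/\Q},\overline{\Z}_p)$. Second, there is no need to restrict to ordinary $p$-stabilized approximants; a general Serre $p$-adic form need not be a limit of ordinary ones, and the argument does not use ordinarity. Third, the paper inserts one small but necessary step you omit: after choosing the $f_i$, it multiplies by powers of $E_{p-1}$ so that the archimedean weights $k_i\to\infty$, which is needed because the classical $p$-modified lift is only defined for $k\ge 6$. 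Finally, the continuity of $T_n$ for $p\mid n$ is justified in the paper precisely because $p$ divides the level of the $f_i$ (so that $T_p$ is really $U_p$); your phrase ``$T_n$ acts continuously'' should record this. With these adjustments your outline matches the paper's proof, including the appeal to the Hecke-character family lemma for the convergence of $\chi_{k_i}\to\chi_\kappa$.
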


We postpone the proof of this proposition to the subsection \ref{5.4}.

\subsection{The $\Lambda$-adic lift}
\label{5.4}
Now we fix $k_0\geq 6$ an even integer, and suppose that $f_0\in S_{k_0-1}(\Gamma_1(D),\omega_{K/\Q},\overline{\Z}_p)$ is a normalized $T_p$-ordinary Hecke eigenform. By the discussion in the previous section, its ordinary $p$-stabilization $f_0^*$ is a Hecke eigenform with the same eigenvalues of $f$ at all primes except $p$, where $f_0^*$ has eigenvalue $\alpha_p$, the latter being the unique root of the polynomial $X^2-a_pX+p^{k_0-2}$ which is a $p$-adic unit. Additionally, we know that $f_0^*$ is part of a Hida family $\mathfrak{F}$, which means that $P_{k_0-1,\omega^{k_0-1}}\circ \mathfrak{F}=f_0^*$. We want to define a $\Lambda$-adic lift of $\mathfrak{F}$, i.e. a formal Fourier-Jacobi expansion with coefficients in $\Lambda$, whose specializations at an arithmetic prime $P_k$ is the lift of the specialization of $\mathfrak{F}$ at $P_{k-1}$. Concretely, for each $j\in \Z/pZ$ and for any  $k_0\le k\equiv j\mod (p-1)$, we define the $\Lambda$-adic Fourier-Jacobi expansion $\mathfrak{L}_{j}(\mathfrak{F})$ by
\begin{equation}
\label{lambda}
\mathfrak{g}_{n/\text{N}(\mathfrak{a}),\mathfrak{a}}^{j}(w)=\varepsilon(\overline{\mathfrak{a}}/\mathfrak{a})\sum_{\substack{\mathfrak{b}\in \text{Cl}_{K}\\ (\mathfrak{b},p)=1}}(\varepsilon^3\Xi^{-1}\omega_{\mathfrak{p}}^{-1}\omega_{\overline{\mathfrak{p}}}^{j-k_0-1})(\mathfrak{b})\mathfrak{g}_{n/\text{N}(\mathfrak{a}),\mathfrak{a},\mathfrak{b}}(w),
\end{equation}
where
\[
\mathfrak{g}_{n/\text{N}(\mathfrak{a}),\mathfrak{a},\mathfrak{b}}(w)=T_{n}^*(\mathfrak{F}(\tau)\star\vartheta_{\overline{\mathfrak{b}}}(n\delta w,\tau))|_{\tau=-\overline{\tau}_{\mathfrak{a},\mathfrak{b}}},
\] 
$T_{n}^*$ now are the adjoint universal Hecke operators \cite[p. 209]{hida}, \cite[Theorem 2.1.14]{ohta}, acting on the space of $\Lambda$-adic modular forms and $\star$ is the convolution introduced by Hida, cf. \cite[p. 200]{hida}.
Recall also that by \cite[p. 93]{hidashimura}, it is possible to evaluate a given ordinary $\Lambda$-adic form at a CM-point: indeed, the underlying elliptic curve is ordinary, as $p$ splits in $K$.
Finally, $\Xi$ is a $\Lambda$-adic character which interpolates the Hecke characters $\chi_k|\cdot|_K^{-k/2}$ as $k$ varies. To define this, we adapt the strategy of Collins in \cite[Lemma 5.2.1]{collins}:
\begin{Lemma}
Let $k_0$ be a positive even integer as above and $\chi_{k_0}:\mathbb{I}_K/K^\times\to \Z_{p^{e}}^\times$ algebraic Hecke character for $K$ of weight $k_0$, where $\Z_{p^e}$ is the ring of integers of a finite extension $F/\Q_p$. Then there exists a family of characters $\Xi$ with values in $\mathcal{O}_F[[X]]$ such that if we set $\xi_{k}=P_{k}\circ \Xi$ for a positive even integer, we have:
\begin{enumerate}[(i)]
\label{hecke}
\item For every $k$, $\xi_k$ is a character of infinity type $(0,-k)$.
\item $\xi_{k_0}=\chi_{k_0}|\cdot|_K^{-k_0/2}$.
\item For $k\equiv k_0\mod (p-1)$, $\xi_k$ has the same conductor and same finite type as $\chi_{k_0}$; for $k\not \equiv k_0\mod (p-1)$ the finite types differ by powers of the Teichm\"uller character $\omega_{\overline{\mathfrak{p}}}$.
\end{enumerate}
\end{Lemma}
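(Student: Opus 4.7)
The plan is to define $\Xi$ as the product of the $p$-adic avatar of $\chi_{k_0}$ with an auxiliary $\Lambda$-adic factor that raises the weight from $k_0$ to the variable $\kappa$. First, using the correspondence recalled in Section 1, I would pass from $\chi_{k_0}$ to its $p$-adic avatar $\chi_{k_0, p}:\A_K^\times/K^\times \to \mathcal{O}_F^\times$, whose local components at the split primes $\mathfrak{p},\overline{\mathfrak{p}}$ satisfy $\chi_{k_0,p,\mathfrak{p}}(x) = x^{k_0/2}$ and $\chi_{k_0,p,\overline{\mathfrak{p}}}(x) = x^{-k_0/2}$ on a neighborhood of $1$. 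This serves as the ``base case'' at $\kappa = k_0$.

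Next, I would fix an auxiliary algebraic Hecke character $\psi$ of $K$ of weight $2$---for instance the square of a canonical weight-$1$ CM Hecke character attached to $K$, available by the classical theory of CM elliptic curves---together with its $p$-adic avatar $\psi_p$. Decomposing $\psi_p = \omega_\psi\cdot\langle\psi_p\rangle$ via the Teichmüller splitting $\mathcal{O}_{F,p}^\times=\mu\times(1+p\mathcal{O}_{F,p})$, I would then set
\[
\Xi(\alpha) := \chi_{k_0,p}(\alpha)\cdot\omega_\psi(\alpha)^{(\kappa-k_0)/2}\cdot\langle\psi_p\rangle(\alpha)^{(\kappa-k_0)/2},
\]
where $(\kappa-k_0)/2$ makes sense in $\Z_p$ because $p$ is odd, the Teichmüller factor depends only on $(\kappa-k_0)/2\pmod{p-1}$, and the third factor is interpreted $\Lambda$-adically through the identification $(1+X)\leftrightarrow \gamma^\kappa$ applied to $1+p\mathcal{O}_{F,p}$, producing a value in $\mathcal{O}_F[[X]]^\times$.

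To verify the stated properties: $(ii)$ is immediate since at $\kappa=k_0$ both auxiliary factors are trivial, giving $\xi_{k_0}=\chi_{k_0,p}$; $(i)$ holds because the specialization $\xi_k=\chi_{k_0,p}\cdot\psi_p^{(k-k_0)/2}$ is the product of characters of weights $k_0$ and $k-k_0$, hence of weight $k$; and for $(iii)$, the Teichmüller factor $\omega_\psi^{(k-k_0)/2}$ is trivial exactly when $(k-k_0)/2\equiv 0\pmod{p-1}$, i.e.\ $k\equiv k_0\pmod{2(p-1)}$, so in this range $\xi_k$ and $\chi_{k_0}$ share the same conductor and finite type, while in general the local components of $\omega_\psi$ at $\mathfrak{p}$ and $\overline{\mathfrak{p}}$ account precisely for the twists by $\omega_{\mathfrak{p}}$ and $\omega_{\overline{\mathfrak{p}}}$ claimed in the statement, the prime-to-$p$ part being unchanged if $\psi$ is chosen unramified outside $p$.

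The main obstacle is the careful choice of the auxiliary weight-$2$ character $\psi$: one needs $\psi_p$ to be trivial on $K^\times$ (so that $\Xi$ descends to a character of $\A_K^\times/K^\times$, which can then be checked by density of the arithmetic specializations) and its ramification to be controlled at $\mathfrak{p},\overline{\mathfrak{p}}$ in a way compatible with the conductor assertion of $(iii)$. Both requirements are furnished by the CM theory of the imaginary quadratic field $K$: the triviality on $K^\times$ is automatic for any genuine Hecke character, and the local control at $p$ is obtained by taking $\psi$ attached to an elliptic curve with CM by $\mathcal{O}_K$ having good reduction at the split prime $p$. This is essentially the strategy of Collins \cite{collins}, adapted to the explicit conductor framework of the present paper.
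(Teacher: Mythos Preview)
Your overall strategy—multiply $\chi_{k_0}$ by a variable power of an auxiliary weight-shifting Hecke character—is the same as the paper's. The gap is in your handling of the Teichm\"uller factor. The term $\omega_\psi(\alpha)^{(\kappa-k_0)/2}$ cannot be made into an element of $\mathcal{O}_F[[X]]$: since $\omega_\psi$ lands in prime-to-$p$ roots of unity, the assignment $k\mapsto \omega_\psi^{(k-k_0)/2}$ is only well-defined modulo $p-1$ and is locally constant, not $p$-adically analytic in $\kappa$. Your remark that it ``depends only on $(\kappa-k_0)/2\pmod{p-1}$'' is precisely the obstruction, not its resolution. With a nontrivial $\omega_\psi$ your $\Xi$ lives at best in $\mathcal{O}_F[[\Z_p^\times]]$, not in the single power series ring the lemma asks for.

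The paper circumvents this by an explicit choice of auxiliary character whose $p$-adic avatar already takes values in the group of principal units $\Gamma'=(1+\pi)^{\Z_p}$: one sets $\alpha(x)=x_{\mathfrak{p}}^{-1}\omega_{\mathfrak{p}}(x_{\mathfrak{p}})=\langle x_{\mathfrak{p}}\rangle^{-1}$ on local units, and similarly $\overline{\alpha}$. Then the $\Lambda$-adic lift $\mathcal{A}$ is simply the composition of $\alpha^{1/2}$ with the tautological embedding $\Gamma'\hookrightarrow \mathcal{O}_F[[\Gamma']]^\times$, and $\Xi=\chi_{k_0}\alpha^{k_0/2}\overline{\alpha}^{-k_0/2}\mathcal{A}^{-1}\overline{\mathcal{A}}$ is genuinely $\mathcal{O}_F[[X]]$-valued. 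The variation of finite type in (iii) then comes not from any $p$-adic Teichm\"uller component (there is none) but from the \emph{classical} finite type of $\alpha$, which is $\omega_{\mathfrak{p}}^{-1}$: one has $\xi_k=\chi_{k_0}\alpha^{(k_0-k)/2}\overline{\alpha}^{(k-k_0)/2}$, whose finite type differs from that of $\chi_{k_0}$ by $\omega_{\mathfrak{p}}^{(k-k_0)/2}\omega_{\overline{\mathfrak{p}}}^{(k_0-k)/2}$.

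In short, you are conflating two distinct objects: the $p$-adic Teichm\"uller component $\omega_\psi$ of $\psi_p$ (which must be \emph{trivial} for $\Xi$ to land in $\mathcal{O}_F[[X]]$) and the classical finite type of $\psi$ (which must be a \emph{nontrivial} power of $\omega_{\mathfrak{p}},\omega_{\overline{\mathfrak{p}}}$ for (iii) to hold). A generic CM character attached to an elliptic curve with good reduction at $p$ will not have trivial $p$-adic Teichm\"uller component, and your discussion of the ``careful choice'' of $\psi$ addresses neither this requirement nor the precise form of its finite type. (Incidentally, in the paper's convention ``weight $1$'' means infinity type $(-1/2,1/2)$, which is not algebraic, so the phrase ``square of a weight-$1$ CM Hecke character'' needs reinterpretation.)
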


The proof of this lemma will follow once we have defined a certain $p$-adic Hecke character, and the family passing through it.
Let $\alpha:\mathbb{I}_{K}/K^\times\to \overline{\Q}_p^\times$ be the algebraic Hecke character such that, for $x=(x_v)\in \prod \mathcal{O}^\times_{K_v}$, $\alpha(x)=x_{\mathfrak{p}}^{-1}\omega_{\mathfrak{p}}(x_{\mathfrak{p}})$, where $\omega_{\mathfrak{p}}:(\mathcal{O}_{K}/\mathfrak{p})^\times\to \Z_p^\times$ is the canonical Teichm\"uller character for $\mathcal{O}_{K}$. The character $\alpha$ has infinity type $(1,0)$, conductor $\mathfrak{p}$ and finite type $\omega^{-1}_{\mathfrak{p}}$ (this is a consequence of the convention adopted), see \cite[\S 5.2]{collins} for details.

Remark that if we use $\overline{\mathfrak{p}}$ instead of $\mathfrak{p}$, we get a character of infinity type $(0,1)$, conductor $\overline{\mathfrak{p}}$ and finite type $\omega_{\overline{\mathfrak{p}}}^{-1}$. Indeed, this is simply $\overline{\alpha}:=\alpha\circ \tilde{c}$, where $\tilde{c}:\mathbb{I}_{K}/K^\times\to \mathbb{I}_{K}/K^\times$ is the automorphism on ideles induced by the non-trivial automorphism $c$ of $K$. Clearly $\overline{\alpha}(\mathfrak{a})=\alpha(\overline{\mathfrak{a}})$.

We may now use the character $\overline{\alpha}$ to define a family of characters. In general, if we are given a character $\mathbb{I}_{K}/K^\times\to \Gamma$ for $\Gamma$ the $p$-profinite group of units that are congruent to $1$ modulo the uniformizer, i.e. principal units, in a finite extension $L$ of $\Q_p$, we can just define a character $\mathbb{I}_{K}/K^\times\to \mathcal{O}_L[[\Gamma]]^\times$ by composing with the tautological embedding $\iota:\Gamma\hookrightarrow \mathcal{O}_L[[\Gamma]]^\times$. Now recall that we have $\overline{\alpha}:\mathbb{I}_{K}/K^\times\to (1+\pi)^{\Z_p}\subset \mathcal{O}_F^\times$ for some finite extension $F/\Q_p$, with $(1+\pi)^{p^e}=(1+p)$ for some positive integer $e$, and hence take $\mathcal{I}=\mathcal{O}_F[[\Gamma']]$ for $\Gamma'=(1+\pi)^{\Z_p}$. 
We thus define the character $\mathcal{A}:=\iota\circ \overline{\alpha}$, which is a continuous $\mathcal{I}$-adic character, by construction. 

\begin{proof}[Proof of the lemma]
We set
\[
\Xi:=\chi_{k_0}|\cdot|_K^{-k_0/2}\overline{\alpha}^{k_0}\mathcal{A}^{-1}.
\]
We have $P_{k}\circ \Xi=\chi_{k_0}|\cdot|_K^{-k_0/2}\overline{\alpha}^{k_0-k}$. From this formula, if $k=k_0$ we recover the character $\chi_{k_0}|\cdot|_K^{-k_0/2}$ we started with, and the infinity type of $\xi_k$ is $(0,-k_0)+(0,k_0-k)=(0,-k)$. Finally, the finite type of $\xi_k$ is just the product of the finite type of $\chi_{k_0}$ with the finite type of $\overline{\alpha}^{k_0-k}$, the latter being $\omega_{\overline{\mathfrak{p}}}^{k-k_0}$.
\end{proof}
\noindent As done in \cite[\S 5.2]{collins}, we can also specialize our $\Lambda$-adic Hecke character at weights $P_{k,\epsilon}=(k,\epsilon)$, for a finite order character $\epsilon:\Z_p^\times\to \overline{\Q}_p^\times$. By construction and Definition \ref{weigths} we get
\[
P_{k,\epsilon}\circ \Xi=\epsilon'\xi_k,
\]
where $\epsilon':=\epsilon \circ \alpha$. As we are interested in the case where $\chi_{k_0}$ is the unramified central character of $\mathcal{L}(f_0)$, for each $k$ the finite type of $\epsilon'\xi_k$ restricted to $\Z$ is $\epsilon'^{-1}\omega_{\overline{\mathfrak{p}}}^{k-k_0}$. Also, hereafter we suppose that the character $\epsilon$ factors through $1+p\Z_p$.
We can finally prove Proposition \ref{prop6}:

\begin{proof}[Proof of Proposition \ref{prop6}]
We recall that the Fourier-Jacobi coefficients of $\mathcal{L}^*(f)$ have been defined as
\[
g_{n/N(\mathfrak{a}),\mathfrak{a}}^*(w)=\kappa(\Omega_0)^{-1}\varepsilon(\overline{\mathfrak{a}}/\mathfrak{a})\sum_{\substack{\mathfrak{b}\in \text{Cl}_{K}\\ (\mathfrak{b},p)=1}}(\varepsilon^3\chi_\kappa^{-1})(\mathfrak{b})\kappa(\text{N}(\mathfrak{b}))^{1/2}g_{n/N(\mathfrak{a}),\mathfrak{a},\mathfrak{b}}^*(w),
\]
where
\[
g_{n/N(\mathfrak{a}),\mathfrak{a},\mathfrak{b}}^*(w)=T_{n}^*(f(\tau)\vartheta_{\overline{\mathfrak{b}}}(n\delta w,\tau))|_{\tau=-\overline{\tau}_{\mathfrak{a},\mathfrak{b}}},
\]
First of all, the Hecke equivariance follows from the continuity of the Hecke operators on the space of $p$-adic modular forms and by Theorem \ref{peigen}.

\noindent Take a sequence of modular forms $f_{i}\in S_{k_i-1}(\Gamma_1(p^rD),\omega_{K/\Q},\overline{\Z}_p)$ such that $f_{i}\to f$, $p$-adically. Up to multiplying by powers of $E_{p-1}$, the normalized Eisenstein series of weight $p-1$, we may suppose that the weights $k_i$ go to $\infty$ in the archimedean norm. Consider the sequence of lifts $\mathcal{L}(f_{i})\in \mathscr{S}(k_i,\Gamma_0^1(p^rD),\chi_{k_i},\overline{\Z}_p)$. As $p$ divides the level of the $f_i$, for $p\mid n$ the adjoint Hecke operator $T_n^*$ appearing in the $n$-th Fourier coefficient is continuous with respect to the $p$-adic topology on the space of $p$-adic modular forms. Finally, we see by the above lemma that there exists a $p$-adic analytic family of Hecke characters passing through the $\chi_{k_i}|\cdot|_K^{-k_i/2}$, so that, in particular, the limit exists and still defines a character in the same family.
\end{proof}

\noindent Let $\mathfrak{F}$ be the Hida family passing through the fixed $f_0\in S_{k_0-1}(\Gamma_1(D),\omega_{K/\Q},\overline{\Z}_p)$, as in the beginning of the section. We are in position to state and prove our main result:
\begin{Theorem}
\label{main}
For each $j\in \Z/p\Z$, the $\Lambda$-adic Fourier-Jacobi expansion $\mathfrak{L}_{j}(\mathfrak{F})$ introduced in \emph{(\ref{lambda})} defines a $\Lambda$-adic Picard modular form. More precisely, for each positive integer $k>\!\!>0$ with $k\equiv j\mod (p-1)$ and a finite character $\epsilon_{2r}:\Z_p^\times\to \overline{\Q}_p^\times$ of order $p^{2r}$ such that $$P_{k-1,\epsilon}\circ \mathfrak{F}=f_{k-1}\in S_{k-1}^{\emph{ord}}(\Gamma_1(p^{2r}D),\omega^{1-k}\epsilon_{2r}\omega_{K/\Q}),$$ we have
\[
\Omega_0^{-k}P_{k,\epsilon_{2r}}\circ\mathfrak{L}_{j}(\mathfrak{F})=\mathcal{L}^*_{k,\varepsilon,\omega_{\mathfrak{p}}^{j-1}\omega_{\overline{\mathfrak{p}}}^{k_0-1}\epsilon_{2r}\xi_k}(f_{k-1}).
\]
\end{Theorem}

\begin{proof}
The $\Lambda$-adic Fourier-Jacobi coefficients of $\mathfrak{L}_{j}(\mathfrak{F})$ have been defined by
\[
\mathfrak{g}_{n/\text{N}(\mathfrak{a}),\mathfrak{a}}^{j}(w)=\varepsilon(\overline{\mathfrak{a}}/\mathfrak{a})\sum_{\substack{\mathfrak{b}\in \text{Cl}_{K}\\ (\mathfrak{b},p)=1}}(\varepsilon^3\Xi^{-1}\omega_{\mathfrak{p}}^{j-1}\omega_{\overline{\mathfrak{p}}}^{k_0-1})(\mathfrak{b})\mathfrak{g}_{n/\text{N}(\mathfrak{a}),\mathfrak{a},\mathfrak{b}}(w),
\]
where
\[
\mathfrak{g}_{n/\text{N}(\mathfrak{a}),\mathfrak{a},\mathfrak{b}}(w)=T_{n}^*(\mathfrak{F}(\tau)\star\vartheta_{\overline{\mathfrak{b}}}(n\delta w,\tau))|_{\tau=-\overline{\tau}_{\mathfrak{a},\mathfrak{b}}},
\] 
By the definition of $\star$, we have
\[
P_{k,\epsilon}\circ\big(\mathfrak{F}(\tau)\star\vartheta_{\overline{\mathfrak{b}}}(n\delta w,\tau)\big)=\big(P_{k-1,\epsilon}\circ\mathfrak{F}(\tau)\big)\vartheta_{\overline{\mathfrak{b}}}(n\delta w,\tau)=f_{k-1}\vartheta_{\overline{\mathfrak{b}}}(n\delta w,\tau).
\]
The $T_{n}^*$ are well-defined on families, see \cite[Theorem 2.1.14]{ohta}. Finally, we apply Lemma \ref{hecke} to get $$P_{k,\epsilon}\circ\Xi^{-1}(\mathfrak{b})=(\epsilon_{2r}\xi_k)^{-1}(\mathfrak{b}),$$
which has finite type $\epsilon_{2r}\omega_{\overline{\mathfrak{p}}}^{k_0-j}$ according to our conventions on $p$-adic characters. Thus we see that, for $k\equiv j\mod (p-1)$, 
\[
(\epsilon_{2r}\xi_k)^{-1}\omega_{\mathfrak{p}}^{j-1}\omega_{\overline{\mathfrak{p}}}^{k_0-1}
\]
has finite type $\epsilon_{2r}\omega^{1-k}$, as desired.
\end{proof}

\section*{Acknowledgements}

I am deeply grateful to Professor Marc-Hubert Nicole for his uninterrupted guidance throughout the drafting of this work. I also wish to thank Maria Rosaria Pati for numerous discussions, Matteo Longo and Giovanni Rosso for their helpful comments, and finally the anonymous referee for the many suggestions which helped me improve the accuracy and legibility of this paper.


\begin{thebibliography}{9}

\bibitem[Bel02]{bellaiche}
Jo\"el Bella\"iche, Congruences endoscopiques et représentations galoisiennes, Ph.D. thesis, 2002.

\bibitem[Cog85]{cogdell}
James Wesley Cogdell,
Arithmetic cycles on Picard modular surfaces and modular forms of Nebentypus, \textit{Journal für die reine und angewandte Mathematik} 357 (1985), 115-137.

\bibitem[Col20]{collins}
Daniel J. Collins, Anticyclotomic $p$-adic $L$-functions and Ichino's formula, \textit{Annales mathématiques du Québec} 44 (2020), no.1, 27-89.

\bibitem[dSG16]{goren}
Ehud de Shalit, Eyal Zvi Goren,
A theta operator on Picard modular forms modulo an inert prime, \textit{Research in the Mathematical Sciences} 3 (2016), Paper No. 28, 65 pp.

\bibitem[Eic74]{eichler}
Martin Eichler,
Quadratische Formen and orthogonale Gruppen, \textit{Grundlehren der
mathematischen Wissenschaften}, Bond 63, Springer-Verlag, Berlin Heidelberg, New York, 1974.

\bibitem[F98]{finis2}
Tobias Finis, Some computational results on Hecke eigenvalues of modular forms on a unitary group, \textit{Manuscripta Mathematica} 96 (1998), 149-180.

\bibitem[F99]{finis}
Tobias Finis, Arithmetic properties of a theta lift from $\GU(2)$ to $\GU(3)$, PhD thesis, unpublished, 1999.

\bibitem[GeR91]{gelbart}
Stephen Gelbart, Jonathan David Rogawski, $L$-functions and Fourier-Jacobi coefficients for the unitary group $\U(3)$, \textit{Inventiones Mathematicae} 105 (1991), no.3, 445-472.

\bibitem[GS93]{greenberg}
Ralph Greenberg, Glenn Stevens, $p$-adic $L$-functions and $p$-adic periods of modular forms, \textit{Inventiones Mathematicae} 111 (1993), 407-447.

\bibitem[HLS05]{harris}
Michael Harris, Jian-Shu Li, Christopher McLean Skinner, The Rallis inner product formula and p-adic L-functions, in \emph{Automorphic Representations, L-Functions and Applications: Progress and Prospects: Proceedings of a conference honoring Steve Rallis on the occasion of his 60th birthday}, The Ohio State University, March 27-30, 2003, edited by James W. Cogdell, Dihua Jiang, Stephen S. Kudla, David Soudry and Robert J. Stanton, Berlin, New York: De Gruyter, 2005, pp. 225-256.
4

\bibitem[H86]{hidagl2}
Haruzo Hida, Galois representations into $\GL_2(\Z_p[[X]])$ attached to ordinary cusp
forms, \textit{Inventiones Mathematicae} 85 (1986), no.3, 545-613.

\bibitem[H93]{hida}
Haruzo Hida,
\emph{Elementary Theory of $L$-functions and Eisenstein Series}, \textit{London Mathematical Society Student Texts}, 26
Cambridge University Press, Cambridge, 1993. xii+386 pp.

\bibitem[H96]{hidaiwa}
Haruzo Hida, Iwasawa modules attached to congruences of cusp forms, \textit{Annales Scientifiques de l'École Normale Supérieure}, IV. Sér. 19, 231-273 (1986).

\bibitem[H04]{hidashimura}
Haruzo Hida,
\emph{$p$-adic automorphic forms on Shimura varieties}, Springer Monographs in Mathematics
Springer-Verlag, New York, 2004. xii+390 pp.

\bibitem[HZ76]{hirzebruch}
Friedrich Hirzebruch, Don Zagier, Intersection Numbers of Curves on Hilbert Modular Surfaces and Modular Forms of Nebentypus. \textit{Inventiones Mathematicae} 36 (1976): 57-114.

\bibitem[Hs14]{hsieh}
Ming--Lun Hsieh. "Eisenstein congruence on unitary groups and Iwasawa main conjectures for CM fields" \textit{Journal of the American Mathematical Society}, vol. 27, no. 3, 2014, pp. 753-862.

\bibitem[Ka73]{katz}
Nicolas Katz, $p$-adic properties of modular schemes and modular forms, in \emph{Modular functions of one variable, III} (Proc. Internat. Summer School, Univ. Antwerp, Antwerp, 1972), pp. 69-190
Lecture Notes in Math., Vol. 350
Springer-Verlag, Berlin-New York, 1973.

\bibitem[K78]{kudlaint}
Stephen S. Kudla, Intersection Numbers for Quotients of the Complex 2-Ball and Hilbert Modular Forms. \textit{Inventiones Mathematicae} 47 (1978): 189-208. 

\bibitem[K79]{kudlaq}
Stephen S. Kudla, On certain arithmetic automorphic forms for $\SU(1,q)$, Inventiones Mathematicae 52(1979), no.1, 1-25.

\bibitem[K81]{kudla1}
Stephen S. Kudla, On certain Euler products for $\SU(2,1)$, \textit{Compositio Mathematica} 42, no.3, 321-344.

\bibitem[KM90]{km}
Stephen S. Kudla, John James Milsson, Intersection numbers of cycles on locally symmetric spaces and Fourier
coefficients of holomorphic modular forms in several complex variables, \textit{Institut des Hautes Études Scientifiques, Publications Mathématiques}(1990), no. 71, 121-172.

\bibitem[Lan12]{lan}
Kai-Wen Lan, \emph{Arithmetic compactifications of PEL-type Shimura varieties}, London Mathematical Society Monographs, vol. 36, Princeton University Press, Princeton, 2013.

\bibitem[Lar88]{larsen1}
Michael Jeffrey Larsen, Unitary groups and $\ell$-adic representations, Ph.D. thesis, Princeton University, ProQuest LLC, Ann Arbor, 1988, 72 pp.

\bibitem[Lar92]{larsen2}
Michael Jeffrey Larsen, Arithmetic compactification of some Shimura surfaces, in \emph{The zeta functions of Picard modular surfaces}, 31-45, Université de Montréal, Centre de Recherches Mathématiques, Montreal, QC, 1992.

\bibitem[L19]{loeffler}
David Loeffler, Spherical varieties and norm relations in Iwasawa theory, \textit{Journal de théorie des nombres de Bordeaux}, Volume 33 (2021) no. 3.2, pp. 1021-1043. 

\bibitem[MS07]{murase}
Atsushi Murase, Takashi Sugano, On the Fourier-Jacobi expansion of the unitary Kudla lift, \textit{Compositio Mathematica} 143 (2007), no.1, 1-46.

\bibitem[Oh95]{ohta}
Masami Ohta, On the $p$-adic Eichler-Shimura isomorphism for $\Lambda$-adic cusp forms, \textit{Journal f\"ur die reine und angewandte Mathematik (Crelles Journal)} 463 (1995), 49-98. 

\bibitem[P82]{petersson}
Hans Petersson, Modulfunktionen und quadratische Formen,  \textit{Ergebnisse der Mathematik und ihrer Grenzgebiete}, 100, Springer-Verlag, Berlin, 1982, x+307 pp.

\bibitem[S64]{shimura3}
Goro Shimura, The arithmetic of unitary groups. \textit{Annals of Mathematics} 79 (1964), 369-409.

\bibitem[S76]{shimura2}
Goro Shimura, Theta functions with complex multiplication, \textit{Duke Mathematical Journal} J. 43 (1976), 673-696.

\bibitem[S78]{shimura}
Goro Shimura, The arithmetic of automorphic forms with respect to a unitary group, \textit{Annals of Mathematics} 107 (1978), no.3, 569-605.

\bibitem[TW83]{tongwang}
Yue Lin Lawrence Tong, Shu Ping Wang, Theta Functions Defined by Geodesic Cycles in Quotients of $\SU(p,1)$. \textit{Inventiones mathematicae} 71 (1983): 467-500.


\end{thebibliography}
\end{document}